\documentclass[a4paper,11pt,oneside,reqno]{amsart}
\usepackage[utf8]{inputenc}
\usepackage{amsmath,amsthm,amsfonts,latexsym,amssymb,bm,enumerate}
\usepackage{ae}
\usepackage{cite}
\usepackage{float}
\usepackage{lmodern}
\usepackage[T1]{fontenc}

\usepackage{color}

\usepackage[colorlinks=true]{hyperref}

\definecolor{dark-red}{rgb}{.54,.0,.0}
\definecolor{dark-green}{rgb}{.0,.4,.0}
\definecolor{dark-blue}{rgb}{.04,.04,.4}

\hypersetup{linkcolor=dark-red, urlcolor=dark-blue, citecolor=dark-green}

\newcommand{\nc}{\newcommand}
\newcommand{\Rb}{{\mathbb{R}}}
\renewcommand{\phi}{\varphi}
\nc{\txt}{\textstyle}
\nc{\be}{\begin{equation}}
\nc{\ee}{\end{equation}}
\nc{\ba}{\begin{eqnarray}}
\nc{\ea}{\end{eqnarray}}
\nc{\bas}{\begin{eqnarray*}}
\nc{\eas}{\end{eqnarray*}}
\nc{\weak}{\rightharpoonup}
\nc{\paq}{$({\cal{P}}_{\alpha,q})$}
\nc{\paz}{$({\cal{P}}_{\alpha,\tz})$}
\nc{\pazero}{$({\cal{P}}_{0,\tz})$}

\nc{\Om}{\Omega}
\nc{\ek}{{\eps_{k}}}
\nc{\ak}{{\alpha_{k}}}
\nc{\ck}{{C_{k}}}
\nc{\ct}{\tilde{C}}
\nc{\cc}{\hat{C}}
\nc{\uk}{{u_{k}}}
\nc{\Uk}{{U_{k}}}
\nc{\ts}{{2^*}}
\nc{\tsu}{{{2^*}-1}}
\nc{\tsd}{{{2^*}-2}}
\nc{\tst}{{{2^*}-3}}
\nc{\tz}{{2^\#}}
\nc{\tzu}{{{2^\#}-1}}
\nc{\tzd}{{{2^\#}-2}}
\nc{\wk}{{w_{k}}}
\nc{\sndd}{\frac{S^\frac N2}2}
\nc{\sddn}{\frac{S}{2^{\frac 2N}}}

\nc{\igukw}{{\int\nabla\Uk\cdot\nabla\wk}}
\nc{\iukw}{{\int U_k\wk}}
\nc{\iuks}{{|U_k|_{2^*}^{2^*}}}
\nc{\iukz}{{|U_k|_{2^\#}^{2^\#}}}
\nc{\iukpz}{{|u_k|_{2^\#}^{2^\#}}}
\nc{\iusu}{{\int\ U_k^{{2^*}-1} w_k}}
\nc{\iusd}{{\int\ U_k^{{2^*}-2} w_k^2}}
\nc{\iust}{{\int\ U_k^{{2^*}-3} w_k^3}}
\nc{\iuzu}{{\int\ U_k^{{2^\#}-1} |w_k|}}
\nc{\iuzd}{{\int\ U_k^{{2^\#}-2} w_k^2}}
\nc{\iuz}{{|u|_{\tz}^\tz}}
\nc{\ius}{{|u|_{\ts}^\ts}}

\nc{\nguk}{{|\nabla\Uk|_{2}}}
\nc{\ngukq}{{|\nabla\Uk|_{2}^2}}
\nc{\ngwkq}{{|\nabla\wk|_{2}^2}}
\nc{\nuk}{{|\Uk|_{2}}}
\nc{\nukq}{{|\Uk|_{2}^2}}
\nc{\nuks}{{|\Uk|_{\ts}}}
\nc{\nuksq}{{|\Uk|_{\ts}^2}}
\nc{\nukss}{{|\Uk|_{\ts}^\ts}}
\nc{\nuksmd}{{|\Uk|_{\ts}^{-2}}}
\nc{\nwk}{{||\wk||}}
\nc{\nwks}{{|\wk|_{\ts}}}
\nc{\nwksq}{{|\wk|_{\ts}^2}}
\nc{\ndwkq}{{|\wk|_{2}^2}}
\nc{\nwkq}{{||\wk||^2}}
\nc{\nwkr}{{||\wk||^r}}
\nc{\ngukpq}{{|\nabla u_{k}|_{2}^2}}
\nc{\ngupq}{{|\nabla u|_{2}^2}}
\nc{\nukps}{{|u_{k}|_{\ts}}}
\nc{\nukpss}{{|u_{k}|_{\ts}^\ts}}
\nc{\nups}{{|u|_{\ts}}}
\nc{\nupss}{{|u|_{\ts}^\ts}} 
\nc{\nupsz}{{|u|_{\ts}^\tz}} 
\nc{\nukpsq}{{|u_{k}|_{\ts}^2}}
\nc{\nupsq}{{|u|_{\ts}^2}}
\nc{\nupq}{{|u|_{2}^2}}
\nc{\nukpq}{{|u_k|_{2}^2}}

\nc{\ql}{\frac{\ngukq}{|\Uk|_{\ts}^{2^*}}}
\nc{\beuk}{\frac{\ngukq+a\nukq}{\nuksq}}

\nc{\nhu}{||u||^2}
\nc{\pnhu}{||u||^2}
\nc{\pnhumeio}{||u||}
\nc{\nhukpq}{||u_{k}||^2}
\nc{\pnhukpq}{||u_{k}||^2}
\nc{\pnhukpqmeio}{||u_{k}||}
\nc{\nhukq}{||U_{k}||^2}
\nc{\pnhukq}{||U_{k}||^2}
\nc{\nhwkq}{||w_{k}||^2}
\nc{\pnhwkq}{||w_{k}||^2}

\nc{\ttt}{{\Rb^{N}_{+}}}
\nc{\ue}{U_{\eps}}
\nc{\pb}{\bar{\phi}_{\eps}}
\nc{\pbq}{\bar{\phi}_{\eps}^2}
\nc{\pk}{{\phi}_{\eps}}
\nc{\uz}{U^\tzd_{\eps}}
\nc{\uzz}{U^\tzd_{\eps,y_{\eps}}}
\nc{\us}{U^\tsd_{\eps}}
\nc{\uss}{U^\tsd_{\eps,y_{\eps}}}
\nc{\et}{\eta_{\eps}}
\nc{\etq}{\eta_{\eps}^2}
\nc{\pt}{{\tilde\phi}}
\nc{\pte}{{\tilde{\phi}_{\eps}}}
\nc{\ptq}{{{\tilde\phi}^2}}
\nc{\pteq}{\tilde{\phi}_{\eps}^2}
\nc{\ome}{{\Om_{\eps}}}

\nc{\mk}{M_{k}}
\nc{\dk}{\delta_{k}}
\nc{\pkk}{P_{k}}
\nc{\ndd}{{\frac{N-2}{2}}}
\nc{\ium}{U_{\delta_{k},P_{k}}}
\nc{\vk}{{v_{k}}}

\nc{\gb}{\gamma+\sqrt{\gamma^2+4\beta}}
\nc{\gbb}{\gamma_{\infty}+\sqrt{\gamma_{\infty}^2+4\beta_{\infty}}}
\nc{\ia}{\frac{1}{4(\tz)^{\frac{2}{N}}}
         \left[\left(\gb\right)^{N}+2\cdot\ts\beta\left(\gb\right)^{N-2}
         \right]^{\frac{2}{N}}}
\nc{\iab}{\frac{1}{4(\tz)^{\frac{2}{N}}}
         \left[\left(\gbb\right)^{N}+2\cdot\ts\beta_{\infty}\left(\gbb\right)^{N-2}
         \right]^{\frac{2}{N}}}

\nc{\aaa}{\mbox{{\sl a}}}

\nc{\bg}{\mbox{\b{$\gamma$}}}
\nc{\bb}{\mbox{\b{$\beta$}}}
\nc{\zs}{{\frac{\tz}{\ts}}}
\nc{\sz}{{\frac{\ts}{\tz}}}
\nc{\ds}{{\frac{2}{\ts}}}
\nc{\gx}{\bg x^{\zs}}
\nc{\sd}{S_{2}}
\nc{\gxq}{\bg^2 x^{2\zs}}

\nc{\az}{S^{\frac N2}\frac{A(N)}{B(N)}\max_{\partial\Om}H}
\nc{\azz}{A(N)\max_{\partial\Om}H}
\nc{\bd}{\mbox{\b{$\delta$}}}

\nc{\dd}{\frac{1}{2}\alpha\delta}
\nc{\tzud}{\frac{\tz+1}{2}}
\nc{\tzdois}{\frac{\tz}{2}}
\nc{\lb}{\left(}
\nc{\rb}{\right)}
\nc{\defdel}{\frac{\iuz}{\pnhumeio\cdot|u|_{\ts}^{\ts\!/2}}}
\nc{\intum}{\int(\nabla u\cdot\nabla\phi+au\phi)}
\nc{\intdois}{\int u^\tzu\phi}
\nc{\inttres}{\int u^{\tsu}\phi}
\nc{\cs}{{\cal{S}}}
\nc{\sddnd}{S/2^{\frac{2}{N}}}

\newcounter{um}

\def\Nb{{\rm I\kern-.23em N}}
\def\Cb{{\rm\kern.24em
        \vrule width.02em height1.4ex depth-.05ex
        \kern-.26em C}}
\newcommand{\eps}{\varepsilon}

\newtheorem{Thm}{Theorem}[section]
\newtheorem{lemma}[Thm]{Lemma}
\newtheorem{corollary}[Thm]{Corollary}
\newtheorem{proposition}[Thm]{Proposition}

\newtheorem{remark}[Thm]{Remark}

\newcommand{\cal}{\mathcal}

\begin{document}
    
\subjclass[2000]{46E35, 35J65}

\title[Critical Neumann problem for semilinear equation]{Existence and nonexistence of
    least energy solutions of the Neumann problem for a 
    semilinear elliptic equation with critical Sobolev exponent
    and a critical lower-order perturbation}

\author{David G.\ Costa}

\address{Department of Mathematical Sciences, University of Nevada at 
    Las Vegas\\ Las Vegas, Nevada 89154-4020, USA}

\email{costa@nevada.edu}

\author{Pedro M.\ Gir\~{a}o}

\address{Mathematics Department, Instituto Superior T\'{e}cnico, Av.\ 
    Rovisco Pais, 1049-001 Lisbon, Portugal}

\email{pgirao@math.ist.utl.pt}
      
\thanks{Pedro M.\ Gir\~{a}o was partially supported
    by FCT (Portugal).}

\keywords{Neumann problem, critical Sobolev exponent, 
least energy solutions,
inequalities}


\begin{abstract}
Let $\Om$ be a smooth bounded domain in 
         $\Rb^{N}$, with $N\geq 5$, 
         $a>0$, $\alpha\geq 0$ and $\ts=\frac{2N}{N-2}$. 
         We show that the the 
         exponent $q=\frac{2(N-1)}{N-2}$ plays a critical role 
         regarding the existence of least energy (or ground state) 
         solutions of the Neumann problem
         $$
            \left\{\begin{array}{ll}
            -\Delta u+au=u^\tsu-\alpha u^{q-1}&\mbox{in\ }\Om,\\
            u>0&\mbox{in\ }\Om,\\
            \frac{\partial u}{\partial\nu}=0&\mbox{on\ }\partial\Om.
            \end{array}\right.
        $$
        Namely,
        we prove 
        that when $q=\frac{2(N-1)}{N-2}$
        there exists an $\alpha_{0}>0$
        such that the problem has a least energy 
        solution if $\alpha<\alpha_{0}$ and has
        no least energy solution
        if $\alpha>\alpha_{0}$.
\end{abstract}

\maketitle

\section{Introduction}
  
Let $\Om$ be a smooth bounded domain in $\Rb^{N}$, with $N\geq 5$, 
$a>0$ and $\alpha\geq 0$.  Let $\ts=
\frac{2N}{N-2}$ be the critical exponent for the Sobolev embedding
$H^1(\Om)\subset L^q(\Om)$ and $\tz=\frac{2(N-1)}{N-2}$.
We consider the problem
$$
\left\{\begin{array}{ll}
-\Delta u+au=u^\tsu-\alpha u^{q-1}&\mbox{in\ }\Om,\\
u>0&\mbox{in\ }\Om,\\
\frac{\partial u}{\partial\nu}=0&\mbox{on\ }\partial\Om.
\end{array}\right.\eqno{({\cal{P}}_{\alpha,q})}
$$
We regard $a$ as fixed and $\alpha$ as a parameter.
From Theorem 3.2 of \cite{WX}, due to X.J.\ Wang,
we know that if
$2<q<\tz$, then problem \paq\ has a {\em least energy}\/ solution
for all values of $\alpha\geq 0$.
(Wang's result actually holds for $N\geq 3$.)
A question that naturally arises is the following:  
what happens for $q=\tz$?

It is well known that the solutions of \paq\ correspond to 
critical points of 
the functional $\Phi_{\alpha}:H^1(\Om)\to\Rb$, defined by
$$
\Phi_{\alpha}(u):=\frac 
12\ngupq+\frac{a}{2}\nupq+\frac{\alpha}{q}|u|_{q}^q-\frac 1\ts\ius,
$$
where $|u|_{p}$ denotes the $L^p$ norm of $u$ in $\Om$.  We recall that a 
{\em least energy}\/ solution is a function 
$u\in H^1(\Om)$ such that
$$
\Phi_{\alpha}(u)=\inf_{{\cal{N}}}\Phi_{\alpha}.
$$
The set ${\cal{N}}$ is the Nehari manifold,
${\cal{N}}:=\{u\in H^1(\Om):\Phi_{\alpha}^\prime(u)u=0,$ $u\neq 0\}$.
It is interesting to note that when $q=\tz$ it is possible to determine 
explicitly the function $\Phi_{\alpha}|_{{\cal{N}}}$ by 
solving a quadratic equation. 
We take full advantage of 
this fact.  

We recall that the 
infimum
$$
S:=\inf\left\{\left.
\frac{\int_{\Rb^{N}}|\nabla 
u|^2}{\lb\int_{\Rb^{N}}|u|^\ts\rb^{2/\ts}}
\right|u\in
L^\ts(\Rb^{N}), \nabla u\in L^2(\Rb^{N}),
u\neq 0\right\}
$$
is achieved by the Talenti instanton
$
U(x):=\left(\frac{N(N-2)}{N(N-2)+|x|^2}\right)^{\frac{N-2}{2}}
$.
For $\eps>0$ and $y\in\Rb^{N}$, we define 
$U_{\eps,y}:=\eps^{-\frac{N-2}{2}}U\left(\frac{x-y}{\eps}\right)$. 

Heuristically, we can summarize the main idea behind the analysis of 
problem \paq, when $q=\tz$, as follows.  
There exists an $\alpha_{0}\in]0,+\infty]$ such that
$\inf_{{\cal{N}}}\Phi_{\alpha}<\frac{S^\frac{N}{2}}{2N}$ for 
$\alpha<\alpha_{0}$, and 
$\inf_{{\cal{N}}}\Phi_{\alpha}=\frac{S^\frac{N}{2}}{2N}$
for $\alpha\geq\alpha_{0}$.  If $\alpha<\alpha_{0}$, then \paz\ 
has a least energy solution whereas, if $\alpha>\alpha_{0}$, then \paz\ 
does not have a least energy solution.  
Suppose
$\alpha_{0}=+\infty$, 
so that there exist least energy solutions for all $\alpha\geq 0$.  We
choose a sequence $\ak\to+\infty$ as $k\to+\infty$ and denote by $\uk$ a 
corresponding sequence of least energy solutions.  Then
there would exist a sequence 
of positive numbers $\eps_{k}$ converging to zero, and a sequence of points 
$P_{k}\in\partial\Om$, such
that, modulo a subsequence, $P_{k}\to P$ and
$|\nabla(\uk-U_{\eps_{k},P_{k}})|_{2}\to 0$, as $k\to+\infty$.  
We can use $\Phi_{\ak}(U_{\eps_{k},P_{k}})$ to 
estimate $\Phi_{\ak}(\uk)$ from below with an error that is 
$o(\ak\eps_{k})$.  However, from Adimurthi and Mancini~\cite{AM} and 
X.J.\ Wang~\cite{WX}, we have the estimate
$$
\Phi_{\ak}(U_{\eps_{k},P_{k}})=\frac{S^\frac{N}{2}}{2N}-
\frac{S^\frac{N}{2}}{2}H(P_{k})A(N)\eps_{k}+
\frac{1}{2}B(N)\ak\eps_{k}+o(\ak\ek),
$$
where $A(N)$ and $B(N)$ are positive constants that only depend on 
$N$, and $H(P_{k})$ is the 
mean curvature of $\partial\Om$ at $P_{k}$
with respect to the unit outward normal.  
This lower bound is 
greater than $\frac{S^\frac{N}{2}}{2N}$, for large $k$.  
This contradicts the hypothesis that $\alpha_{0}=+\infty$.

It is somewhat delicate to
justify the use of $\Phi_{\ak}(U_{\eps_{k},P_{k}})$ to 
estimate $\Phi_{\ak}(\uk)$ from below.  
This was first done 
by Adimurthi, Pacella and Yadava in~\cite{APY}, who 
treated the case where $\alpha=0$.  The argument 
involves an expansion to second order of the energy at 
$U_{\eps_{k},P_{k}}$ and a comparison of 
the eigenvalues of the linearized problem at $U_{\eps_{k},P_{k}}$ with 
the eigenvalues of a limiting problem.

The present analysis builds on the work~\cite{APY} of 
Adimurthi, Pacella and 
Yadava, which we will frequently
refer to as [APY].
Of course, the works of Talenti~\cite{T},
Br\'{e}zis and Nirenberg~\cite{BN}, P.L.\ 
Lions~\cite{L1}, Adimurthi and Mancini~\cite{AM}, and X.J.\ 
Wang~\cite{WX}
are also of major importance for our study.

Our main result is the following

\vspace{\baselineskip}

{\sc Theorem.}
{\it
Let $\Om$ be a smooth 
bounded domain in $\Rb^{N}$, with $N\geq 5$, 
$a>0$ and $\alpha\geq 0$.  There exists a positive real number
$\alpha_{0}=\alpha_{0}(a,\Om)$ such that
\begin{enumerate}
    \item[(i)]\ if $\alpha<\alpha_{0}$, then problem \paz\
    has a least energy solution;
    \item[(ii)]\ if 
    $\alpha>\alpha_{0}$, then problem \paz\ has no least energy 
    solution.
    \end{enumerate}}
We remark that 
this result contrasts with Theorem 3.2 of \cite{WX}, referred to above.
Also, from this theorem we deduce an inequality (see (\ref{energia})) which 
implies Aubin's inequality (\ref{cherrier}) (see \cite{Aubin} and
Cherrier~\cite{Ch}). 

We should mention that for any pair $(a,\alpha)$ (with $a>0$ and $\alpha\geq 
0$) problem \paz\ has the constant solution 
$$\textstyle u=\kappa:=\lb\frac{\alpha+
\sqrt{\alpha^2+4a}}{2}\rb^\frac{N-2}{2}.$$
The energy of this solution is 
$$\textstyle\Phi_{\alpha}(\kappa)=
\frac{|\Om|}{\tz N}\left[
\lb\frac{\alpha+\sqrt{\alpha^2+4a}}{2}\rb^{N}+
\frac{\ts}{2\,}a\lb\frac{\alpha+\sqrt{\alpha^2+4a}}{2}\rb^{N-2}
\right],$$
where $|\Om|$ denotes the $N$-dimensional Lebesgue measure of $\Om$.
It follows that for $a>0$ and $\alpha\geq 0$ sufficiently small, namely
for $a\leq {S}/{(2|\Om|)^\frac{2}{N}}$ and $\alpha$ such that
$\Phi_{\alpha}(\kappa)\leq{S^\frac{N}{2}}/{(2N)}$,
then the least energy solutions might be constant.

When the domain $\Om$ is a ball
and $a$ is small, Adimurthi and Yadava~\cite{AY2} proved
that \pazero\ has more than one solution for $N=$ 4, 5 and 6.
However, when  $N=3$ a uniqueness result was proved 
by M.\ Zhu in \cite{Zh2} for
convex domains, $\alpha=0$ and small $a$.

Other works in the spirit of ours are those of 
Br\'{e}zis and Lieb~\cite{BL}, Adimur\-thi and 
Yadava~\cite{AY1}, M.\ Zhu~\cite{Zh}, Z.Q.\ Wang~\cite{W2} and 
Chabrowski and Willem~\cite{CW}.

The organization of this work is as follows.
In Section~2 we give the setup of our work and the 
statement of the main result.
In Section~3 we prove existence of least energy solutions.  
We then assume that the value $\alpha_{0}$ is infinite 
and analyze the asymptotic 
behavior of the least energy solutions as $\alpha\to+\infty$.
In Section~4 we prove nonexistence of least energy solutions.
In Section~5 we give a lower bound for 
    $\alpha_{0}$
    and, using the ideas of Chabrowski and Willem~\cite{CW},
    give partial results concerning existence of 
    least energy solutions for $\alpha=\alpha_{0}$.
In Appendix A we check that the Nehari set 
${\cal{N}}$ is a manifold and a natural constraint for 
$\Phi_{\alpha}$, we derive expressions for $\Phi_{\alpha}|_{{\cal{N}}}$, 
and we derive upper and lower bounds 
for $\Phi_{\alpha}|_{{\cal{N}}}$.
Finally,
in Appendix B we prove a technical estimate, used in our study,
similar to those in 
Adimurthi and Mancini~\cite{AM}.

Motivated by this work, in~\cite{Girao} the second author 
has proved an inequality which improves inequality (\ref{energia}).
In~\cite{PG} he proves a family of inequalities which contains,
as special cases, an inequality in Zhu's work~\cite{Zh} and the 
inequality in~\cite{Girao}.

\section{The setup and statement of the main result}

Let $\Om$ be a smooth bounded domain in $\Rb^{N}$, with $N\geq 5$.  
Let $\ts=
\frac{2N}{N-2}$ be the critical exponent for the Sobolev embedding
$H^1(\Om)\subset L^q(\Om)$ and $\tz=\frac{2(N-1)}{N-2}$.  Finally, 
let
$a>0$ and $\alpha\geq 0$.  We are concerned with the problem of
existence of a {\em least energy\/} solution of
\addtocounter{equation}{+1}
\setcounter{um}{\theequation}
$$
\left\{\begin{array}{ll}
-\Delta u+au=u^\tsu-\alpha u^\tzu&\mbox{in\ }\Om,\\
u>0&\mbox{in\ }\Om,\\
\frac{\partial u}{\partial\nu}=0&\mbox{on\ }\partial\Om.
\end{array}\right.\eqno{(\theequation_{\alpha})}
$$

\nc{\rf}{$(\theum_{\alpha})$}
\nc{\rfk}{$(\theum_{\alpha_{k}})$}
\nc{\rfz}{$(\theum_{\alpha_{0}})$}

Solutions of \rf\ correspond to critical points of 
the functional $\Phi_{\alpha}:H^1(\Om)\to\Rb$ defined by
\be\label{phi}
\Phi_{\alpha}(u):=\frac 12\pnhu+\frac\alpha\tz\iuz-\frac 1\ts\ius.
\ee
We use the notations 
$$|u|_{p}:=\left(\textstyle\int|u|^p\right)^\frac{1}{p}\qquad
\mbox{and}\qquad
||u||:=\left(\ngupq+a\nupq\right)^\frac{1}{2}.$$
Unless otherwise indicated,  
integrals are over $\Om$. 

We recall that the Nehari manifold is
$${\cal{N}}:=\left\{u\in H^1(\Om):\Phi_{\alpha}^\prime(u)u=0, 
u\neq 0\right\}.$$
For any $u\in H^1(\Om)\setminus\{0\}$, there exists a 
unique $t(u)>0$ such that 
$t(u)u\in{\cal{N}}$; the value of $t(u)$ is given in 
expression (\ref{t}) of Appendix A.  
We define 
$\Psi_{\alpha}:H^1(\Om)\setminus\{0\}\to\Rb$ by
$$\Psi_{\alpha}(u):=\Phi_{\alpha}(t(u)u).$$
As can be checked in Appendix B,
\be\label{psiu}
\!\!\!\!\!\!\!\!\Psi_{\alpha}:=\frac 1N\frac 1\tz\frac {1}{2^{N}}
         \left[\left(\gb\right)^{N}
         \!\!+2\cdot\ts\beta\left(\gb\right)^{N-2}
         \right]\!,
\ee
where $\beta$, $\gamma:H^1(\Om)\setminus\{0\}\to\Rb$ are defined by
\be\label{defb}\beta(u):=\frac\nhu\nupsq\ee and
\be\label{defg}\gamma(u)=\gamma_{\alpha}(u):=
\alpha\frac\iuz\nupsz.\ee
Equivalently,
\be\label{psid}
\Psi_{\alpha}=\frac 1N\frac{\beta^\frac N2}\tz
         \left[\left(\delta+\sqrt{\delta^2+1}\right)^{N}+
         \frac\ts{2\;}\left(\delta+\sqrt{\delta^2+1}\right)^{N-2}
         \right],
\ee
with $\beta$ as above and 
$\delta:H^1(\Om)\setminus\{0\}\to\Rb$ defined by
\be\label{delta}
\delta(u)=\delta_{\alpha}(u):=\frac{\gamma(u)}{2\sqrt{\beta(u)}}=
\frac 12\frac{\alpha\iuz}{\pnhumeio\cdot|u|_{\ts}^\frac\ts 
2},
\ee
Obviously, every nonzero critical point of $\Phi_{\alpha}$ is a 
critical point of $\Psi_{\alpha}$.  Since the Nehari manifold is a 
natural constraint for $\Phi_{\alpha}$, if $u$ is a critical point of
$\Psi_{\alpha}$, then 
$t(u)u$ is a critical point of
$\Phi_{\alpha}$.

As is usual, we say that $u\neq 0$ is a {\em ground state\/} 
critical point of $\Phi_{\alpha}$, or a {\em least 
energy\/} solution of 
\rf, if
$$
\Phi_{\alpha}(u)=\inf_{{\cal{N}}}
\Phi_{\alpha}=\inf_{H^1(\Om)\setminus\{0\}}\Psi_{\alpha}.
$$

Our aim is to establish existence and nonexistence of least energy 
solutions of \rf.  We will consider the minimization 
problem corresponding to
$$
S_{\alpha}:=\inf\left\{I_{\alpha}(u)|u\in 
H^1(\Om)\setminus\{0\}\right\},
$$
where $I_{\alpha}:H^1(\Om)\setminus\{0\}\to\Rb$ is defined by
\be\label{defi}
I_{\alpha}:=\left(N\Psi_{\alpha}\right)^\frac 2N.
\ee

From (\ref{psiu}) and (\ref{psid}) we obtain
\be\label{energiad}
I_{\alpha}=\ia
\ee
and
\be\label{iau}
I_{\alpha}=\frac{\beta}{\left(\tz\right)^\frac 2N}
         \left[\left(\delta+\sqrt{\delta^2+1}\right)^{N}+
         \frac\ts{2\;}\left(\delta+\sqrt{\delta^2+1}\right)^{N-2}
         \right]^\frac 2N.
\ee

We observe that 
\be\label{beta}
I_{\alpha}\geq\beta,
\ee
since
$$\frac 1\tz\left(1+\frac\ts{2\;}\right)=1.$$

Before stating our main result, we recall that the 
infimum
$$
S:=\inf\left\{\left.
\frac{\int_{\Rb^{N}}|\nabla 
u|^2}{\lb\int_{\Rb^{N}}|u|^\ts\rb^{2/\ts}}\right|u\in
L^\ts(\Rb^{N}), \nabla u\in L^2(\Rb^{N}),
u\neq 0\right\},
$$
which depends on $N$, is achieved by the Talenti instanton
$$
U(x):=\left(\frac{N(N-2)}{N(N-2)+|x|^2}\right)^{\frac{N-2}{2}}.
$$
This instanton $U$ satisfies 
\begin{equation}\label{inst}
-\Delta U=U^{\ts-1},
\end{equation} so that
\begin{equation}\label{insta}\int_{\Rb^{N}}
    |\nabla U|^2=\int_{\Rb^{N}} U^\ts=S^{\frac{N}{2}}.
\end{equation}
Let $\eps>0$ and $y\in\Rb^{N}$.  For later use, we 
define the rescaled instanton
\be
U_{\eps,y}:=\eps^{-\frac{N-2}{2}}U\left(
\frac{x-y}{\eps}\right),\label{resins}
\ee
which also satisfies (\ref{inst}) and (\ref{insta}).

Our main result is
\begin{Thm}\label{theorem}
    Let $\Om$ be a smooth bounded domain in $\Rb^{N}$, with $N\geq 5$, 
$a>0$ and $\alpha\geq 0$.  There exists a positive real number
$\alpha_{0}=\alpha_{0}(a,\Om)$ such that
\begin{enumerate}
    \item[(i)]\ if $\alpha<\alpha_{0}$, then
    \rf\ has a least energy solution;
    \item[(ii)]\ if 
    $\alpha>\alpha_{0}$, then \rf\ does not have a least energy 
    solution and
    \be\label{energia}
    \sddn\leq
    \frac{\beta}{\left(\tz\right)^\frac 2N}
         \left[\left(\delta+\sqrt{\delta^2+1}\right)^{N}+
         \frac\ts{2\;}\left(\delta+\sqrt{\delta^2+1}\right)^{N-2}
         \right]^\frac 2N
\ee
in $H^1(\Om)\setminus\{0\}$,
where $\beta$ and $\delta=\delta_{\alpha}$ are defined in\/~{\rm
(\ref{defb})}
and\/~{\rm (\ref{delta})}, respectively.  The constant on the 
left hand side of\/~{\rm (\ref{energia})} is sharp.
\end{enumerate}
\end{Thm}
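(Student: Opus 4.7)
The plan is to define
\begin{equation*}
\alpha_{0} := \sup\{\alpha\geq 0 : S_{\alpha} < \sddn\},
\end{equation*}
and then to show that $0<\alpha_{0}<\infty$. Formula \eqref{iau} combined with \eqref{delta} shows $\delta_{\alpha}=\alpha\,\delta_{1}$, so $\alpha\mapsto I_{\alpha}(u)$ is non-decreasing for every fixed $u\in H^{1}(\Om)\setminus\{0\}$, and hence so is $\alpha\mapsto S_{\alpha}$. The trivial upper bound $S_{\alpha}\leq \sddn$ for every $\alpha\geq 0$ follows from testing $I_{\alpha}$ on rescaled instantons $U_{\eps,P}$ centered at a boundary point $P\in\partial\Om$ and letting $\eps\downarrow 0$, since in the limit half of the instanton mass lies in $\Om$. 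To obtain $\alpha_{0}>0$, I would use the expansion recalled in the introduction,
\begin{equation*}
\Phi_{0}(U_{\eps,P}) = \frac{S^{N/2}}{2N} - \frac{S^{N/2}}{2}A(N)H(P)\eps + o(\eps),
\end{equation*}
and pick $P\in\partial\Om$ with $H(P)>0$ (which exists on every smooth bounded domain) together with sufficiently small $\eps$, producing $S_{0}<\sddn$.

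For part (i), fix $\alpha<\alpha_{0}$, so $S_{\alpha}<\sddn$. I would take a minimizing sequence $u_{k}$ for $I_{\alpha}$, replace each $u_{k}$ by $|u_{k}|$ (which does not increase $\Psi_{\alpha}$), and apply Ekeland's principle on the Nehari manifold to obtain a bounded Palais--Smale sequence for $\Phi_{\alpha}|_{{\cal{N}}}$ at level $\frac{1}{N}S_{\alpha}^{N/2}<S^{N/2}/(2N)$. Below this threshold --- which is the energy cost of a single boundary bubble --- the standard Br\'ezis--Nirenberg / P.L.\ Lions concentration-compactness dichotomy excludes vanishing and dichotomy, so some subsequence converges strongly to a nontrivial limit $u\in H^{1}(\Om)$. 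The strong maximum principle then gives $u>0$, and $t(u)u$ is a least energy solution of \rf.

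For part (ii), the heart of the argument is proving $\alpha_{0}<\infty$. Suppose otherwise: there exist least energy solutions $u_{k}$ for a sequence $\ak\to+\infty$. Monotonicity combined with the upper bound forces $S_{\ak}\to \sddn$, so $\Phi_{\ak}(u_{k})\to S^{N/2}/(2N)$. A profile decomposition then produces $\ek\to 0^{+}$ and $P_{k}\to P\in\partial\Om$ with $|\nabla(u_{k}-U_{\ek,P_{k}})|_{2}\to 0$; concentration is forced to the boundary since an interior bubble would cost $S^{N/2}/N$. Combining a quadratic Taylor expansion of $\Phi_{\ak}$ at $U_{\ek,P_{k}}$ with a spectral comparison between the linearization at $U_{\ek,P_{k}}$ and its limit on $\ttt$, as in [APY], yields
\begin{equation*}
\Phi_{\ak}(u_{k}) \geq \Phi_{\ak}(U_{\ek,P_{k}}) + o(\ak\ek),
\end{equation*}
and the Adimurthi--Mancini--Wang expansion gives
\begin{equation*}
\Phi_{\ak}(U_{\ek,P_{k}}) = \frac{S^{N/2}}{2N} - \frac{S^{N/2}}{2}A(N)H(P_{k})\ek + \frac{1}{2}B(N)\ak\ek + o(\ak\ek).
\end{equation*}
Since $\ak\to\infty$ while $H(P_{k})$ stays bounded, the term $\frac{1}{2}B(N)\ak\ek$ eventually dominates, producing $\Phi_{\ak}(u_{k})>S^{N/2}/(2N)$ for large $k$, in contradiction with $\Phi_{\ak}(u_{k})<S^{N/2}/(2N)$. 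Hence $\alpha_{0}<\infty$. For $\alpha>\alpha_{0}$, monotonicity plus the trivial upper bound give $S_{\alpha}=\sddn$; a minimizer would realize this limiting Sobolev-type infimum on a bounded domain, which a standard blow-up analysis rules out.

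Inequality \eqref{energia} is then just \eqref{iau} rewritten as $I_{\alpha}\geq\sddn$, which we have just established for $\alpha>\alpha_{0}$, and sharpness follows from the fact that $\sddn$ is approached (never attained) by $I_{\alpha}(U_{\eps,P})$ as $\eps\to 0^{+}$ with $P\in\partial\Om$. The main obstacle throughout is the error estimate $\Phi_{\ak}(u_{k})-\Phi_{\ak}(U_{\ek,P_{k}})=o(\ak\ek)$ in the nonexistence step: this demands both the preparatory profile decomposition placing $u_{k}$ within $H^{1}$-distance $o(1)$ of an instanton, and an APY-style second-variation analysis through a delicate eigenvalue comparison with the limiting problem on $\ttt$.
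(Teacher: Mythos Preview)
Your overall strategy matches the paper's: define $\alpha_0$ via the threshold $\sddn$, prove existence below it by compactness, and prove $\alpha_0<\infty$ by contradiction using an APY-type second-variation estimate combined with the Adimurthi--Mancini expansion. Two points deserve comment.

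For part (i) the paper takes a different technical route from your Ekeland/Palais--Smale argument. It works directly with the homogeneous functional $I_\alpha$ and a minimizing sequence normalized by $|u_k|_{\ts}=1$; after concentration-compactness, the limiting value of $I_\alpha$ becomes an explicit function $h(x)$ of $x=|u|_{\ts}^{\ts}\in[0,1]$, and a calculus computation shows $h$ has no interior minimum, forcing either $u=0$ (excluded since then $\beta_\infty\geq\sddn$) or $\|\nu\|=0$ (strong convergence). This exploits the quadratic solvability of the Nehari constraint when $q=\tz$, which is the paper's organizing theme. Your approach should also work, but note that ruling out vanishing implicitly uses Cherrier's inequality \eqref{cherrier}; since the paper derives \eqref{cherrier} \emph{a posteriori} from the theorem, you should be explicit that you are invoking it as an independent known result to avoid circularity.

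There are two genuine gaps in your outline. First, the claim that monotonicity plus the upper bound force $S_{\ak}\to\sddn$ is incomplete: monotonicity only gives convergence to some $L\leq\sddn$. The paper obtains $L=\sddn$ by first showing $u_k\rightharpoonup 0$ (from boundedness of $\alpha_k|u_k|_{\tz}^{\tz}$ while $\alpha_k\to\infty$, via the Nehari identity) and then using $I_\alpha\geq\beta$ together with concentration-compactness. Second, and more importantly, your argument that for $\alpha>\alpha_0$ ``a minimizer would realize this limiting Sobolev-type infimum \dots\ which a standard blow-up analysis rules out'' is not the right mechanism: a single minimizer is not a sequence, and there is no blow-up to analyze. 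The correct (and much simpler) argument, used implicitly in the paper's Corollary after Lemma~\ref{cont}, is strict monotonicity in $\alpha$: if $I_\alpha(u)=\sddn$ for some $u\neq 0$ and $\alpha>\alpha_0$, then for any $\alpha'\in(\alpha_0,\alpha)$ one has $\delta_{\alpha'}(u)<\delta_\alpha(u)$ and hence $S_{\alpha'}\leq I_{\alpha'}(u)<I_\alpha(u)=\sddn$, contradicting $S_{\alpha'}=\sddn$.
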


\begin{corollary}[Aubin's inequality]
Let $\Om$ be a smooth bounded domain in $\Rb^{N}$, with $N\geq 5$.  For every
$\varsigma>0$, there exists a $C(\varsigma,\Om)>0$ such that 
\be\label{cherrier}
\sddn-\varsigma\leq\frac{\ngupq+C(\varsigma,\Om)|u|_2^2}{\nupsq},
\ee
for all $u\in H^1(\Om)\setminus\{0\}$.
\end{corollary}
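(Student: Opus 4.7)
The plan is to derive \eqref{cherrier} from the sharp inequality \eqref{energia} by controlling its $\delta$-dependent factor uniformly in $u$. Denote
\[
F(\delta):=\frac{1}{(\tz)^{2/N}}\left[(\delta+\sqrt{\delta^2+1})^{N}+\frac{\ts}{2}(\delta+\sqrt{\delta^2+1})^{N-2}\right]^{2/N};
\]
then $F$ is continuous, strictly increasing on $[0,\infty)$, and, thanks to the identity $1+\ts/2=\tz$, satisfies $F(0)=1$. Inequality \eqref{energia} thus rearranges to $\beta(u)\ge \sddnd/F(\delta_\alpha(u))$, so \eqref{cherrier} will follow once $\delta_\alpha(u)$ is bounded uniformly in $u$ by a quantity that can be made arbitrarily small.

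The key estimate is an interpolation tailored to the arithmetic $\tz=1+\ts/2$. H\"older's inequality with exponents $(2,2)$ gives
\[
\int|u|^{\tz}\;=\;\int|u|\cdot|u|^{\ts/2}\;\le\; |u|_{2}\cdot|u|_{\ts}^{\ts/2}.
\]
Substituting this bound into the definition \eqref{delta} of $\delta_\alpha$ and using $\pnhu\ge a|u|_2^2$ yields the uniform estimate
\[
\delta_\alpha(u)\;\le\;\frac{\alpha}{2}\,\frac{|u|_2}{\pnhumeio}\;\le\;\frac{\alpha}{2\sqrt a}\qquad\text{for every }u\in H^1(\Om)\setminus\{0\}.
\]

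Given $\varsigma>0$, I would use the continuity of $F$ at $0$ to pick $\delta_\ast>0$ with $F(\delta_\ast)\le \sddnd/(\sddnd-\varsigma)$, and then choose $a=C(\varsigma,\Om)$ and $\alpha>\alpha_0(a,\Om)$ such that $\alpha/(2\sqrt a)\le \delta_\ast$. Combining \eqref{energia} with the H\"older bound and the monotonicity of $F$ then gives
\[
\frac{\ngupq+a\nupq}{\nupsq}\;=\;\beta(u)\;\ge\;\frac{\sddnd}{F(\delta_\alpha(u))}\;\ge\;\frac{\sddnd}{F(\delta_\ast)}\;\ge\;\sddnd-\varsigma,
\]
which is precisely \eqref{cherrier} with $C(\varsigma,\Om)=a$.

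The main obstacle lies in justifying the last selection: one needs $\alpha_0(a,\Om)<2\sqrt a\,\delta_\ast$, equivalently $\alpha_0(a,\Om)/\sqrt a$ arbitrarily small, for some choice of $a$. This calls for an upper bound on $\alpha_0(a,\Om)$ that does not deteriorate with $a$. The heuristic contradiction argument recalled in the introduction, built on the expansion $\Phi_{\alpha_k}(U_{\eps_k,P_k})=S^{N/2}/(2N)-(S^{N/2}/2)H(P_k)A(N)\eps_k+B(N)\alpha_k\eps_k/2+o(\alpha_k\eps_k)$, forces $\alpha_0(a,\Om)$ to be bounded above by a quantity of order $S^{N/2}A(N)\max_{\partial\Om}H/B(N)$, which is independent of $a$; once this $a$-uniform upper bound is made rigorous in Section~3, taking $a$ sufficiently large guarantees $\alpha_0(a,\Om)/\sqrt a<2\delta_\ast$, so the selection above is feasible and the corollary follows.
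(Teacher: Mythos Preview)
Your reduction of \eqref{energia} to $\beta(u)\ge(\sddnd)/F(\delta_\alpha(u))$ and the H\"older interpolation $|u|_{\tz}^{\tz}\le|u|_2\,|u|_{\ts}^{\ts/2}$, yielding $\delta_\alpha(u)\le\tfrac{\alpha}{2}|u|_2/\|u\|\le\tfrac{\alpha}{2\sqrt a}$, are both correct and are exactly the ingredients the paper uses. The gap is in your selection step: to force $\alpha_0(a,\Om)/\sqrt a<2\delta_\ast$ by taking $a$ large you need an upper bound on $\alpha_0(a,\Om)$ that does not grow with $a$, and nothing in the paper furnishes this. Section~3 treats existence for $\alpha<\alpha_0$ and the asymptotics under the standing hypothesis $\alpha_0=+\infty$; the finiteness of $\alpha_0$ is obtained in Section~4 by a contradiction argument carried out for a \emph{fixed} value of $a$, and the error terms there (the various $o(\alpha_k\eps_k)$, $o(\|w_k\|^2)$) are asymptotic in $k$ with $a$ held constant. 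Extracting an $a$-uniform bound from that analysis would require revisiting every estimate, which is neither done in the paper nor obviously routine.

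The paper's own argument sidesteps this issue entirely by keeping $a$ fixed. Rather than trying to make $\delta$ uniformly small, it combines \eqref{energia} with the quadratic upper bound $I_\alpha\le\beta\bigl(1+\tfrac{4}{\tz}\delta+\bar c\,\delta^2\bigr)$ of Lemma~\ref{lemmau} and your H\"older estimate (at $\alpha=\alpha_0$) to obtain
\[
\sddn\,|u|_{\ts}^2\;\le\;\|u\|^2+\tfrac{2\alpha_0}{\tz}\,\|u\|\,|u|_2+\bar c\,\tfrac{\alpha_0^2}{4}\,|u|_2^2.
\]
The cross term is then split by Young's inequality, $\tfrac{2\alpha_0}{\tz}\|u\|\,|u|_2\le\epsilon\|u\|^2+\tfrac{\alpha_0^2}{(\tz)^2\epsilon}|u|_2^2$, producing a factor $(1+\epsilon)$ in front of $|\nabla u|_2^2$; dividing through by $1+\epsilon$ gives \eqref{cherrier}. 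Thus the loss $\varsigma$ comes from the Young parameter $\epsilon$, not from pushing $\delta$ to zero, and no information about the $a$-dependence of $\alpha_0$ is required.
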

\begin{proof}
    From Lemma~\ref{lemmau},
    there exists a constant 
    $\bar{c}>0$ such that the right hand side of (\ref{energia}) is 
    less than or equal to 
    $$\beta\left(1+\frac{4}{\tz}
    \delta+\bar{c}\delta^2\right)$$ and 
    from H\"older's inequality
    $|u|_{\tz}^\tz\leq |u|_{2}|u|_{\ts}^{{\ts\!/2}}$.  Hence
    $\delta(u)\leq\frac{\alpha}{2}\frac{|u|_{2}}{||u||}$.  
    Let $\epsilon>0$.  For all $u\in H^1(\Om)$,
    \bas
    \sddn|u|_{\ts}^2&\leq&||u||^2\lb 1+\frac{2}{\tz}\alpha_{0}
    \frac{|u|_{2}}{||u||}+\bar{c}\frac{\alpha_{0}^2}{4}
    \frac{|u|_{2}^2}{||u||^2}\rb\\
    &=&||u||^2+\frac{2}{\tz}\alpha_{0}
    ||u||\,|u|_{2}+\bar{c}\frac{\alpha_{0}^2}{4}
    |u|_{2}^2\\
    &\leq&(1+\epsilon)|\nabla u|_{2}^2+
    \lb\frac{\alpha_{0}^2}{(\tz)^2\epsilon}+a\epsilon+
    \bar{c}\frac{\alpha_{0}^2}{4}\rb|u|_{2}^2.
    \eas
\end{proof}
\begin{remark}
    Let $\kappa>0$.  By scaling, we easily check that
    $$\alpha_{0}\lb\kappa^2a,\frac{\Om}{\kappa}\rb=\kappa\,
    \alpha_{0}(a,\Om).$$
\end{remark}

\section{Existence of least energy solutions and their asymptotic behavior}

In this section we start by proving the basic properties of the map
$\alpha\mapsto S_{\alpha}$ and assertion (i) of
Theorem~\ref{theorem}.  We then assume that the value $\alpha_{0}$ in 
Theorem~\ref{theorem} is infinite and analyze the asymptotic 
behavior of the least energy solutions as $\alpha\to+\infty$.

As explained in the previous section, we consider the minimization problem 
corresponding to
$$S_{\alpha}:=\inf\{I_{\alpha}(u)|u\in H^1(\Om), u\not = 0\}.$$
From Adimurthi and Mancini \cite{AM} and X.J.\ Wang~\cite{WX},
we know that 
\be\label{szero}
0<S_{0}<\sddn
\ee
(see (\ref{nukq}) and (\ref{ei}) ahead).  Obviously, $S_{\alpha}$ is 
nondecreasing as $\alpha$ increases.  Choose any point $P\in\partial\Om$.
By testing $I_{\alpha}$ with $U_{\eps,P}$ and letting $\eps\to 0$, we 
conclude that $S_{\alpha}\leq\sddn$ for all $\alpha\geq 0$.

\begin{lemma}\label{pscondition}
    If $S_{\alpha}<\sddn$, then $S_{\alpha}$ is achieved.
\end{lemma}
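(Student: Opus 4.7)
The plan is to follow the classical Brezis--Nirenberg compactness scheme, adapted to the Neumann setting, where boundary concentration yields the halved threshold $S/2^{2/N}$. Let $\{u_k\}$ be a minimizing sequence for $S_\alpha$. Since both $\beta$ and $\delta$ are invariant under $u\mapsto\lambda u$, so is $I_\alpha$, and I may normalize $|u_k|_{\ts}=1$; the bound (\ref{beta}) then makes $\{\|u_k\|\}$ bounded in $H^1(\Om)$. After extracting a subsequence, $u_k\rightharpoonup u$ weakly in $H^1(\Om)$, strongly in $L^p(\Om)$ for every $p<\ts$ (in particular in $L^{\tz}(\Om)$ and $L^2(\Om)$), and pointwise a.e.\ on $\Om$. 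By Ekeland's variational principle I can additionally arrange that $\{u_k\}$ is a Palais--Smale sequence for $I_\alpha$ restricted to the sphere $\{|v|_{\ts}=1\}$.

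The first step is to rule out $u\equiv 0$. If $u=0$, then $|u_k|_{\tz}^{\tz}\to 0$, so $\gamma(u_k)\to 0$ and $\delta(u_k)\to 0$; using the identity $1+\ts/2=\tz$ the explicit formula (\ref{iau}) gives $I_\alpha(u_k)=\|u_k\|^2+o(1)$. Since also $|u_k|_2\to 0$ and $a>0$, this yields $S_\alpha=\lim|\nabla u_k|_2^2$. However, the concentration--compactness principle of P.L.\ Lions, applied to a weakly null sequence with $|u_k|_{\ts}=1$ on a smooth bounded domain, forces $\liminf|\nabla u_k|_2^2\geq S/2^{2/N}$ (the half--space Sobolev constant, attained by concentration at a point of $\partial\Om$). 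This contradicts the hypothesis $S_\alpha<S/2^{2/N}$.

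The second step is to show that the nonzero weak limit $u$ actually attains $S_\alpha$. The Palais--Smale property implies $\Psi_\alpha'(u)=0$, so $t(u)u\in{\cal N}$ and $\Psi_\alpha(u)\geq S_\alpha^{N/2}/N$. Setting $v_k:=u_k-u$, the Brezis--Lieb lemma yields $\|u_k\|^2=\|u\|^2+\|v_k\|^2+o(1)$ and $|u_k|_{\ts}^{\ts}=|u|_{\ts}^{\ts}+|v_k|_{\ts}^{\ts}+o(1)$, while the compact embedding $H^1(\Om)\hookrightarrow L^{\tz}(\Om)$ gives $|u_k|_{\tz}^{\tz}\to|u|_{\tz}^{\tz}$. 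Combining these with the Nehari identity for $u_k$ and for $t(u)u$, one obtains an energy splitting of the form $\Phi_\alpha(t(u_k)u_k)=\Phi_\alpha(t(u)u)+E^\infty(v_k)+o(1)$, where $E^\infty$ is the limiting energy of the critical problem without the $\alpha$-perturbation, effectively posed on $\Rb^N_+$. The hard part is establishing $\liminf E^\infty(v_k)\geq S^{N/2}/(2N)$ whenever the escape $v_k$ is nontrivial; this is a Struwe--type profile decomposition result localizing boundary concentration. Granted this bound, the splitting produces
\[
S_\alpha^{N/2}/N=\lim\Phi_\alpha(t(u_k)u_k)\geq \Psi_\alpha(u)+S^{N/2}/(2N)\geq S_\alpha^{N/2}/N+S^{N/2}/(2N),
\]
an impossibility. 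Hence $v_k\to 0$ strongly in $H^1(\Om)$, so $u_k\to u$ in $H^1(\Om)$ and $u$ achieves $S_\alpha$.
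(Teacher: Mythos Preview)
Your first step—ruling out the trivial weak limit via concentration–compactness and the half-space Sobolev threshold—is essentially the paper's argument and is correct.

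The second step, however, has a genuine gap. You assert an energy splitting $\Phi_\alpha(t(u_k)u_k)=\Phi_\alpha(t(u)u)+E^\infty(v_k)+o(1)$ but do not derive it; the Nehari scaling $t(\cdot)$ depends nonlinearly on $\|u_k\|^2$, $|u_k|_{\tz}^{\tz}$ and $|u_k|_{\ts}^{\ts}$ simultaneously (see (\ref{t})), so Brezis--Lieb alone does not yield an additive decomposition of $\Psi_\alpha$. You then explicitly defer the key lower bound on the escaping energy to an unstated ``Struwe-type profile decomposition result'' and write ``Granted this bound\ldots''. That is precisely the content one must prove, not assume. The use of Ekeland's principle and the claim $\Psi_\alpha'(u)=0$ are also unnecessary (and the latter needs care, since $\Psi_\alpha$ is $0$-homogeneous).

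The paper avoids Palais--Smale machinery and profile decomposition entirely. After obtaining $u\neq 0$ it assumes, for contradiction, that the defect measure $\mu$ is nonzero (hence so is $\nu$), and writes the limiting value $S_\alpha$ explicitly as $[h(x_0)]^{2/N}/[4(\tz)^{2/N}]$, where $x_0=|u|_{\ts}^{\ts}\in(0,1)$ and $h:[0,1]\to\Rb$ is $h=f^N+2\cdot\ts f^{N-2}g$ with
\[
f(x)=\underline{\gamma}\,x^{\tz/\ts}+\sqrt{\underline{\gamma}^{2} x^{2\tz/\ts}+4g(x)},\qquad
g(x)=\underline{\beta}\,x^{2/\ts}+\tfrac{\|\mu\|}{\|\nu\|^{2/\ts}}(1-x)^{2/\ts},
\]
for $\underline{\beta}=\beta(u)$, $\underline{\gamma}=\gamma(u)$. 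A direct calculus computation of $h'$ shows it vanishes at most once in $(0,1)$, necessarily at a maximum (since $h'(0^+)=+\infty$ and $h'(1^-)=-\infty$), so the minimum of $h$ on $[0,1]$ lies only at an endpoint. But $[h(0)]^{2/N}/[4(\tz)^{2/N}]=\|\mu\|/\|\nu\|^{2/\ts}\geq S/2^{2/N}>S_\alpha$ and $[h(1)]^{2/N}/[4(\tz)^{2/N}]=I_\alpha(u)\geq S_\alpha$, while $x_0\in(0,1)$ forces $h(x_0)>\min(h(0),h(1))$; either alternative contradicts $S_\alpha=[h(x_0)]^{2/N}/[4(\tz)^{2/N}]$. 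Hence $\|\mu\|=0$ and $u$ is a minimizer. This one-parameter interpolation in the mass fraction $x$ is the substantive idea you are missing: it replaces the additive energy splitting (which does not hold in any simple form for this functional) by a monotonicity/unimodality property of $h$.
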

\begin{proof}
    Let $\uk$ be a minimizing sequence with $\nukps=1$.  Since 
    $\beta\leq I_{\alpha}$, 
    from (\ref{beta}),
    $(\uk)$ is bounded in $H^1(\Om)$.
    We can assume that $\uk\weak u$ in $H^1(\Om)$, $\uk\to u$ a.e.\ on 
    $\Om$, and $|\nabla(\uk-u)|^2\weak\mu$ and
    $|\uk-u|^\ts\weak\nu$ in the sense of measures on $\bar\Om$.  
    Modulo a subsequence, the concentration-compactness lemma 
    implies that
    $$\lim_{k\to\infty}\ngukpq=\ngupq+||\mu||$$
    and
    $$\lim_{k\to\infty}\nukpss=\nupss+||\nu||=1,$$
    where $$\sddn||\nu||^\frac 2\ts\leq||\mu||.$$
    This last inequality is an immediate consequence of
    inequality (\ref{cherrier}).  For 
    $S_{\alpha}=\lim_{k\to\infty}I_{\alpha}(\uk)$,
    we obtain that $S_{\alpha}$ equals $$\iab,$$
    with
    $$\beta_{\infty}=\nhu+||\mu||=
    \frac{\nhu+||\mu||}{(\ius+||\nu||)^{\frac{2}{\ts}}}$$
    and
    $$
    \gamma_{\infty}=\alpha\iuz=\alpha\frac{\iuz}{(\ius+||\nu||)^{\zs}}.
    $$
    If $u=0$, then $$\beta_{\infty}=\frac{||\mu||}{||\nu||^\frac 
    2\ts}\geq\sddn,$$ a contradiction. So $ u\neq 0$. 

    We claim that $||\mu||=0$.  We argue by contradiction and suppose 
    that $||\mu||\ne 0$.  If $||\nu||=0$, then 
    $S_{\alpha}>I_{\alpha}(u)$, which is impossible.  So $||\nu||\neq 
    0$.  
    
    Let $x_{0}:=\ius$, so that $1-x_{0}=||\nu||$.  We define
    $f$, $g$ and $h:[0,1]\to\Rb$ by
    $$f(x):=\gx+\sqrt{\gxq+4\bb x^\ds+4{\textstyle
    \frac{||\mu||}{||\nu||^{2/\ts}}}(1-x)^\ds},$$
    $$g(x):=\bb x^\ds+{\textstyle
    \frac{||\mu||}{||\nu||^{2/\ts}}}(1-x)^\ds$$
    and
    $$h:= f^{N}+2\cdot\ts f^{N-2} g,$$
    for $\bb=\beta(u)$ and $\bg=\gamma(u)$.
    The value $S_{\alpha}$ is
    $$
    S_{\alpha}=\frac{1}{4(\tz)^{\frac{2}{N}}}
    \left[h(x_{0})\right]^{\frac{2}{N}}.$$
    We wish to prove that the minimum of $h$
    occurs at $0$ or $1$.  The former case corresponds to
    $u=0$ and the latter to $||\nu||=0$.  In either case we are led 
    to a contradiction.  This will prove that $||\mu||=0$, thereby 
    establishing the claim.
    
    The derivative of $h$ is
    $$h'=f^{N-3}[N(f^2+4g)f'+2\cdot\ts fg'].$$
    Since
    $$f^2+4g=2f\sqrt{\gxq+4g},$$
    we can write
    $$h'=2f^{N-2}\left[N\sqrt{\gxq+4g}\, f'+\ts g'\right].$$
    The expression for $h'$ can be further simplified by computing 
    $f'$:
    \bas
        \sqrt{\gxq+4g}\, f'&=&\frac{1}{\ts}\left[\tz \bg x^{\zs-1} 
        \sqrt{\gxq+4g}
        +\tz \bg^2 x^{2\zs-1} +4g'\right]\\
        &=&\zs\left[\bg x^{\zs-1} f+2\frac{\ts}{\tz}g'
        \right].
    \eas
    This yields
    \bas
    h'&=&2(N-1)f^{N-2}\left[\bg x^{\zs-1} f+\ts g'
        \right]\\
        &=&2(N-1)f^{N-2}x^{\zs-1}\left[\bg  f+\ts x^{1-\zs} g'
        \right].
    \eas
    We notice that $h'(0)=+\infty$ and $h'(1)=-\infty$; at a zero of 
    $h'$, $g'<0$.

    At a point of minimum of $h$ in the interior of $[0,1]$,
    $h'=0$ and 
    \bas
        \sqrt{\gxq+4g}\, f'&=& \zs\left[\bg x^{\zs-1} f+\ts g'
        -\ts\left(1-\frac{2}{\tz}\right)g'\right]\\
        &=&-(\tz-2)g';
    \eas  
    we notice that at a zero of 
    $h'$, $f'>0$.
    
    We consider
    $$\kappa:=-\ts x^{1-\zs} g',$$
    whose derivative is
    \bas\kappa'&=&-(\ts-\tz)x^{-\zs}g'-\ts 
    x^{\frac{\ts-\tz}{\ts}}g^{\prime\prime}\\
    &>&-(\ts-\tz)x^{-\zs}g'\\
    &=&x^{-\zs}\sqrt{\gxq+4g}\,f'\mbox{\ \ \ for $h'=0$}\\
    &>&\bg f'.    
    \eas
    
    The zeros of $h'$ occur when $\bg f=\kappa$.  We just proved 
    that $\kappa'>\bg f'$ at the zeros of $h'$.  This implies that the 
    graphs of $\bg f$ and $\kappa$ can cross at most once, and that 
    $h'$ has at most one zero.  If the function $h$ were to have a 
    minimum in the interior of $[0,1]$, then $h'$ 
    would have at least  three zeros 
    because $h'(0)=+\infty$ and $h'(1)=-\infty$.  We conclude that $h$ 
    has no minimum inside $[0,1]$.  (The conditions on the 
    derivative of $h$ at the end points of the interval,
    or the fact that the graphs of $\bg f$ and $\kappa$ cross,
    imply that $h'$ does vanish 
    inside $[0,1]$, at a point of maximum of $h$.)    
    Therefore the minimum of $h$ occurs either at $0$ or $1$
    and we have proved our claim. 
    
    Since $||\mu||=0$, the function $u$ is a minimizer for 
    $I_{\alpha}$.
    \end{proof}

\begin{lemma}\label{cont}
    The map $\alpha\mapsto S_{\alpha}$ is continuous on $[0,+\infty[$.
\end{lemma}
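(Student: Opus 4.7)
The plan is to leverage the explicit formula~(\ref{iau}) to control $I_{\alpha}(u)$ as a function of $\alpha$ \emph{uniformly in $u$}, and then transfer this control to the infimum $S_{\alpha}$. Write
\[
I_{\alpha}(u) = \beta(u)\, G(\delta_{\alpha}(u)), \qquad G(t) := \tfrac{1}{(\tz)^{2/N}}\left[(t+\sqrt{t^2+1})^{N} + \tfrac{\ts}{2}(t+\sqrt{t^2+1})^{N-2}\right]^{2/N},
\]
where $G$ is smooth and strictly increasing on $[0,\infty)$ with $G(0)=1$ (by the identity $1+\ts/2=\tz$ already used in~(\ref{beta})). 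Moreover $\delta_{\alpha}(u) = \alpha\, c(u)$, where $c(u) := \iuz/(2\,\pnhumeio\,|u|_{\ts}^{\ts/2})$ does not depend on $\alpha$.

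The main step is a uniform upper bound on $c(u)$. Writing $|u|^{\tz} = |u|\cdot|u|^{\tz-1}$, applying Cauchy-Schwarz, and using $2(\tz - 1) = \ts$, one obtains $\iuz \leq |u|_{2}\,|u|_{\ts}^{\ts/2}$ (the H\"older estimate already invoked in the proof of the Corollary above). Combining this with the trivial coercivity $\pnhumeio \geq \sqrt{a}\,|u|_{2}$ yields $c(u) \leq 1/(2\sqrt{a})$ for every $u \in H^1(\Om)\setminus\{0\}$. Therefore, for $\alpha'$ ranging over any fixed bounded interval $[0,A]$, the quantity $\delta_{\alpha'}(u)$ stays in the compact interval $[0, A/(2\sqrt{a})]$ uniformly in $u$; on this compact interval $G$ is Lipschitz with some constant $L$ and satisfies $G \geq 1$.

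Fix $\alpha \geq 0$ and take $\alpha'$ in a bounded neighbourhood of $\alpha$. The Lipschitz bound gives
\[
\left| G(\delta_{\alpha'}(u)) - G(\delta_{\alpha}(u)) \right| \leq L\, c(u)\, |\alpha' - \alpha| \leq \tfrac{L}{2\sqrt{a}}\, |\alpha' - \alpha|,
\]
uniformly in $u$. Dividing by $G(\delta_{\alpha}(u)) \geq 1$ yields the uniform ratio estimate $|I_{\alpha'}(u)/I_{\alpha}(u) - 1| \leq \tfrac{L}{2\sqrt{a}}\,|\alpha' - \alpha|$ for every $u \neq 0$. Passing to the infimum and invoking the bound $S_{\alpha} \leq \sddn$ established just before the lemma, I obtain $|S_{\alpha'} - S_{\alpha}| \leq \tfrac{L}{2\sqrt{a}}\,|\alpha' - \alpha|\cdot\sddn$, which proves local Lipschitz continuity of $\alpha \mapsto S_{\alpha}$ and hence continuity on $[0,+\infty[$.

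The only genuinely non-trivial ingredient is the H\"older estimate that forces $c(u)$ to be bounded independently of $u$; once that is in hand, the product structure $I_{\alpha}(u) = \beta(u)\,G(\delta_{\alpha}(u))$ makes the passage from pointwise to uniform estimates essentially automatic. In particular, the proof does not rely on monotonicity of $\alpha \mapsto S_{\alpha}$.
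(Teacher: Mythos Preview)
Your proof is correct and takes a genuinely different route from the paper's. The paper argues right- and left-continuity separately: for right-continuity it invokes Lemma~\ref{pscondition} (existence of a minimizer when $S_{\bar\alpha}<\sddn$, proved via concentration-compactness) and squeezes $S_{\bar\alpha}\leq S_{\alpha}\leq I_{\alpha}(u_{\bar\alpha})$; for left-continuity it runs the concentration-compactness argument again on a sequence of minimizers. Your approach sidesteps all of this by exploiting the product structure $I_{\alpha}(u)=\beta(u)\,G(\delta_{\alpha}(u))$ together with the uniform bound $c(u)\leq 1/(2\sqrt{a})$, which forces $\delta_{\alpha}(u)$ into a fixed compact interval and makes $G$ uniformly Lipschitz there. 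The payoff is twofold: you obtain a quantitative local Lipschitz estimate rather than mere continuity, and you avoid any appeal to concentration-compactness or to the existence of minimizers. The paper's approach, on the other hand, is natural in context because Lemma~\ref{pscondition} is needed anyway and the same machinery drives the rest of Section~3; but as a stand-alone proof of continuity yours is considerably more elementary.
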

\begin{proof}
    Let $\bar\alpha\in[0,+\infty[$.  First we prove that 
    $\alpha\mapsto S_{\alpha}$ is continuous from the right at 
    $\bar\alpha$.
    If $S_{\bar\alpha}=\sddn$, then 
    continuity from the right at $\bar\alpha$
    is obvious.
    If $S_{\bar\alpha}<\sddn$, let $u_{\bar\alpha}$ be a minimizer of
    $I_{\bar\alpha}$, which exists by the previous lemma.  If 
    $\alpha>\bar\alpha$, then $S_{\bar\alpha}\leq S_{\alpha}\leq
    I_{\alpha}(u_{\bar\alpha})\to S_{\bar\alpha}$ as 
    $\alpha\searrow\bar\alpha$.  This proves continuity from the right 
    at $\bar\alpha$.
    
    To prove continuity from the left  we show that 
    $\lim_{\alpha\nearrow\bar\alpha}S_{\alpha}=S_{\bar\alpha}$.
    If  the value of the limit on the left hand side is $\sddn$, then this 
    equality is obvious.  So suppose
    $\lim_{\alpha\nearrow\bar\alpha}S_{\alpha}<\sddn$.
    Choose a sequence $\ak\nearrow\bar\alpha$ and
    $\uk\in H^1(\Om)$, with $\nukps=1$, such that $I_{\ak}(\uk)=S_{\ak}$.
    By (\ref{beta}), the sequence $(\uk)$ is bounded in $H^1(\Om)$ and 
    we can assume 
    that $\uk\weak u$ in $H^1(\Om)$.  
    An application of the concentration-compactness principle,  
    as in the previous lemma, shows that $u\neq 0$ and
    $$\lim_{k\to\infty}I_{\ak}(\uk)\geq I_{\bar\alpha}(u).$$
    So,
    $$S_{\bar\alpha}\leq I_{\bar\alpha}(u)\leq\lim_{k\to\infty}I_{\ak}(\uk)
    =\lim_{\ak\nearrow\bar\alpha}S_{\alpha}$$
    and $S_{\bar\alpha}=\lim_{\ak\nearrow\bar\alpha}S_{\alpha}$.
\end{proof}

By the previous lemma, the value
\be\label{min}
\alpha_{0}:=
\left\{\begin{array}{l}
+\infty,\mbox{\ if\ }S_{\alpha}<\sddn\mbox{\ for all\ 
}\alpha\in[0,+\infty[,\\
\min\left\{\alpha\in[0,+\infty[\;\left|\;S_{\alpha}=\sddn\right.\right\},
\mbox{\ otherwise.}
\end{array}\right.
\ee
is well defined.  By (\ref{szero}) it is not zero.
Lemma~\ref{pscondition} implies the following two corollaries:
\begin{corollary}
    The map $\alpha\mapsto S_{\alpha}$ is strictly increasing on 
    $[0,\alpha_{0}]$.
\end{corollary}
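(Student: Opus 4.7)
The plan is to combine Lemma~\ref{pscondition} with the observation that, for each fixed $u\in H^1(\Om)\setminus\{0\}$, the map $\alpha\mapsto I_\alpha(u)$ is strictly increasing on $[0,+\infty[$. Granting this, fix $0\leq \alpha_1<\alpha_2\leq \alpha_0$. If $\alpha_2=\alpha_0<+\infty$, then $S_{\alpha_1}<\sddn=S_{\alpha_0}$ is immediate from definition~(\ref{min}). If instead $\alpha_2<\alpha_0$, then $S_{\alpha_2}<\sddn$, so Lemma~\ref{pscondition} yields a minimizer $u_2$ of $I_{\alpha_2}$, and the chain
$$ S_{\alpha_1}\leq I_{\alpha_1}(u_2)<I_{\alpha_2}(u_2)=S_{\alpha_2} $$
delivers the strict monotonicity.

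The only point that needs verification is the pointwise strict monotonicity of $\alpha\mapsto I_\alpha(u)$. I would read this off directly from formula~(\ref{iau}): the factor $\beta(u)=\pnhu/\nupsq$ is independent of $\alpha$, while
$$ \delta_\alpha(u)=\frac 12\,\frac{\alpha\iuz}{\pnhumeio\cdot|u|_{\ts}^{\ts/2}} $$
is strictly increasing in $\alpha\geq 0$ because $\iuz>0$ whenever $u\not\equiv 0$. Since $\delta\mapsto\delta+\sqrt{\delta^2+1}$ is a positive, strictly increasing function of $\delta\geq 0$, and the bracket in~(\ref{iau}) is a polynomial in that quantity with strictly positive coefficients, the whole expression $I_\alpha(u)$ is strictly increasing in $\alpha$.

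I do not foresee a genuine obstacle: the corollary is essentially a repackaging of Lemma~\ref{pscondition} together with this pointwise monotonicity check. The one mildly subtle point is that the test function in the comparison must be a minimizer \emph{at the larger} parameter $\alpha_2$; had one instead tested with a minimizer $u_1$ of $I_{\alpha_1}$, one would only obtain $S_{\alpha_1}=I_{\alpha_1}(u_1)<I_{\alpha_2}(u_1)$ together with $S_{\alpha_2}\leq I_{\alpha_2}(u_1)$, which fails to separate $S_{\alpha_1}$ from $S_{\alpha_2}$. This orientation of the argument also explains why the statement is phrased on $[0,\alpha_0]$: above $\alpha_0$ we lose the minimizer that makes the comparison work.
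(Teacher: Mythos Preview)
Your proof is correct and follows exactly the approach the paper intends: the corollary is stated without proof as an immediate consequence of Lemma~\ref{pscondition}, and your argument---test with a minimizer at the larger parameter, then use the pointwise strict monotonicity of $\alpha\mapsto I_\alpha(u)$ read off from~(\ref{iau})---is precisely the intended one-line justification. Your remark about why one must test with the minimizer at $\alpha_2$ rather than $\alpha_1$ is apt and makes explicit what the paper leaves implicit.
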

\begin{corollary}\label{coro}
    If $\alpha\in[0,\alpha_{0}[$, then \rf\ has a least energy solution 
    $u_{\alpha}$.  If $\alpha\in]\alpha_{0},+\infty[$, then \rf\ does 
    not have a least energy solution.
\end{corollary}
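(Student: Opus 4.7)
The plan is to derive both parts directly from Lemma~\ref{pscondition}, the definition~(\ref{min}) of $\alpha_{0}$, and the explicit formula~(\ref{iau}) for $I_{\alpha}$.

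For part~(i), suppose $\alpha\in[0,\alpha_{0}[$. By the definition of $\alpha_{0}$ we have $S_{\alpha}<\sddn$, so Lemma~\ref{pscondition} produces a minimizer $u\in H^{1}(\Om)\setminus\{0\}$ of $I_{\alpha}$; since $I_{\alpha}=(N\Psi_{\alpha})^{2/N}$ is an increasing function of $\Psi_{\alpha}$, this $u$ also minimizes $\Psi_{\alpha}$. Replacing $u$ by $|u|$ leaves $\beta$ and $\gamma_{\alpha}$, hence $\Psi_{\alpha}$, unchanged, so one may assume $u\geq 0$. By the natural-constraint property of ${\cal{N}}$ recorded in Section~2 and established in Appendix~A, $v:=t(u)u$ is a critical point of $\Phi_{\alpha}$. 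Rewriting the Euler--Lagrange equation as $-\Delta v+(a+\alpha v^{\tzd})v=v^{\tsu}\geq 0$ with $v\geq 0$, the strong maximum principle together with Hopf's lemma (to rule out a boundary zero while respecting the Neumann condition) gives $v>0$ in $\Om$, so $v$ is a least energy solution of~\rf.

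For part~(ii), I argue by contradiction. Assume that $\alpha>\alpha_{0}$ and that~\rf\ admits a least energy solution $u>0$. Since $\beta\mapsto S_{\beta}$ is nondecreasing, is bounded above by $\sddn$, and attains $\sddn$ at $\beta=\alpha_{0}$, it is identically $\sddn$ on $[\alpha_{0},+\infty[$; in particular $I_{\alpha}(u)=S_{\alpha}=\sddn$. Because $u>0$ in $\Om$ one has $\iuz>0$, so by~(\ref{delta}) the quantity $\delta_{\beta}(u)$ is strictly increasing in $\beta>0$, and the explicit formula~(\ref{iau}) combined with the strict monotonicity of $\delta\mapsto\delta+\sqrt{\delta^{2}+1}$ shows that $\beta\mapsto I_{\beta}(u)$ is strictly increasing. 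Hence $I_{\alpha_{0}}(u)<I_{\alpha}(u)=\sddn=S_{\alpha_{0}}$, contradicting the definition of $S_{\alpha_{0}}$ as an infimum.

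The only genuinely delicate point lies in part~(i): passing from a minimizer of the quotient $I_{\alpha}$ to a positive classical solution of the boundary value problem. This rests on three ingredients that are either trivial or already in the paper, namely the invariance of $\Psi_{\alpha}$ under $u\mapsto|u|$, the natural-constraint property of ${\cal{N}}$, and strict positivity of a nontrivial nonnegative weak solution via the strong maximum principle applied to the bounded-coefficient operator $-\Delta+(a+\alpha v^{\tzd})$. Once these are assembled, part~(ii) reduces to a one-line pointwise monotonicity argument in the parameter $\beta$.
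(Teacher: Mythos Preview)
Your argument is correct and follows the route the paper intends: the authors state this corollary as an immediate consequence of Lemma~\ref{pscondition} and the definition~(\ref{min}) of $\alpha_{0}$ without writing out a proof, and you have supplied precisely the missing details---the passage from a minimizer of $I_{\alpha}$ to a positive critical point of $\Phi_{\alpha}$ in part~(i), and the strict monotonicity of $\alpha\mapsto I_{\alpha}(u)$ for fixed $u\neq 0$ in part~(ii). One minor remark: in part~(ii) you do not actually need the positivity of $u$ to conclude $|u|_{\tz}^{\tz}>0$; any $u\in H^{1}(\Om)\setminus\{0\}$ already has this property, so the contradiction goes through for any nonzero critical point at level $\sddn$.
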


This proves (i) of Theorem~\ref{theorem}.  Assertion (ii) of 
Theorem~\ref{theorem}
will also follow once we establish that $\alpha_{0}$ is finite.  

\begin{lemma}\label{infinito}
    If $S_{\alpha}<\sddn$ for all $\alpha\geq 0$, then 
    \be\label{salpha} \lim_{\alpha\to+\infty}S_{\alpha}=\sddn
    .\ee  
    Suppose $\alpha_{k}\to+\infty$ as $k\to+\infty$ and
    $\uk$ is a minimizer for $I_{\alpha_{k}}$ satisfying 
    \rfk.  Then $\uk\weak 0$
    in $H^1(\Om)$ and
    $$\mk:=\max_{\bar\Om}\uk$$
    converges to $+\infty$, as $k\to\infty$.
\end{lemma}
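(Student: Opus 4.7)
I build the proof around the normalized minimizing sequence $v_k:=\uk/|\uk|_{\ts}$, for which $|v_k|_{\ts}=1$ and $I_{\alpha_k}(v_k)=S_{\alpha_k}\le\sddn$. By (\ref{beta}), $||v_k||^2=\beta(v_k)\le\sddn$, so $(v_k)$ is bounded in $H^1(\Omega)$. Passing to a subsequence, $v_k\weak v$ in $H^1(\Omega)$ and, since $\tz<\ts$, Rellich--Kondrachov gives $v_k\to v$ in $L^{\tz}(\Omega)$. If $v\neq 0$, then $\gamma_{\alpha_k}(v_k)=\alpha_k|v_k|_{\tz}^{\tz}\to+\infty$, and formula (\ref{energiad}) forces $I_{\alpha_k}(v_k)\to+\infty$, contradicting $S_{\alpha_k}\le\sddn$. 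Hence $v=0$, and in particular $|v_k|_2\to 0$.

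\textbf{Proof of (\ref{salpha}).} Applying Aubin's inequality (\ref{cherrier}) to $v_k$ and letting $\varsigma\searrow 0$, combined with $|v_k|_2\to 0$, yields $\liminf_k\beta(v_k)\ge\sddn$. The sandwich $\beta(v_k)\le S_{\alpha_k}\le\sddn$ then forces $S_{\alpha_k}\to\sddn$. Since $\alpha\mapsto S_\alpha$ is nondecreasing and the sequence $\alpha_k\to+\infty$ was arbitrary, (\ref{salpha}) follows.

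\textbf{Weak convergence of $\uk$.} From the previous step $\beta(v_k)\to\sddn$ and $I_{\alpha_k}(v_k)/\beta(v_k)\to 1$. Formula (\ref{iau}) shows that this ratio is a strictly increasing function of $\delta$ equal to $1$ precisely at $\delta=0$, so $\delta_{\alpha_k}(v_k)\to 0$, equivalently $\gamma_{\alpha_k}(v_k)=\alpha_k|v_k|_{\tz}^{\tz}\to 0$. Writing $\uk=t_kv_k$ with $t_k=|\uk|_{\ts}$ and using $\ts-2=2(\tz-2)$, the Nehari identity $\Phi'_{\alpha_k}(\uk)\uk=0$ becomes a quadratic in $s:=t_k^{\tz-2}$ with positive root
$$
t_k^{\tz-2}=\tfrac{1}{2}\bigl[\alpha_k|v_k|_{\tz}^{\tz}+\sqrt{(\alpha_k|v_k|_{\tz}^{\tz})^2+4||v_k||^2}\bigr]\longrightarrow(\sddn)^{1/2}.
$$
Hence $t_k$ is bounded, and $\uk=t_kv_k\weak 0$ in $H^1(\Omega)$.

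\textbf{Blow-up $\mk\to+\infty$.} From $\uk\weak 0$ and Rellich, $|\uk|_{\tz}\to 0$. A direct computation (using $||u||^2=|u|_{\ts}^{\ts}-\alpha|u|_{\tz}^{\tz}$ on $\mathcal{N}$, see Appendix A) gives $\Phi_\alpha(u)=\tfrac{1}{N}|u|_{\ts}^{\ts}-\tfrac{\alpha}{(N-2)\tz}|u|_{\tz}^{\tz}$ for $u\in\mathcal{N}$; combined with $\Phi_{\alpha_k}(\uk)=S_{\alpha_k}^{N/2}/N\to S^{N/2}/(2N)>0$ this yields $\liminf_k|\uk|_{\ts}^{\ts}\ge S^{N/2}/2$. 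If $\mk\le M$ were bounded, the pointwise estimate $\uk^{\ts}\le M^{\ts-\tz}\uk^{\tz}$ would force $|\uk|_{\ts}^{\ts}\le M^{\ts-\tz}|\uk|_{\tz}^{\tz}\to 0$, a contradiction, so $\mk\to+\infty$. The main obstacle is the third paragraph: transferring the clean asymptotics from the normalized sequence $(v_k)$ back to the actual PDE solutions $(\uk)$ requires explicitly solving the Nehari quadratic and combining $\gamma_{\alpha_k}(v_k)\to 0$ with $||v_k||\to(\sddn)^{1/2}$ to pin down that the scaling factor $t_k$ stays bounded.
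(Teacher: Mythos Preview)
Your argument is correct. The logical skeleton matches the paper's, but the execution differs in three places worth noting. \emph{Boundedness of $u_k$}: the paper works directly with $u_k$, deriving from $\gamma(u_k)^2\le(\tz)^{2/N}I_{\alpha_k}(u_k)$ and the Nehari identity an inequality of the form $|u_k|_{\ts}^{\ts-2}\le C+C'|u_k|_{\ts}^{\tz-2}$, whence $|u_k|_{\ts}$ is bounded; you instead normalize to $v_k$, get $\|v_k\|$ bounded for free from $\beta\le I_\alpha$, and then recover $t_k=|u_k|_{\ts}$ via the explicit Nehari quadratic~(\ref{t}). \emph{Limit~(\ref{salpha})}: the paper applies concentration--compactness to the defect measures and uses $(S/2^{2/N})||\nu||^{2/\ts}\le||\mu||$; you bypass this by applying Aubin's inequality~(\ref{cherrier}) directly to $v_k$ once $|v_k|_2\to 0$ is known---the paper's measure inequality is itself a consequence of Aubin, so the two routes are close. \emph{Blow-up $M_k\to\infty$}: the paper uses the one-line maximum-principle inequality $a+\alpha_kM_k^{\tz-2}\le M_k^{\ts-2}$ at the maximum point, giving $M_k^{2/(N-2)}\ge\alpha_k$ immediately; your interpolation $u_k^{\ts}\le M_k^{\ts-\tz}u_k^{\tz}$ also works but needs the preparatory lower bound on $|u_k|_{\ts}^{\ts}$. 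One minor point: you argue along subsequences while the lemma is stated for the full sequence; monotonicity of $\alpha\mapsto S_\alpha$ handles~(\ref{salpha}), and the standard subsequence-of-subsequence trick handles $u_k\weak 0$ and $M_k\to\infty$.
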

\begin{proof}
    Suppose $S_{\alpha}<\sddn$ for all $\alpha\geq 0$ and choose any
    sequence $\alpha_{k}\to+\infty$ as $k\to+\infty$.  Let $\uk$ 
    be a minimizer for $I_{\alpha_{k}}$ satisfying 
    \rfk, which necessarily exists by 
    Lemma~\ref{pscondition} and rescaling. 
    We claim that $\uk$ is bounded in $H^1(\Om)$.  Indeed,
    by (\ref{energiad}), 
    $$
    \frac{1}{(\tz)^\frac 2N}\;
    \gamma^2(u_{k})\leq I_{\alpha}(u_{k})\leq\sddn.
    $$
    So, 
    $$\ak|\uk|_{\tz}^\tz\leq\left(\frac\tz 2\right)^\frac 
    1NS^\frac{1}{2}|\uk|_{\ts}^\tz.$$
    By \rfk,
    \be\label{eqn}
    \nukpss=\nhukpq+\ak\iukpz.
    \ee
    Together,
    \bas
    |\uk|_{\ts}^{\ts-2}&\leq&\beta(u_{k})+\left(\frac\tz 2\right)^\frac 
    1NS^\frac{1}{2}|\uk|_{\ts}^{\tz-2}\\
    &\leq&\sddn
    +\left(\frac\tz 2\right)^\frac 
    1NS^\frac{1}{2}|\uk|_{\ts}^{\tz-2},
    \eas
    since, by (\ref{beta}), $\beta(u_{k})
    \leq I_{\alpha_{k}}(u_{k})\leq\sddn$.
    So $\nukps$ is bounded.  Recalling that
    $\beta(u_{k})=\frac{\nhukpq}{\nukpsq}$,
    we conclude that $\uk$ is bounded in $H^1(\Om)$.
    
    From (\ref{eqn}), we conclude that $\uk\weak 0$ in $H^1(\Om)$.
    We can assume that $\uk\to 0$ a.e.\ on 
    $\Om$, and $|\nabla\uk|^2\weak\mu$ and
    $|\uk|^\ts\weak\nu$ in the sense of measures on $\bar\Om$.  
    Then
    \be\label{limu}\lim_{k\to\infty}\ngukpq=||\mu||\ee
    and
    \be\label{limd}\lim_{k\to\infty}\nukpss=||\nu||,\ee
    where \be\label{limt}\sddn||\nu||^\frac 2\ts\leq||\mu||.\ee
    Thus
    \be\label{auxu}
    \sddn\geq \lim_{k\to\infty}S_{\ak}=\lim_{k\to\infty}
    I_{\alpha_{k}}(u_{k})\geq\frac{||\mu||}
    {||\nu||^\frac 2\ts}\geq\sddn,\ee
    and the inequalities in (\ref{auxu}) are equalities. 
    This proves (\ref{salpha}).  
    
    From \rfk, the values $M_{k}$ satisfy
    \be\label{mmkk}a+\ak M_{k}^\tzd\leq M_{k}^\tsd\ee
    and consequently $\mk\to +\infty$ as $k\to+\infty$.  
\end{proof}

\begin{lemma}\label{instantao}  
       Let $S_{\ak}<\sddn$ and 
       $S_{\ak}\to\sddn$ as $\alpha_{k}\to\alpha_{0}\in]0,+\infty]$.
       Denote by $\uk\in H^1(\Om)$ a minimizer for $I_{\alpha_k}$ 
       satisfying \rfk.  
       In case $\alpha_{0}<+\infty$ suppose  
       that $\uk\weak 0$.
       Then
       \be\label{sndd}
       \lim_{k\to\infty}\ngukpq=\lim_{k\to\infty}\nukpss=\sndd.
       \ee
       Moreover, if $\alpha_{0}=+\infty$, or
       if $\alpha_{0}<+\infty$ and we further assume that
       $\lim_{\alpha_{k}\to\alpha_{0}}M_{k}=+\infty$,
       then we also have
       \be\label{akdk}
       \lim_{k\to\infty}\ak\delta_{k}=0,\ee
       \be\label{convum}
       \lim_{k\to\infty}|\nabla\uk-\nabla\ium|_{2}=0
       \ee
       and $\pkk\in\partial\Om$, for large $k$.
       Here, we are denoting
       $$
       \delta_{k}:=M_{k}^{-\frac{2}{N-2}},
       $$
       and $P_{k}$ is such that
       $M_{k}=\uk(\pkk)$.
\end{lemma}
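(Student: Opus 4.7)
The plan splits into two parts: first I establish the energy identities (\ref{sndd}), and then a blow-up analysis yields (\ref{akdk})--(\ref{convum}) together with $\pkk\in\partial\Om$.

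For (\ref{sndd}), I would apply P.L.~Lions' concentration-compactness principle to $\uk$. Since $\uk\weak 0$ in $H^1(\Om)$ (by hypothesis when $\alpha_0<+\infty$, and by Lemma~\ref{infinito} when $\alpha_0=+\infty$) and $\uk$ is $H^1$-bounded (as in the proof of Lemma~\ref{infinito}), the compact embedding $H^1(\Om)\hookrightarrow L^q(\Om)$ for $q=2,\tz$ gives $|\uk|_{2},|\uk|_{\tz}\to 0$. Writing $|\nabla\uk|^2\weak\mu$ and $|\uk|^\ts\weak\nu$ on $\bar\Om$, the boundary version of concentration-compactness (equivalently, the Aubin--Cherrier inequality (\ref{cherrier})) gives $\nu=\sum_{j}\nu_j\delta_{x_j}$ purely atomic with $\mu\geq\sum_{j}\mu_j\delta_{x_j}$, $\mu_j\geq\sddn\,\nu_j^{2/\ts}$ at boundary atoms and $\mu_j\geq S\nu_j^{2/\ts}$ at interior atoms. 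Combining $\beta(\uk)\leq I_{\ak}(\uk)=S_{\ak}\to\sddn$ with $\beta(\uk)\to\|\mu\|/\|\nu\|^{2/\ts}\geq\sddn$ (the latter by the atom inequalities and superadditivity of $t\mapsto t^{2/\ts}$) forces $\nu$ to consist of a single boundary atom, $\mu$ to have no diffuse part, and $\beta(\uk)\to\sddn$. Formula (\ref{iau}) with $I_{\ak}/\beta\to 1$ then forces $\delta_{\alpha_{k}}(\uk)\to 0$, hence $\ak\iukpz\to 0$. Substituting into $\nukpss=\pnhukpq+\ak\iukpz$ (the identity \rfk\ tested against $\uk$) and using $\pnhukpq\to\ngukpq\to\|\mu\|$, I obtain $\|\nu\|=\|\mu\|=(\sddn)^{N/2}=\sndd$, which is (\ref{sndd}).

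For the remaining assertions I would carry out a standard blow-up analysis. Define $v_k(x):=\delta_k^{(N-2)/2}\uk(\pkk+\delta_k x)$ on $\Om_k:=\delta_k^{-1}(\Om-\pkk)$, so that $v_k(0)=\max v_k=1$. A direct computation shows that $v_k$ satisfies $-\Delta v_k+a\delta_k^2 v_k=v_k^{\tsu}-(\ak\delta_k)v_k^{\tzu}$ in $\Om_k$ with the corresponding Neumann BC. Inequality (\ref{mmkk}) gives $a\delta_k^2\to 0$ and $\ak\delta_k\leq 1$. Passing to a subsequence, $\pkk\to P\in\bar\Om$ and $\ak\delta_k\to\ell\in[0,1]$, and $\Om_k$ expands to either $\Rb^N$ (interior case) or a half-space (boundary case). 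By uniform $C^{2,\eta}_{\mathrm{loc}}$ bounds, $v_k\to V$ in $C^1_{\mathrm{loc}}$, with $V\geq 0$ solving $-\Delta V=V^{\tsu}-\ell V^{\tzu}$ on the limit domain (and Neumann BC in the half-space case) and $V(0)=\max V=1$.

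The rigidity step identifies $V$ and forces $\ell=0$. Fatou and (\ref{sndd}) give $\int V^\ts\leq\sndd$, and multiplying the equation by $V$ yields $\int|\nabla V|^2=\int V^\ts-\ell\int V^\tz\leq\int V^\ts$. In the interior case, the sharp Sobolev inequality on $\Rb^N$ forces $\int V^\ts\geq S^{N/2}>\sndd$, a contradiction; hence $P\in\partial\Om$. In the boundary case, reflecting $V$ across $\partial\Rb^N_+$ (allowed by the Neumann BC) and applying Sobolev on $\Rb^N$ to the doubled function gives $\int V^\ts\geq\sndd$, so equality holds throughout. This forces $\ell\int V^\tz=0$, hence $\ell=0$ (i.e., $\ak\delta_k\to 0$); the Sobolev equality case together with reflection symmetry and $V(0)=1$ pins $V$ down as $V=U|_{\Rb^N_+}$. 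The convergence (\ref{convum}) then follows by expanding $|\nabla\uk-\nabla\ium|_2^2=\ngukpq-2\int\nabla\uk\cdot\nabla\ium+|\nabla\ium|_2^2$ and using $\ngukpq\to\sndd$, $|\nabla\ium|_2^2\to\sndd$ (since $\pkk\to P\in\partial\Om$ captures half the instanton mass), and the cross term $\to\sndd$ via rescaling. Since $v_k\to U$ locally uniformly and $U$ attains its maximum only at $0$, which lies on $\partial\Rb^N_+$, one has $d(\pkk,\partial\Om)/\delta_k\to 0$; a Hopf-type analysis at the maximum combined with the $C^1$-closeness to the instanton profile then gives $\pkk\in\partial\Om$ for large $k$.

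The main obstacle I foresee is the rigidity argument identifying $V$ and forcing $\ell=0$ simultaneously on a half-space; rather than attempting a direct Liouville-type classification for the perturbed critical equation with Neumann BC, the argument above exploits the sharp Sobolev inequality on $\Rb^N$ via the reflection trick, which yields both conclusions cleanly. A secondary delicate point is the claim $\pkk\in\partial\Om$ exactly (not merely $d(\pkk,\partial\Om)=o(\delta_k)$); the combination of the blow-up profile $V=U$ centered on the boundary with the Neumann condition and a Hopf-lemma argument should settle this.
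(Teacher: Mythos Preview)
Your proposal is correct and shares the paper's overall structure, but differs in one substantive step. For (\ref{sndd}) you and the paper both argue via concentration--compactness in essentially the same way. In the blow-up part, the paper invokes Pohozaev's identity on the limit domain $\Om_\infty$ (this works uniformly for any $L\in[0,+\infty]$) to force $\bar\alpha=0$; the limit equation then reduces to $-\Delta v=v^{\tsu}$, the classical Liouville classification yields $v=U$, after which $L=+\infty$ is excluded by $\int_{\Rb^N}|\nabla U|^2=S^{N/2}>\sndd$, and $0<L<+\infty$ by $v\le v(0)$. You instead exclude $L=+\infty$ first via the Sobolev lower bound, and then on the half-space use reflection together with the equality case of the Sobolev inequality to obtain $\ell=0$, $L=0$, and $V=U$ in one stroke. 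Your route avoids both Pohozaev and any Liouville classification beyond the characterization of Sobolev extremals, which is arguably more elementary; the paper's route is somewhat quicker once Pohozaev is granted and treats all $L$ at once before splitting cases. For the exact claim $\pkk\in\partial\Om$, both you and the paper ultimately defer to the boundary-straightening argument of Lemma~2.2 of~[APY].
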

     {\em Note.\/}  If $\alpha_{0}=+\infty$, Lemma~\ref{infinito} 
     guarantees the conditions
     $S_{\ak}\to\sddn$, $\uk\weak 0$ and $M_{k}\to+\infty$ are 
     satisfied.\\
\begin{proof}  By \rfk, $\uk$ satisfies (\ref{eqn}).
    Since $\uk\weak 0$ in $H^1(\Om)$, $\uk$ is bounded in 
    $H^1(\Om)$.  Therefore, (\ref{limu}), (\ref{limd}), (\ref{limt}) 
    and
    (\ref{auxu}) hold, with equalities in (\ref{auxu}).
    Hence, $\beta(u_{k})\to\sddn$.  From (\ref{iau}), 
    $\delta(u_{k})\to 0$ and
    $$\lim_{k\to\infty}\ak|\uk|_{\tz}^\tz=0,$$  
    as $\uk$ is bounded in 
    $H^1(\Om)$.
    Taking limits 
    in (\ref{eqn}) as $k\to\infty$, 
    \be\label{auxd}||\nu||=||\mu||.\ee
    Combining (\ref{auxu}) and (\ref{auxd}), equalities 
    (\ref{sndd}) follow.   

     We now use the Gidas and Spruck blow up technique~\cite{GS}.
     Let $v_{k}(x):=\delta_{k}^\ndd \uk(\dk x+\pkk)$ for $x\in\Om_{k}:=
     (\Om-\pkk)/\dk$, so that 
     $$
     \left\{\begin{array}{ll}
     -\Delta\vk+a\delta_{k}^2\vk+\ak\dk v_{k}^\tzu=v_{k}^\tsu&\mbox{in\ 
     }\Om_{k},
     \\
     0<\vk\leq\vk(0)=1&\mbox{in\ }\Om_{k},\\
     \frac{\partial \vk}{\partial\nu}=0&\mbox{on\ }\partial\Om_{k}.
     \end{array}\right.
     $$
     
     Rewriting 
     (\ref{mmkk}) in 
     terms of the $\dk$,
     $$
     a\delta_{k}^2+\ak\dk\leq 1.
     $$
     So, we can assume that $P_{k}\to P_{0}$,
     $$\mbox{dist\,}(P_{k},\partial\Om)/\dk\to L\in[0,+\infty],$$ $$
     \ \Om_{k}\to\Om_{\infty}:=\{(\tilde x,x_{N})
     \in\Rb^{N-1}\times\Rb:x_{N}>-L\}$$
     and 
     $\ak\dk\to\bar\alpha$.
     By the elliptic estimates in~\cite{ADN}, 
     \be\label{ctloc}\vk\to v\mbox{\ in\ } 
     C^2_{\mbox{\tiny loc}}(\Om_{\infty})\ee
     where $v$ satisfies
     $$
     \left\{\begin{array}{ll}
     -\Delta v+\bar\alpha v^\tzu=v^\tsu&\mbox{in\ 
     }\Om_{\infty},
     \\
     0<v\leq v(0)=1&\mbox{in\ }\Om_{\infty},\\
     \frac{\partial v}{\partial\nu}=0&\mbox{on\ }\partial\Om_{\infty}
     \end{array}\right.
     $$
     as $a\delta_{k}^2\to 0$.
     By lower 
     semicontinuity of the norm, $v\in L^\ts(\Om_{\infty})$ and
     $\nabla v\in L^2(\Om_\infty)$.  
     So, we can apply
     Pohozaev's identity and get $\bar\alpha=0$, and thus
     $v=U$.  

     If $L=+\infty$, then $\Om_{\infty}=\Rb^{N}$. From (\ref{sndd}),
     $$
     S^\frac N2=\int_{\Rb^{N}}|\nabla 
     U|^2\leq\lim_{k\to\infty}|\nabla\uk|_{2}^2=\sndd,$$
     which is impossible.  

     So $L$ is finite.  This implies that $P_{0}\in\partial\Om$.
     In fact, $L$ has to be zero since $v\leq v(0)$.  
     Using a diffeomorphism to straighten a boundary portion of $\Om$,
     the argument in Lemma 2.2 of [APY] shows that
     $\pkk\in\partial\Om$ for large $k$.  Finally, from (\ref{sndd}), 
     (\ref{ctloc}) and
     $$
     \int_\ttt |\nabla U|^2=\sndd,$$ we deduce (\ref{convum}).
\end{proof}

As in \cite{APY} and \cite{BE},
let $${\cal{M}}:=\{CU_{\eps,y}, C\in\Rb, \eps>0, y\in\partial\Om\}$$
and $d(u,{\cal{M}}):=\inf\{|\nabla (u-V)|_{2}, V\in{\cal{M}}\}$.
The set ${\cal{M}}\setminus\{0\}$ is a manifold of dimension $N+1$.
The tangent space 
$T_{C_{l},\eps_{l},y_{l}}({\cal{M}})$ at $C_{l}U_{\eps_{l},y_{l}}$ is 
given by
$$
T_{C_{l},\eps_{l},y_{l}}({\cal{M}})=\mbox{span\,}\left\{
U_{{\eps},y},C\frac\partial{\partial\eps}U_{{\eps},y},
C\frac\partial{\partial\tau_{i}}U_{{\eps},y}, 1\leq i\leq N-1
\right\}_{(C_{l},\eps_{l},y_{l})}
$$
where $T_{x}(\partial\Om)=\mbox{span}\{\tau_{1},\ldots,\tau_{N-1}\}$.

For large $k$,
the infimum $d(\uk,{\cal{M}})$ is achieved: 
\be\label{m}
d(\uk,{\cal{M}})=|\nabla(\uk-C_{k}U_{\eps_{k},y_{k}})|_{2}
\mbox{\ for\ }
\ck U_{\eps_{k},y_{k}}\in{\cal{M}}.
\ee
Furthermore,
\be\label{ck}
\ck=1+o(1)
\ee
$y_{k}\to P_{0}$ and $\eps_{k}/\delta_{k}\to 1$ (see Lemma 1 of 
\cite{BE} and Lemma 2.3 of \cite{APY}).  From (\ref{akdk}),
\be\label{akek}
\ak\ek\to 0.
\ee

We define $$\wk:=\uk-\ck U_{\eps_{k},y_{k}},$$ 
so that
\be\label{dot}
\int\nabla
U_{\eps_{k},y_{k}}\cdot\nabla w_{k}=0.
\ee
Now, on the one hand, from (\ref{convum}), 
$$
\lim_{k\to\infty}|\nabla(\uk-\ck U_{\eps_{k},y_{k}})|_{2}=0.
$$
On the other hand, from 
Poincar\'{e}'s inequality, and the fact that both the average of
$\uk$ and the average of $\ck U_{\eps_{k},y_{k}}$, 
in $\Om$, converge to zero,
$$
\lim_{k\to\infty}|\uk-\ck U_{\eps_{k},y_{k}}|_{\ts}=0.
$$
Together,
\be\label{zero}
\lim_{k\to\infty}||w_{k}||=0.
\ee

Our next objective is the upper bound in Lemma~\ref{dif} for 
$\int U_{\eps_{k},y_{k}}^{\ts\!-2}\, w_{k}^2$
in terms of 
$|\nabla\wk|_{2}^2+(\tz-1)\ak
    \int U_{\eps_{k},y_{k}}^{\tz\!-2}\, w_{k}^2$.
This will be crucial in the lower bound estimates for the energy in 
Section~\ref{four}.  

The eigenvalue 
problems arising from the linearization of \rfk\ at 
$U_{\eps_{k},y_{k}}$ are related to the
eigenvalue problem in
\begin{lemma}[Bianchi and Egnell~\cite{BE}, Rey~\cite{R}] The 
eigenvalue problem\label{ber} 
$$\left\{
\begin{array}{ll}
    -\Delta \phi=\mu U^\tsd\phi&\mbox{in\ }\ttt,\\
    \frac{\partial\phi}{\partial\nu}=0&\mbox{on\ }\partial\ttt,\\
\int_{\ttt}U^\tsd\phi^2<\infty&
\end{array}\right.
$$
admits a discrete spectrum $\mu_{1}<\mu_{2}
\leq\mu_{3}\leq\ldots$ such 
that $\mu_{1}=1$, $\mu_{2}=\mu_{3}=\ldots=\mu_{N}=\tsu$ and 
$\mu_{N+1}>\tsu$. The eigenspaces $V_{1}$ and $V_{(\tsu)}$, 
corresponding to 1 and $(\tsu)$, are given by
\bas
 V_{1}&=&\mbox{span\ }U,\\
 V_{(\tsu)}&=&\textstyle\mbox{span}\left\{\left.
 \frac{\partial U_{1,y}}{\partial y_{i}}
 \right|_{y=0},\mbox{\ for\ }1\leq i\leq N-1
 \right\}.
\eas
\end{lemma}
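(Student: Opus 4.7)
The plan is to reduce the Neumann eigenvalue problem on $\ttt$ to the analogous spectral problem on all of $\Rb^N$ by reflection, and then to invoke the known spectral decomposition of the latter, as in~\cite{BE,R}.

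The first step uses symmetry. Since $U$ is radial, it is even in $x_N$, so the weight $U^{\tsd}$ is invariant under $(x',x_N)\mapsto (x',-x_N)$. Any Neumann eigenfunction $\phi$ on $\ttt$ therefore extends by even reflection across $\partial\ttt$ to a function $\tilde\phi\in L^2(\Rb^N,U^{\tsd}dx)$ satisfying
$$
-\Delta\tilde\phi=\mu U^{\tsd}\tilde\phi\quad\text{on }\Rb^N,
$$
and conversely every even-in-$x_N$ whole-space eigenfunction restricts to a Neumann eigenfunction on $\ttt$. So the Neumann spectrum is exactly the restriction of the whole-space spectrum to the even-in-$x_N$ subspace.

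The second step establishes the whole-space spectrum. Via the Yamabe-type substitution $u=U\phi$ (corresponding to the conformal change $g_{\mathrm{Euc}}\leadsto U^{\tsd}g_{\mathrm{Euc}}$, which gives the round metric on $S^N$ minus a point), the equation $-\Delta u=\mu U^{\tsd}u$ is conjugate to a linear eigenvalue problem for $-\Delta_{S^N}$. Since that operator has spectrum $\{k(k+N-1)\}_{k\geq 0}$, with spherical harmonics of degree $k$ as eigenspaces, translating back produces $\mu=1$ simple with $U$ as eigenfunction (constants on $S^N$), $\mu=\tsu$ of multiplicity $N+1$ (degree-$1$ spherical harmonics), and all further eigenvalues strictly greater than $\tsu$. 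Differentiating the identity $-\Delta U_{\eps,y}=U_{\eps,y}^{\tsu}$ in the parameters $\eps$ and $y_i$ at $(1,0)$ supplies an explicit basis of the $(\tsu)$-eigenspace.

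Finally, I would intersect with the even-in-$x_N$ subspace to read off the Neumann spectrum. The first eigenfunction $U$ is radial, hence even. Among the basis elements $\partial U_{1,y}/\partial y_i|_{y=0}$ of the $(\tsu)$-eigenspace, those with $i=1,\dots,N-1$ are tangential to $\partial\ttt$ and even in $x_N$, while the normal translation $i=N$ is odd in $x_N$ and is discarded, yielding the stated $V_{(\tsu)}$. The strict inequality $\mu_{N+1}>\tsu$ is then inherited from the fact that all spherical harmonics of degree $\geq 2$ on $S^N$ correspond to eigenvalues strictly larger than $\tsu$. The main obstacle is analytic rather than algebraic: one must set up the conformal identification carefully so that $L^2(\Rb^N,U^{\tsd}dx)\cong L^2(S^N)$ isometrically, which is what forces the spectrum to be discrete (via the compact embedding $H^1(S^N)\hookrightarrow L^2(S^N)$) and legitimizes transferring eigenvalues; once this framework is in place, identifying the eigenfunctions as parameter derivatives of $U_{\eps,y}$ and checking their parity in $x_N$ is routine.
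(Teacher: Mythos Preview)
The paper does not prove this lemma; it simply records it as a result from \cite{BE,R}. Your reflection-to-$\Rb^N$ followed by conformal identification with $S^N$ is exactly the route those references take, so in spirit your outline matches the intended argument.

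There is, however, a bookkeeping gap in your last step that actually matters. On $\Rb^N$ you correctly identify the $(\tsu)$-eigenspace as $(N+1)$-dimensional, spanned by the derivatives of $U_{\eps,y}$ in $\eps$ \emph{and} in $y_1,\ldots,y_N$. When you intersect with the even-in-$x_N$ subspace you discard $\partial U_{1,y}/\partial y_N|_{y=0}$ (odd, correct) but then silently drop $\partial U_{\eps,0}/\partial\eps|_{\eps=1}$ as well. That function is radial, hence even in $x_N$, so it \emph{does} survive to the Neumann half-space problem. Your argument therefore does not establish that the half-space $(\tsu)$-eigenspace is only $(N-1)$-dimensional; carried through honestly it gives an $N$-dimensional eigenspace and $\mu_{N+1}=\tsu$ rather than $\mu_{N+1}>\tsu$.

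This is not a defect in your method---it reflects a slip in the lemma as stated. The correct half-space picture has $\mu_2=\cdots=\mu_{N+1}=\tsu$, with eigenspace spanned by $\partial_\eps U_{\eps,0}|_{\eps=1}$ together with the $N-1$ tangential translations. This is consistent with how the lemma is actually used later (Lemma~\ref{dif}), where one works orthogonally to the full tangent space $T_{1,\eps,y_\eps}({\cal M})$, which does include the $\eps$-direction. So: keep your argument, but track the dilation eigenfunction through the parity step and note the discrepancy with the stated multiplicity.
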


We will consider the eigenvalue 
problems arising from the linearization of \rfk\ at 
$U_{\eps_{k},y_{k}}$.
Let $\eps>0$, $\nu_{\eps}>0$, and $y_{\eps}\in\partial\Om$ with 
$\lim_{\eps\to 0}y_{\eps}=y_{0}$.  Let $\{\phi_{i,\eps}\}_{i=1}^\infty$
be a complete set of orthogonal eigenfunctions with eigenvalues
$\mu_{1,\eps}<\mu_{2,\eps}\leq\mu_{3,\eps}\leq\ldots$ for the weighted
eigenvalue problem
$$\left\{
\begin{array}{ll}
    -\Delta \phi+\nu_{\eps}U^\tzd_{\eps,y_{\eps}} \phi=\mu 
    U_{\eps,y_{\eps}}^\tsd\phi&\mbox{in\ }\Om,\\
    \frac{\partial\phi}{\partial\nu}=0&\mbox{on\ }\partial\Om,\\
\end{array}\right.
$$
with $\phi_{1,\eps}>0$ and
$$
\int_{\Om}U^\tsd\phi_{i,\eps}\phi_{j,\eps}=\delta_{i,j}.
$$
Let
$$
\Om_{\eps}:=(\Om-y_{\eps})/\eps.
$$
The sets $\Om_{\eps}$ converge to a half space as $\eps\to 0$.
For a function $v$ on $\Om$, we 
define $\tilde v$ on $\Om_{\eps}$ by
$$
\tilde v(x):=\eps^{\frac{N-2}2}v(\eps x+y_{\eps}).
$$

The relation between these eigenvalue problems and the one considered 
in Lemma~\ref{ber} is given in

\begin{lemma}\label{ttt}
    Suppose $y_{\eps}\in\partial\Om$, $\lim_{\eps\to 0}y_{\eps}=y_{0}$,
    $\lim_{\eps\to 0}(\eps\nu_{\eps})=0$ and
    the sets\/ $\Om_{\eps}$ converge to $\ttt$.
    Then, up to a subsequence, 
    $$
    \lim_{\eps\to 0}\mu_{i,\eps}=\mu_{i}
    $$
    and
    $$
    \lim_{\eps\to 0}\int_{\Om_{\eps}}U^\tsd(\tilde{\phi}_{i,\eps}
    -\tilde{\phi}_{i})^2=0.
    $$
    The $\mu_{i}$ and $\tilde{\phi}_{i}$ satisfy
    $$\left\{
    \begin{array}{ll}
    -\Delta \tilde{\phi}_{i}=\mu_{i} U^\tsd\tilde{\phi}_{i}&\mbox{in\ 
    }\ttt,\\
    \frac{\partial\tilde{\phi}_{i}}{\partial\nu}=0&\mbox{on\ 
    }\partial\ttt,\\
    \int_{\ttt}U^\tsd\tilde{\phi}_{i}^2=1,&
    \end{array}\right.
    $$
    and the functions $\tilde{\phi}_{i}$ are supposed 
    extended to $\Rb^{N}$ by 
    reflection.
    In particular, from the previous lemma, $\mu_{1}=1$, 
    $\tilde{\phi}_{1}=CU$ for some constant $C>0$,  
    $\mu_{i}=\tsu$ for $2\leq i\leq 
    N$ and $\mu_{N+1}>\tsu$.  Also, 
    $\{\tilde{\phi}_{i}\}_{i=2}^{N}$ is in the span of
    $\left\{{\partial U_{1,y}}/{\partial y_{i}}
    \right|_{y=0},\mbox{\ for\ }1\leq i\leq N-1\}$.
\end{lemma}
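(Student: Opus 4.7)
The plan is to rescale the eigenvalue problem onto $\ome$, use compactness to pass to the limit as $\eps\to 0$, and identify the limit with the problem of Lemma~\ref{ber}. Applying the change of variables $\tilde v(x)=\eps^{(N-2)/2}v(\eps x+y_\eps)$ together with the scaling identities $U_{\eps,y_\eps}^{\tsd}(x)=\eps^{-2}U^{\tsd}((x-y_\eps)/\eps)$ and $U_{\eps,y_\eps}^{\tzd}(x)=\eps^{-1}U^{\tzd}((x-y_\eps)/\eps)$ (valid because $(N-2)(\ts-2)/2=2$ and $(N-2)(\tz-2)/2=1$), the rescaled eigenfunctions $\tilde\phi_{i,\eps}$ satisfy
\begin{equation*}
-\Delta\tilde\phi_{i,\eps}+(\eps\nu_\eps)U^{\tzd}\tilde\phi_{i,\eps}=\mu_{i,\eps}U^{\tsd}\tilde\phi_{i,\eps}\quad\mbox{in }\ome,
\end{equation*}
with Neumann boundary condition on $\partial\ome$, and the normalization becomes $\int_{\ome}U^{\tsd}\tilde\phi_{i,\eps}\tilde\phi_{j,\eps}=\delta_{ij}$. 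Since $\eps\nu_\eps\to 0$ and $\ome\to\ttt$, the formal limit is precisely the problem of Lemma~\ref{ber}.

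Next I would control the eigenvalues uniformly via the min-max principle. Testing with cut-offs of the limit eigenfunctions $\tilde\phi_j$ from Lemma~\ref{ber}, supported in $\ome$ for $\eps$ small, and using that the $\eps\nu_\eps$ term vanishes, yields $\limsup_{\eps\to 0}\mu_{i,\eps}\leq\mu_i$; in particular $\mu_{i,\eps}$ is bounded. Multiplying the rescaled equation by $\tilde\phi_{i,\eps}$ and integrating gives $|\nabla\tilde\phi_{i,\eps}|_2^2\leq\mu_{i,\eps}$ on $\ome$, while the weighted normalization provides $L^2$ control on compact subsets of $\Rb^N$. Along a subsequence, $\mu_{i,\eps}\to\mu_i^*$ and $\tilde\phi_{i,\eps}\weak\tilde\phi_i$ in $H^1_{\mathrm{loc}}$; passing to the limit in the equation (and extending across $\partial\ttt$ by reflection, which respects the Neumann condition), $\tilde\phi_i$ solves the limit eigenvalue problem with eigenvalue $\mu_i^*$.

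To upgrade to the weighted $L^2$ convergence claimed in the statement, I would split $\int_{\ome}U^{\tsd}(\tilde\phi_{i,\eps}-\tilde\phi_i)^2$ over $\{|x|\leq R\}$ and $\{|x|>R\}$. Rellich--Kondrachov compactness handles the bounded piece for fixed $R$; for the tail, the decay $U^{\tsd}(x)=O(|x|^{-4})$ combined with uniform $L^{\ts}_{\mathrm{loc}}$ bounds from Sobolev embedding and H\"older's inequality produces an $\eps$-uniform estimate that vanishes as $R\to\infty$. Passing to the limit in the orthonormality relations then gives $\int_{\ttt}U^{\tsd}\tilde\phi_i\tilde\phi_j=\delta_{ij}$. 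A matching lower bound $\mu_i^*\geq\mu_i$ follows because $\mathrm{span}\{\tilde\phi_1,\ldots,\tilde\phi_i\}$ is admissible in the min-max characterization of $\mu_i$ on $\ttt$ and has maximal Rayleigh quotient $\mu_i^*$. Hence $\mu_i^*=\mu_i$, and Lemma~\ref{ber} identifies the eigenvalues and eigenspaces.

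The main technical obstacle is the uniform tail control on the weighted $L^2$ norms: since the domains $\ome$ grow to the unbounded set $\ttt$, one must rule out concentration of mass escaping to infinity under the rescaling, and this relies critically on the $|x|^{-4}$ decay of $U^{\tsd}$ together with uniform Sobolev-type bounds on the $\tilde\phi_{i,\eps}$ coming from the equation. Once the tail estimate is in place, identification with the spectrum of Lemma~\ref{ber} via orthonormality is routine.
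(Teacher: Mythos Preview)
Your overall strategy---rescale, bound eigenvalues by min--max, extract weak limits, upgrade to weighted $L^2$ convergence, identify the limit via Lemma~\ref{ber}---is the same as the approach the paper invokes (namely the argument of Lemma~3.3 in [APY]). The paper does not give a self-contained proof either; it refers to [APY] together with Lemma~\ref{nove} and Remark~\ref{dez}.

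There is, however, a genuine gap in your tail argument, and it is precisely the point at which the paper inserts Lemma~\ref{nove} and Remark~\ref{dez}. From the rescaled equation you only obtain $\int_{\ome}|\nabla\tilde\phi_{i,\eps}|^2\le\mu_{i,\eps}$; you do \emph{not} get a uniform bound on $|\tilde\phi_{i,\eps}|_{L^{\ts}(\ome)}$, because the Sobolev/Poincar\'e constant on the expanding domains $\ome$ degenerates (equivalently, the average $\bar\phi_{i,\eps}$ of the unrescaled eigenfunction over the fixed domain $\Om$ is not a~priori controlled). Your claim that ``uniform $L^{\ts}_{\mathrm{loc}}$ bounds \ldots\ and H\"older's inequality'' handle $\int_{\{|x|>R\}}U^{\tsd}\tilde\phi_{i,\eps}^2$ therefore does not go through: $L^{\ts}_{\mathrm{loc}}$ bounds give nothing on $\{|x|>R\}$, and the H\"older estimate
\[
\int_{\ome\cap\{|x|>R\}}U^{\tsd}\tilde\phi_{i,\eps}^2
\le\Bigl(\int_{\{|x|>R\}}U^{\ts}\Bigr)^{2/N}\,|\tilde\phi_{i,\eps}|_{L^{\ts}(\ome)}^2
\]
requires exactly the global $L^{\ts}$ bound you do not have. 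Concretely, a constant sequence $\phi_\eps\equiv c_\eps$ with $c_\eps^2\int_\Om U_{\eps,y_\eps}^{\tsd}=1$ has $c_\eps\sim\eps^{-1}$, and for such a sequence the rescaled $L^{\ts}(\ome)$ norm blows up; nothing in your outline rules out a comparable constant component in $\phi_{i,\eps}$.

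The paper's fix is to work in the original variables: write $\phi_{i,\eps}=\bar\phi_{i,\eps}+\eta_\eps$ with $\eta_\eps$ of zero mean, use Poincar\'e on the fixed $\Om$ to bound $|\eta_\eps|_{\ts}$ by $|\nabla\phi_{i,\eps}|_2$, and then estimate the three pieces of $\int U_{\eps,y_\eps}^{\tsd}(\bar\phi_{i,\eps}+\eta_\eps)^2$ separately using the integral estimates (\ref{um})--(\ref{tres}). This is the content of Lemma~\ref{nove} and Remark~\ref{dez}; it yields $\bar\phi_{i,\eps}\sqrt{\eps}\to 0$ and, after rescaling, that the constant part contributes $o(1)$ to the weighted norm. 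Once this is in place, your compactness and min--max steps are correct and the identification with the spectrum of Lemma~\ref{ber} follows as you describe.
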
 
The proof of Lemma~\ref{ttt} is a consequence of the arguments in the 
proof of Lemma~3.3 of [APY], of Lemma~\ref{nove} 
and of Remark~\ref{dez}.  For the details we refer to the proof of
Lemma~5.5 of \cite{PG} for parameter $s$ there
equal to one.
\begin{lemma}\label{nove} Suppose $y_{\eps}\in\bar\Om$,
     $\phi_{\eps}\in H^1(\Om)$,
    $$\left\{
    \begin{array}{l}\displaystyle
    \int U^\tzd_{\eps,y_{\eps}}\phi_{\eps}^2\to 0,\\ \\
    \displaystyle
    \int|\nabla\phi_{\eps}|^2\to 0,
    \end{array}\right.
    $$
    as $\eps\to 0$.
Then
$$
    \int U^\tsd_{\eps,y_{\eps}}\phi_{\eps}^2\to 0,
$$ 
    as $\eps\to 0$.
\end{lemma}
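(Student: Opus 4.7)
The plan is to split $\phi_\eps=\bar\phi_\eps+\psi_\eps$ with $\bar\phi_\eps:=|\Om|^{-1}\int\phi_\eps$ and $\psi_\eps$ of zero mean, since only the oscillating part is directly controlled by $|\nabla\phi_\eps|_{2}$. By the Poincar\'e--Wirtinger--Sobolev inequality, $|\psi_\eps|_{\ts}\le C|\nabla\phi_\eps|_{2}\to 0$. For $q\in\{\tzd,\tsd\}$, H\"older with exponents $N/2$ and $N/(N-2)$ then gives
\[
\int U_{\eps,y_\eps}^{q}\psi_\eps^2\le |U_{\eps,y_\eps}^q|_{N/2}\,|\psi_\eps|_{\ts}^2.
\]
After the rescaling $z=(x-y_\eps)/\eps$ one has $|U_{\eps,y_\eps}^{\tsd}|_{N/2}^{N/2}=\int_{\Om_\eps}U^{\ts}\le S^{N/2}$, so this factor is bounded; and a similar computation gives $|U_{\eps,y_\eps}^{\tzd}|_{N/2}^{N/2}=O(\eps^{N/2}\log(1/\eps))\to 0$. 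In particular, the $\psi_\eps$-contribution vanishes in both the hypothesized and the target integrals.

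The main obstacle is that $\bar\phi_\eps$ is not a priori bounded, so its size must be extracted from the first hypothesis. Setting $A_\eps:=\int U_{\eps,y_\eps}^{\tzd}$, $B_\eps:=\int U_{\eps,y_\eps}^{\tzd}\psi_\eps$, and $C_\eps:=\int U_{\eps,y_\eps}^{\tzd}\psi_\eps^2$, Cauchy--Schwarz gives $B_\eps^2\le A_\eps C_\eps$ and hence, by AM--GM, $|2\bar\phi_\eps B_\eps|\le\tfrac12\bar\phi_\eps^2 A_\eps+2C_\eps$. Expanding $\phi_\eps^2=\bar\phi_\eps^2+2\bar\phi_\eps\psi_\eps+\psi_\eps^2$ in the hypothesis and using $C_\eps\to 0$ from the previous step produces
\[
\tfrac12\bar\phi_\eps^2 A_\eps\le\int U_{\eps,y_\eps}^{\tzd}\phi_\eps^2+C_\eps\to 0,
\]
so that $\bar\phi_\eps^2 A_\eps\to 0$.

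The last step is a scale comparison. A direct computation from the explicit formulas $U_{\eps,y_\eps}^{\tzd}(x)=C_N\eps/(C_N\eps^2+|x-y_\eps|^2)$ and $U_{\eps,y_\eps}^{\tsd}(x)=C_N^2\eps^2/(C_N\eps^2+|x-y_\eps|^2)^2$ yields $A_\eps\sim c_1\eps$ and, for $N\ge 5$, $A'_\eps:=\int U_{\eps,y_\eps}^{\tsd}\le c_2\eps^2$. Therefore $A'_\eps/A_\eps=O(\eps)=o(1)$, and so $\bar\phi_\eps^2 A'_\eps=(\bar\phi_\eps^2 A_\eps)\cdot(A'_\eps/A_\eps)\to 0$. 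Repeating the Cauchy--Schwarz/AM--GM argument of the previous paragraph with exponent $\tsd$ in place of $\tzd$ finally gives
\[
\int U_{\eps,y_\eps}^{\tsd}\phi_\eps^2\le 2\bar\phi_\eps^2 A'_\eps+2\int U_{\eps,y_\eps}^{\tsd}\psi_\eps^2\to 0,
\]
which is the desired conclusion. The crux is the scale comparison $A'_\eps/A_\eps=o(1)$, and this is precisely where the hypothesis $N\ge 5$ is used.
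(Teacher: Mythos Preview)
Your proof is correct and follows the same overall skeleton as the paper's: split $\phi_\eps$ into its mean $\bar\phi_\eps$ and a zero-mean part, control the latter in $L^{2^*}$ via Poincar\'e--Sobolev, and extract the size of $\bar\phi_\eps$ from the first hypothesis. The differences are in the execution, and yours is somewhat slicker.

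For the cross term the paper uses H\"older with carefully chosen exponents to get $|\int U^{2^\#-2}\bar\phi_\eps\eta_\eps|\le C|\eta_\eps|_{2^*}|\bar\phi_\eps|\eps$, and then argues in two steps: a quadratic inequality first shows $\bar\phi_\eps\sqrt\eps$ is bounded, and feeding this back in gives $\bar\phi_\eps\sqrt\eps\to 0$. Your Cauchy--Schwarz in the weighted space $L^2(U^{2^\#-2}dx)$ together with AM--GM collapses this to the single line $\tfrac12\bar\phi_\eps^2 A_\eps\le\int U^{2^\#-2}\phi_\eps^2+C_\eps\to 0$. For the target integral the paper estimates the cross term $\zeta_\eps=\int U^{2^*-2}\bar\phi_\eps\eta_\eps$ by H\"older again, which forces a case distinction $N=5$, $N=6$, $N\ge 7$ (coming from comparing $\tfrac{8N}{(N-2)(N+2)}$ with $\tfrac{N}{N-2}$). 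Your scale comparison $A'_\eps/A_\eps=O(\eps)$, obtained directly from the explicit formulas, sidesteps this case analysis entirely and makes transparent that $N\ge 5$ is exactly what is needed for $\int U_{\eps,y_\eps}^{2^*-2}=O(\eps^2)$. Note that the paper's standing estimate for $|U_{\eps,y}|_q^q$ with $q<N/(N-2)$ is stated only for $q\ge 1$, so your direct computation of $A'_\eps$ from the explicit form of $U_{\eps,y}^{2^*-2}$ is the right move for $N\ge 7$.
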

\begin{proof}
    We denote the average of
    $\pk$ in $\Om$ by $\pb$.  By Poincar\'{e}'s inequality,
    $$|\pk-\pb|_{\ts}\to 0.
    $$
    The limits in this proof are taken as $\eps\to 0$.
    So we can write $\pk=\pb+\et$, with $\et\to 0$ in $L^\ts$.
    We know that 
    $$\int\uzz(\pbq+2\pb\et+\etq)=o(1).$$  
    We have the following estimates
    for the three terms on the 
    left hand side:
    $$
    \int\uzz\pbq\geq b\pbq\eps,
    $$
    for some $b>0$, and 
    \bas
    \left|\int\uzz\et\pb\right|&\leq&|\et|_{\ts}|\pb|\left(\int 
    U_{\eps,y_{\eps}}^{{\frac 
    {N}{N-2}}{\frac{4}{N+2}}}\right)^\frac {N+2}{2N}\\
    &\leq& C|\et|_{\ts}|\pb|\eps,
    \eas
    by (\ref{um}); and
    \ba
    \int\uzz\etq&\leq&|\et|_{\ts}^2\left(\int U_{\eps,y_{\eps}}^{\frac 
    {N}{N-2}}\right)^\frac 2N\nonumber\\
    &\leq& C|\et|_{\ts}^2\eps|\log\eps|^\frac 2N,\label{iuzd}
    \ea
    by (\ref{dois}). (Inequalities (\ref{um}), (\ref{dois}) and 
    (\ref{tres}) are in the beginning of the next section.)
    Thus
    $$
    b\pbq\eps\leq C|\pb|\eps+o(1).$$
       This shows that $\pb\sqrt{\eps}$ is bounded.  
       But if $\pb\sqrt{\eps}$ is 
    bounded this shows that 
    \be\label{pe}\pb\sqrt{\eps}\to 0.\ee 
    
    We want to prove that 
    $$\int\uss(\pbq+2\pb\et+\etq)=o(1).$$  For the first term on the 
    left hand side we have, by (\ref{um}) and then (\ref{pe}),
    $$
    \int\uss\pbq\leq C\pbq\eps^2\to 0.
    $$  
    For the third term we have
    $$
    \int\uss\etq\leq C|\et|_{\ts}^2\to 0.
    $$
       We claim that the remaining term also converges to zero.  This 
    will prove the lemma.  For the second term we have the estimate
    $$
    \zeta_{\eps}:=
    \left|\int\uss\pb\et\right|\leq|\et|_{\ts}|\pb|\left(\int
    U_{\eps,y_{\eps}}^{\frac 
    N{N-2}\frac{8}{N+2}}\right)^{\frac{N+2}{2N}}.
    $$
    If $N=5$, by (\ref{tres}),
    $$
    \zeta_{\eps}\leq C|\et|_{\ts}|\pb|\eps^{N\left(1-{\frac 
    {4}{N+2}}\right){\frac{N+2}{2N}}}\leq 
    C|\et|_{\ts}|\pb|\eps^{\frac{N-2}{2}}=
    C|\et|_{\ts}|\pb|\eps^{\frac 32}.
    $$
    If $N=6$, by (\ref{dois}),
    $$
    \zeta_{\eps}\leq C|\et|_{\ts}|\pb|\eps^2|\log\eps|^\frac{2}{3}.
    $$
    Finally, if $N\geq 7$, by (\ref{um}),
    $$\zeta_{\eps}\leq C|\et|_{\ts}|\pb|\eps^2.$$ 
    In all three cases, (\ref{pe}) implies that $\zeta_{\eps}\to 0$.
    \end{proof}
\begin{remark}\label{dez}
    If in the previous lemma, instead of assuming
    $\int|\nabla\phi_{\eps}|^2\to 0$, we assume that
    $\int|\nabla\phi_{\eps}|^2$ is bounded, then we can still conclude
    $\bar{\phi}_{\eps}\sqrt{\eps}\to 0$ and
    $\int U_{\eps,y_{\eps}}^{\ts-2}\lb\pbq+2\pb\eta_{\eps}
    \rb=
    \int_{\Om_{\eps}} U^{\ts-2}\lb
    \tilde{\bar{\phi}}_{\eps}^2+2\tilde{\bar{\phi}}_{\eps}\tilde\eta_{\eps}
    \rb
    \to 0$, as $\eps\to 0$.
\end{remark}
    
Using Lemma~\ref{ttt} and the arguments in the proof of Lemma 3.4 of [APY],
we deduce
\begin{lemma}\label{dif}
    Suppose $y_{\eps}\in\partial\Om$, $\lim_{\eps\to 0}y_{\eps}=y_{0}$
    and
    $\lim_{\eps\to 0}(\eps\nu_{\eps})=0$.
    There 
    exists a constant $\gamma_{1}>0$ such that, for sufficiently 
    small $\eps$,
    $$|\nabla w|_{2}^2+\nu_{\eps}
    \int U_{\eps,y_{\eps}}^\tzd w^2
    \geq(\tsu+\gamma_{1})
    \int U_{\eps,y_{\eps}}^\tsd w^2+O(\eps^2||w||^2)
    $$
    for $w$ orthogonal to $T_{1,\eps,y_{\eps}}({\cal{M}})$.
\end{lemma}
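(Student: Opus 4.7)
The plan is to diagonalize the quadratic form $Q(w):=|\nabla w|_2^2+\nu_\eps\int U_{\eps,y_\eps}^{\tzd}w^2$ via the weighted eigenvalue problem introduced just before Lemma~\ref{ttt}, and then use the orthogonality $w\perp T_{1,\eps,y_\eps}({\cal M})$ to show that the components of $w$ along the first $N$ eigenfunctions contribute only $O(\eps^2\|w\|^2)$. Let $\{\phi_{i,\eps}\}$ be the eigenfunctions with eigenvalues $\mu_{1,\eps}<\mu_{2,\eps}\leq\cdots$, normalized by $\int U_{\eps,y_\eps}^{\tsd}\phi_{i,\eps}\phi_{j,\eps}=\delta_{ij}$, and expand $w=\sum_i a_i\phi_{i,\eps}$ with $a_i=\int U_{\eps,y_\eps}^{\tsd}w\,\phi_{i,\eps}$. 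Then
$$Q(w)=\sum_i\mu_{i,\eps}\,a_i^2\qquad\text{and}\qquad\int U_{\eps,y_\eps}^{\tsd}w^2=\sum_i a_i^2.$$

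By Lemmas~\ref{ber} and~\ref{ttt}, $\mu_{N+1,\eps}\to\mu_{N+1}>\tsu$, so for sufficiently small $\eps$ there exists $\gamma_1>0$ with $\mu_{i,\eps}\geq\tsu+\gamma_1$ for every $i\geq N+1$. The decisive step is the estimate
$$|a_i|\leq C\eps\,\|w\|\qquad\text{for }i=1,\ldots,N.$$
Granting it,
$$Q(w)\geq(\tsu+\gamma_1)\sum_{i\geq N+1}a_i^2\geq(\tsu+\gamma_1)\sum_i a_i^2-C\eps^2\|w\|^2,$$
which is exactly the inequality to be proved.

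To establish the bound on the $a_i$, I would follow the scheme of Lemma~3.4 of [APY]. Lemma~\ref{ttt} says that, after rescaling, $\phi_{i,\eps}$ converges in the weighted $L^2$-norm on $\ome$ to a limit eigenfunction $\tilde\phi_i$ on $\ttt$, with $\tilde\phi_1=CU$ and $\tilde\phi_2,\ldots,\tilde\phi_N$ in the span of the tangential translation derivatives. Undoing the rescaling, $\phi_{1,\eps}$ is close, in the weighted $L^2$-norm $\int U_{\eps,y_\eps}^{\tsd}(\cdot)^2$, to a multiple of $U_{\eps,y_\eps}$, while each $\phi_{i,\eps}$ ($2\leq i\leq N$) is close to a linear combination of the $\partial_{\tau_j}U_{\eps,y_\eps}$. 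The orthogonality conditions encoded in $w\perp T_{1,\eps,y_\eps}({\cal M})$, combined with $-\Delta U_{\eps,y_\eps}=U_{\eps,y_\eps}^{\tsu}$ on $\Rb^N$ and standard control of the boundary contribution (thanks to $y_\eps\in\partial\Om$), then translate into $|a_i|=O(\eps\|w\|)$.

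The main obstacle is upgrading the qualitative $o(1)$ convergence provided by Lemma~\ref{ttt} to the quantitative rate $O(\eps)$ required by the displayed bound on the $a_i$. This is precisely the content of the blow-up argument of Lemma~3.4 of [APY], to be carried out here using the $C^2_{\text{loc}}$ convergence of the rescaled eigenfunctions, the averaging device of Lemma~\ref{nove} together with Remark~\ref{dez}, and a boundary-flattening diffeomorphism identifying $\ome$ with $\ttt$ in the limit. The quadratic remainder $O(\eps^2\|w\|^2)$ appearing in the statement of the lemma is the square of this $O(\eps)$ rate.
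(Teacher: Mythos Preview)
Your proposal is correct and follows essentially the same approach as the paper, which itself gives no detailed proof but simply says the lemma follows from Lemma~\ref{ttt} together with the arguments in the proof of Lemma~3.4 of [APY]. Your outline---spectral decomposition with respect to the weighted eigenproblem, the gap $\mu_{N+1,\eps}\to\mu_{N+1}>\tsu$ from Lemma~\ref{ttt}, and the $O(\eps\|w\|)$ bound on the first $N$ Fourier coefficients coming from the orthogonality $w\perp T_{1,\eps,y_\eps}({\cal M})$ via the [APY] argument---is precisely that route.
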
  

\section{Nonexistence of least energy solutions}\label{four}

In this section we prove (ii) of Theorem~\ref{theorem}.
The idea of the proof is to obtain a lower bound for $I_{\alpha}$ and show 
that if $\alpha_{0}$, defined in (\ref{min}),
is infinite, then the least energy 
solutions $u_{k}$ of \rfk\ have energy $I_{\ak}(\uk)>
\sddn$, for large $\alpha_k$.  This is impossible.  
Therefore $\alpha_{0}$ is finite.
By Corollary~\ref{coro}, (ii) of Theorem~\ref{theorem} 
follows.

Assume $$\uk=C_{k}U_{\eps_{k},y_{k}}+w_{k},$$ (\ref{sndd}),
(\ref{m}), (\ref{ck}), (\ref{akek}) and (\ref{zero}).
  From 
    (\ref{iau}), $I_{\alpha}$ has the lower bound
    \be\label{lb}
    I_{\alpha}\geq \beta\left(1+\frac 4\tz\delta\right)
    \ee
    (this is also checked in (\ref{lbd}) of Appendix A).
We will expand $\beta$ and $\delta$ 
to second order around $U_{\eps_{k},y_{k}}$.  We start by 
deriving estimates for the 
terms that appear in this expansion.

We recall, from Br\'{e}zis and Nirenberg~\cite{BN}, that, for 
$y\in\bar\Om$, there exist positive
constants $c_{1}$ and $c_{2}$ such that:\\
if $1\leq q<\frac N{N-2}$, then
\be\label{um} c_{1}\eps^{q\left(\frac{N-2}{2}\right)}
\leq |U_{\eps,y}|_{q}^q\leq
c_{2}\eps^{q\left(\frac{N-2}{2}\right)};\ee
if $q=\frac N{N-2}$, then
\be\label{dois} c_{1}\eps^{\frac{N}{2}}|\log\eps|
\leq |U_{\eps,y}|_{q}^q\leq
c_{2}\eps^{\frac{N}{2}}|\log\eps|;\ee
and if $\frac N{N-2}<q\leq\frac {2N}{N-2}$, then
\be\label{tres} c_{1}\eps^{N\left(1-\frac{q}{\ts}\right)}\leq 
|U_{\eps,y}|_{q}^q\leq
c_{2}\eps^{N\left(1-\frac{q}{\ts}\right)}.\ee

For brevity, we shall write $$U_{k}:=U_{\eps_{k},y_{k}}.$$

{\em Estimate for $\nukq$:\/}  For $N\geq 5$ , $\frac N{N-2}<2$.  From 
(\ref{tres}), 
\be\label{nukq}\nukq=O(\ek^2).\ee

{\em Estimate for $\iukz$:\/} Since $y_{k}\in\partial\Om$
and we are supposing that the domain is smooth,
\ba\iukz 
&=&\frac{\tz B(N)\ek}2+o(\ek),\label{iukz}
\ea
with
\bas
B(N)&=&
\frac 1\tz\int_{\Rb^{N}}U^\tz\\
&=&\omega_{N}\frac{1}{2^{N}}\sqrt{\pi}\,
\frac{\Gamma\left(\frac{N}{2}\right)}{\Gamma\left(\frac{N+1}{2}\right)}
[N(N-2)]^\frac{N}{2},
\eas
as proved in Appendix B. Here $\omega_{N}$
is the volume of the $N-1$ dimensional unit sphere.

{\em Estimate for $\ngukq$ and for $\iuks$:\/} From Adimurthi and 
Mancini \cite{AM}, since $N\geq 5$,
\be\label{ngukq}
    \ngukq=\sndd-\bar{C}_{1}\ek+O(\ek^2)
\ee
and
\be\label{iuks}
    \iuks=\sndd-\bar{C}_{2}\ek+O(\ek^2),
\ee
where
$$
\bar{C}_{1}=H(y_{k})
\frac{\omega_{N-1}(N-2)^2}
{4}
\frac{\Gamma\left(\frac{N+3}2\right)
\Gamma\left(\frac{N-3}2\right)}{\Gamma(N)}[N(N-2)]^{\frac{N-2}2}
$$
and
$$
\bar{C}_{2}=H(y_{k})\frac{\omega_{N-1}}
{4}\frac{\Gamma\left(\frac{N+1}2\right)
\Gamma\left(\frac{N-1}2\right)}{\Gamma(N)}[N(N-2)]^{\frac N2}.
$$
Here $H(y_{k})$ denotes the mean curvature
of $\partial\Om$  at $y_{k}$ 
with respect to the unit outward normal
and, as above, $\omega_{N}$
is the volume of the $N-1$ dimensional unit sphere.
This yields
\be\label{ei}
\frac{\ngukq}{\nuksq}=\frac{S}{2^\frac 
2N}-2^{\frac{N-2}N}SH(y_{k})
A(N)\ek+O(\eps_{k}^2)
\ee
with
\be\label{aden}
A(N)=
\frac{2}{N}\frac{\omega_{N-1}}{\omega_{N}}
\frac{\Gamma\left(\frac{N+1}2\right)\Gamma\left(\frac{N-3}2\right)}
{\Gamma\left(\frac{N}2\right)\Gamma\left(\frac{N-2}2\right)}
=
{\textstyle
\frac{N-1}{N}}\frac{1}{\sqrt{\pi}}\,
\frac{\Gamma\left(\frac{N-3}{2}\right)}{\Gamma\left(\frac{N-2}{2}\right)}.
\ee
To justify the last equality we recall that if $\omega_{N}(r)$ 
is the volume 
of the $N-1$ dimensional sphere with radius $r$, then
$$
\omega_{N}(r)=\int_{0}^\pi\omega_{N-1}(r\sin\phi)\,rd\phi=
r^{N-1}\omega_{N-1}(1)\int_{0}^\pi\sin^{N-2}\phi\,d\phi,
$$
which yields
$$
\frac{\omega_{N-1}}{\omega_{N}}=
\frac{\omega_{N-1}(1)}{\omega_{N}(1)}=
\frac{1}{\sqrt{\pi}}\,
\frac{\Gamma\left(\frac{N}{2}\right)}{\Gamma\left(\frac{N-1}{2}\right)}.
$$
We mention that the Talenti instanton 
we use does not coincide with the one in 
\cite{AM}.  Denoting the Talenti instanton in Adimurthi and Mancini by $V$,
$V(\ \cdot\ )=U((N(N-2))^{1/2}\ \cdot\ )$.

{\em Estimate for $\iukw$:}
\begin{lemma}\label{uw}
\be
    \left|
    \iukw
    \right|\leq\left\{
    \begin{array}{ll}
        O\left(\eps_k^\frac 32\nwk\right)&\mbox{if\ }N=5,\\
        O\left(\eps_k^2|\log\ek|^\frac 23\nwk\right)&\mbox{if\ }N=6,\\
        O\left(\eps_k^2\nwk\right)&\mbox{if\ }N\geq 7.
    \end{array}
    \right.
\ee
\end{lemma}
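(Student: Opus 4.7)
The plan is to estimate the integral by Hölder's inequality with exponents $\ts$ and its conjugate $\frac{2N}{N+2}$, combined with the Sobolev embedding $H^{1}(\Om) \hookrightarrow L^{\ts}(\Om)$. This immediately yields $\left|\iukw\right| \leq |\Uk|_{\frac{2N}{N+2}}\, |\wk|_{\ts} \leq C\, |\Uk|_{\frac{2N}{N+2}}\, \nwk$, so the problem reduces to obtaining a dimension-dependent bound for $|\Uk|_{\frac{2N}{N+2}}$ in terms of $\ek$. Note that the orthogonality relation (\ref{dot}) is not used here; the lemma is really just an integral estimate for the product $\Uk\wk$.

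The next step is to observe that the exponent $q := \frac{2N}{N+2}$ crosses the critical value $\frac{N}{N-2}$ that separates the three regimes of the Brézis–Nirenberg integrability estimates~(\ref{um}), (\ref{dois}), (\ref{tres}) exactly at $N=6$. A direct algebraic check shows that $q<\frac{N}{N-2}$ iff $N<6$, $q=\frac{N}{N-2}$ iff $N=6$, and $q>\frac{N}{N-2}$ for $N\geq 7$; the condition $q\leq\ts$ holds trivially. This three-case split is precisely the source of the three regimes appearing in the statement of the lemma.

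Finally, a short computation in each case gives the correct power of $\ek$. For $N=5$, (\ref{um}) yields $|\Uk|_{10/7}^{10/7} \leq c\,\ek^{15/7}$, hence $|\Uk|_{10/7} \leq c\,\ek^{3/2}$. For $N=6$, (\ref{dois}) with $q=3/2$ gives $|\Uk|_{3/2}^{3/2} \leq c\,\ek^{3}|\log\ek|$, hence $|\Uk|_{3/2} \leq c\,\ek^{2}|\log\ek|^{2/3}$. For $N\geq 7$, using $1-q/\ts = \frac{4}{N+2}$, (\ref{tres}) yields $|\Uk|_{q}^{q} \leq c\,\ek^{4N/(N+2)}$, hence $|\Uk|_{q} \leq c\,\ek^{2}$. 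Combining each of these with the Hölder–Sobolev bound above produces the three inequalities of the lemma. There is no real obstacle here; the only point meriting some care is identifying the appropriate Hölder exponent and then tracking how the regime of $|\Uk|_{q}$ shifts as $N$ varies across the threshold $N=6$.
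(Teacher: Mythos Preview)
Your proof is correct and follows essentially the same approach as the paper: apply H\"older's inequality with exponents $\ts$ and $\frac{2N}{N+2}$, bound $|\wk|_{\ts}$ by $C\nwk$, and then estimate $|U_k|_{2N/(N+2)}$ using the Br\'ezis--Nirenberg bounds (\ref{um}), (\ref{dois}), (\ref{tres}) according to whether $N=5$, $N=6$, or $N\geq 7$. The computations you carry out in each case match those in the paper.
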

\begin{proof}
    $$\left|\iukw\right|\leq\nwks\left(\int U_{k}^\frac{2N}{N+2}
    \right)^\frac{N+2}{2N}.$$
    If $N=5$, then $\frac{2N}{N+2}<\frac N{N-2}$. By~(\ref{um}),
    $$\left|\iukw\right|\leq C\nwk\eps_{k}^{\frac{N-2}2}=
    O\left(\eps_k^\frac 32\nwk\right).$$
    If $N=6$, then $\frac{2N}{N+2}=\frac N{N-2}$. By~(\ref{dois}),
    $$\left|\iukw\right|\leq C\nwk\left(\eps_{k}^{
    \frac 
    N2}|\log\ek|\right)^\frac{N+2}{2N}=O\left(\eps_{k}^2|\log\ek|^\frac 
    23\nwk\right).$$
    If $N\geq 7$, then $\frac N{N-2}<\frac{2N}{N+2}$. By~(\ref{tres}),
$$\left|\iukw\right|\leq C\nwk\left(\eps_{k}^{N\left(
1-\frac{2N}{N+2}\frac{N-2}{2N}\right)}\right)^\frac{N+2}{2N}=O
\left(\eps_{k}^2\nwk
\right).$$
\end{proof}

{\em Estimate for $\iusu$:\/} From [APY], Equations (3.15), for $N\geq 
5$,
\be\label{iusu}
\iusu=O(\ek\nwk).\ee

{\em Estimate for $\iuzu$:\/} Since $\frac{2N}{N+2}>1$,
\ba\label{iuzu}
\iuzu&\leq&\nwks\left(\int 
U_{k}^{\frac{N}{N-2}\frac{2N}{N+2}}\right)^\frac{N+2}{2N}\nonumber\\
&\leq&C\nwks\eps_{k}^{\left(1-{\frac{N}{N+2}}
\right){\frac{N+2}{2}}}\nonumber\\
&=&O(\ek\nwk).
\ea

{\em Estimate for $\iusd$:\/} 
\be\label{iusd}
\iusd=O(\nwkq).
\ee

Now we will obtain a lower bound for $I_{\ak}(\uk)$.
Let $v_{k}=\uk/\ck=
U_{k}+\tilde{w}_{k}=U_{k}+\wk/\ck$.  Because of (\ref{ck}), 
the sequence $(v_{k})$ 
satisfies (\ref{sndd}) and the sequence $\tilde{w}_{k}$ 
satisfies (\ref{zero}).
Of course, $d(v_{k},M)$ is achieved by $\Uk$.
Because $I$ is 
homogeneous of degree zero, $I_{\ak}(\uk)=I_{\ak}(v_{k})$.  
We will compute 
$I_{\ak}(v_{k})$ but we will still call $v_{k}$ by $\uk$, and 
$\tilde{w}_{k}$
by $\wk$. 

Going back to (\ref{lb}), $I_{\alpha}(u_{k})$ is bounded below 
by the sum of 
$\beta(u_{k})$ and $\frac{4}{\tz}\beta(u_{k})\delta(u_{k})$.  
We start by obtaining lower bounds 
for $\beta(u_{k})$ and $\frac{4}{\tz}\beta(u_{k})\delta(u_{k})$ 
separately.
The expression for $\beta(u_{k})$ involves two terms: $\nhukpq$ 
and $\nukpss$.
The first one is obviously
\begin{eqnarray}
    \nhukpq&=&\pnhukq+2{\txt\left(\igukw+a\iukw\right)} 
    +\ \pnhwkq\label{A}\\
    &=&A_{1}+A_{2}+A_{3}.\nonumber
\end{eqnarray}

For the second term we use
\begin{lemma}[{[APY] Lemma 3.5}]\label{expansion} Let $q>1$ and $L$ be a 
non negative integer with $L\leq q$.  Let $V$ and $\omega$ be 
measurable functions on $\Om$ with $V\geq 0$ and $V+\omega\geq 0$.
Then
\bas
\int (V+\omega)^q&=&\sum_{i=0}^{L}\frac{q(q-1)\ldots(q-i+1)}{i!}\int 
V^{q-i}\omega^i\\
&&+\ O\left(\int[V^{q-r}|\omega|^r+|\omega|^q]\right),
\eas
where $r=\min\{L+1,q\}$.
\end{lemma}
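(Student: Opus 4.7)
The plan is to reduce the integral estimate to a pointwise algebraic inequality and then integrate. Specifically, I would establish the pointwise bound
$$
\Bigl|(V+\omega)^q - \sum_{i=0}^{L}\binom{q}{i}V^{q-i}\omega^i\Bigr| \le C_{q,L}\bigl(V^{q-r}|\omega|^r + |\omega|^q\bigr)
$$
at every point of $\Om$, with $\binom{q}{i}=q(q-1)\cdots(q-i+1)/i!$ and $r=\min\{L+1,q\}$. The conclusion of the lemma then follows by integration over $\Om$ and the triangle inequality. The hypotheses $V\ge 0$ and $V+\omega\ge 0$ guarantee that $(V+\omega)^q$ is well defined, and the restriction $L\le q$ ensures $q-i\ge 0$ in every term of the sum.

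For the pointwise inequality I split $\Om$ into the sets $\{V=0\}$ and $\{V>0\}$. On $\{V=0\}$ the constraint forces $\omega\ge 0$, only the term corresponding to $i=q$ (which arises only when $L=q\in\mathbb{N}$) can contribute to the sum, and the bound follows by direct inspection. On $\{V>0\}$ I factor out $V^q$ by setting $t:=\omega/V\in[-1,+\infty)$, reducing the claim to the one-variable inequality $|f(t)|\le C(|t|^r+|t|^q)$ for $f(t):=(1+t)^q-\sum_{i=0}^{L}\binom{q}{i}t^i$. On the range $|t|\le 1/2$ I apply Taylor's theorem with Lagrange remainder to $g(t):=(1+t)^q$ at the origin: the $(L{+}1)$-st derivative $g^{(L+1)}(t)=q(q-1)\cdots(q-L)(1+t)^{q-L-1}$ is bounded there because $q-L\ge 0$ and $1+t\ge 1/2$, yielding $|f(t)|\le C|t|^{L+1}$, which is dominated by $C'|t|^r$ since $L\le q$ and $|t|\le 1/2$. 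On $|t|\ge 1/2$ I bound $(1+t)^q\le 3^q|t|^q$ (using $1+|t|\le 3|t|$) and each $\binom{q}{i}t^i$ with $0\le i\le L$ by $C(|t|^L+1)\le C''|t|^q$, using $L\le q$ together with $1\le 2^q|t|^q$. Multiplying back by $V^q$ turns $|t|^r$ into $V^{q-r}|\omega|^r$ and $|t|^q$ into $|\omega|^q$.

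I expect the only delicate point to be handling non-integer $q$: then $(1+t)^q$ is not globally smooth and its derivatives blow up at $t=-1$, but the hypothesis $L\le q$ keeps the $(L{+}1)$-st derivative bounded on the Taylor window $|t|\le 1/2$ (where $1+t\ge 1/2$), so the Lagrange remainder estimate is uniform in $t$. At the boundary value $t=-1$ the inequality is trivial since $(1+t)^q=0$. Everything else is routine; the constant hidden in the $O(\cdot)$ of the stated conclusion comes from the bounds on $g^{(L+1)}$ and on the binomial coefficients $\binom{q}{i}$ for $0\le i\le L$.
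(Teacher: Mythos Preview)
The paper does not supply its own proof of this lemma: it is quoted verbatim from Adimurthi--Pacella--Yadava \cite{APY} (their Lemma~3.5) and used immediately afterwards without argument. So there is no in-paper proof to compare against.

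Your proposed argument is correct and is in fact the standard way to prove such an estimate. The reduction to the one-variable inequality for $f(t)=(1+t)^q-\sum_{i=0}^{L}\binom{q}{i}t^i$ is the right move, and your two-region treatment is sound: on $|t|\le 1/2$ the Lagrange remainder gives $|f(t)|\le C|t|^{L+1}\le C|t|^{r}$ (since $r\le L+1$ and $|t|\le 1$), and on $|t|\ge 1/2$ both $(1+t)^q$ and each polynomial term are bounded by a constant times $|t|^q$. The handling of the set $\{V=0\}$ is also fine; when $q$ is a positive integer with $L=q$ the binomial theorem makes the difference vanish identically, while otherwise every term $V^{q-i}\omega^i$ with $i\le L<q$ vanishes and the difference reduces to $|\omega|^q$. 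One small cosmetic point: in the case $L=q\in\mathbb{N}$ the $(L{+}1)$-st derivative of $(1+t)^q$ is identically zero, so the Taylor step gives $f\equiv 0$ directly and you need not worry about the exponent $q-L-1=-1$ there.
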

Taking $L=2$ and $q=\ts$,
\be\label{BB}\!\!\!\!\!\!\!\!\!\!\!\!
\nukpss\!=\iuks+\ts\iusu+{\txt\frac{\ts(\tsu)}{2}}\iusd+O(\nwkr),
\ee
where $r=\min\{\ts,3\}$, i.e., $r=3$ if $N=5$, and $r=\ts$ if $N>5$.
The inequality
\be\label{ineq}
(1+z)^{-\eta}\geq 1-\eta z,
\ee
for $\eta>0$ and $z\geq -1$, implies
\begin{eqnarray}
    |u_{k}|_{\ts}^{-2}&\geq&\nuksmd
    \left(1-\frac{2\iusu}{\nukss}
    \right.\nonumber\\ &&\qquad\qquad\ \ 
    \left.-
    \frac{{(\tsu)}\iusd}{\nukss}+O(\nwkr)\right)\label{B}\\
    &=&B_{1}+B_{2}+B_{3}+B_{4}.\nonumber
\end{eqnarray}

Let
$$l:=\ql.$$
From (\ref{ngukq}) and (\ref{iuks}),
\be\label{ql}
l=1+O(\ek).
\ee

Using (\ref{A}) and (\ref{B}),
we can write,  
$$\beta(u_{k})\geq \bar I_{1}+\bar I_{2}+\bar I_{3}+\bar I_{4},$$
where
\begin{eqnarray*}
    \bar I_{1}&=&\frac{\nhukq}{\nuksq} 
    \\
&=&A_{1}B_{1} 
\\
    \bar I_{2}&=&\frac{2}{\nuksq}\left[
    \igukw+a\iukw-l\iusu
    \right]\\
    \\
    &=&A_{2}B_{1}+A_{1}B_{2} 
    \\
    \bar I_{3}&=&\frac{1}{\nuksq}\left[
    \nhwkq-l(\tsu)\iusd
    \right]\\
    \\
    &=&A_{3}B_{1}+A_{1}B_{3} 
    \\
\noalign{\mbox{and}}
\bar I_{4}&=&[(A_{1}+A_{3})B_{4}]+[A_{2}(B_{2}+B_{3}+B_{4})]+A_{3}B_{2}+
A_{3}B_{3}\\
&=&E_{1}+E_{2}+E_{3}+E_{4}.
\end{eqnarray*}

By (\ref{nukq}) and (\ref{iuks}),
$$
 \bar I_{1}=\frac{\ngukq}{\nuksq}+
o(\ek).
$$

We recall (\ref{zero}), $\wk\to 0$ in $H^1(\Om)$.

By (\ref{dot}), the first of the four terms in $\bar I_{2}$ is zero;
by Lemma~\ref{uw} and by (\ref{iusu}) the second and the third ones
are $o(\ek)$: 
$$
\bar I_{2}=o(\ek).
$$

By (\ref{iuks}), (\ref{iusd}) and (\ref{ql}),
$$
  \bar I_{3}=2^{\frac{N-2}N}S^{\frac{2-N}{2}}\left[
    \nhwkq-(\tsu)\iusd\right]
$$

The term $E_{1}$ is $o(\nwkq)$ because $B_{4}$ is $o(\nwkq)$.
The term $E_{2}$ is $o(\ek)$ because, from Lemma~\ref{uw},
$A_{2}$ is $o(\ek)$.
The term $E_{3}$ is $o(\ek)$ because, from (\ref{iusu}),
$B_{2}$ is $o(\ek)$.
Finally, the term $E_{4}$ is $o(\nwkq)$ because 
both $A_{3}$ and $B_{3}$ are 
$O(\nwkq)$.
Therefore, $$\bar I_{4}=o(\ek)+o(\nwkq).$$

Combining the expressions for $\bar I_{1}$, $\bar I_{2}$,
$\bar I_{3}$ and $\bar I_{4}$,
\bas \beta(u_{k})
    &=& 
    \frac{\ngukq}{\nuksq}+
    2^{\frac{N-2}N}S^{\frac{2-N}{2}}\left[
    \pnhwkq
        -(\tsu)\iusd
    \right]\\ &&+\ o(\ek)+o\left(\nwkq\right)\\
    &\geq&\frac{\ngukq}{\nuksq}+
    2^{\frac{N-2}N}S^{\frac{2-N}{2}}\left[
    \gamma_{2}\pnhwkq
        -(\tsu)\iusd
    \right]+o(\ek),
\eas
for any fixed number $\gamma_{2}<1$, because $a>0$. 
This is our lower bound for $\beta(u_{k})$.

Now we turn to the term
$\frac 4\tz\beta(u_{k})\delta(u_{k})$ and write
\be\label{prod}\frac 4\tz\beta(u_{k})\delta(u_{k})=\frac 2\tz
\frac{\pnhukpqmeio}{|u_{k}|_{\ts}^{2+\ts\!/2}}\ak\iukpz
\ee

We obtain a lower bound for $\pnhukpqmeio$ from (\ref{A}).  Using
(\ref{dot}), (\ref{nukq}), (\ref{ngukq}) and Lemma~\ref{uw},
$$
\pnhukpqmeio\geq \left(\sndd\right)^\frac{1}{2}+O(\ek)+O(\nwkq).
$$

We obtain a lower bound for $|u_{k}|_{\ts}^{-(2+\ts\!/2)}$ from 
(\ref{BB}). Using  (\ref{iuks}), (\ref{iusu}), (\ref{iusd}) and 
(\ref{ineq}),
$$
|u_{k}|_{\ts}^{-(2+\ts\!/2)}
\geq\left(\sndd\right)^{-\frac{1}{2}-\frac{2}{\ts}}+
O(\ek)+O(\nwkq).
$$

For the product we obtain the lower bound
\ba
\frac{\pnhukpqmeio}{|u_{k}|_{\ts}^{2+\ts\!/2}}&\geq&
2^{\frac{N-2}{N}}S^{\frac{2-N}{2}}+O(\ek)+O(\nwkq)\label{fum}\\
&=&D_{1}+D_{2}+D_{3}.\nonumber
\ea

To estimate the term $\ak\iukpz$ we do not use Lemma~\ref{expansion}
because it would 
give rise to a term $O\left(\ak\nwk^\tz\right)$, for 
which we do not have estimates.  Instead we use this calculus
\begin{lemma}\label{calculus}
    Let $\eta>2$.  For any $z\geq-1$,
    \be\label{calc}
    \frac{\eta(\eta-1)}2 z^2-\ct|z|+1\leq(z+1)^\eta,
    \ee
    where $\ct=1+\eta(\eta-1)/2$.
\end{lemma}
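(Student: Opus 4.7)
The plan is to rewrite the claim as $f(z) := (1+z)^{\eta} - 1 - \frac{\eta(\eta-1)}{2}z^{2} + \tilde{C}\,|z| \geq 0$ on $[-1,+\infty)$ and to split into the two natural cases $z \geq 0$ and $-1 \leq z \leq 0$. Note first that the particular value $\tilde{C} = 1 + \eta(\eta-1)/2$ is precisely what makes the inequality tight at the endpoint $z = -1$ (where the right-hand side of \eqref{calc} equals $0 = (0)^{\eta}$), so there is no slack to throw away at that boundary point.

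For $z \geq 0$, I would invoke the elementary Taylor/convexity bound
\[
(1+z)^{\eta} \;\geq\; 1 + \eta z + \tfrac{\eta(\eta-1)}{2}z^{2} \qquad (\eta > 2,\ z \geq 0),
\]
which follows by differentiating the difference twice and noting it vanishes at $0$ together with its first derivative. Since $\eta z \geq 0 \geq -\tilde{C}z$ in this range, the desired inequality \eqref{calc} is immediate.

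The substantive case is $-1 \leq z \leq 0$. Writing $y = -z \in [0,1]$ and
\[
g(y) := (1-y)^{\eta} - 1 - \tfrac{\eta(\eta-1)}{2}y^{2} + \tilde{C}\,y,
\]
I would verify the boundary values $g(0) = 0$ and $g(1) = -1 - \frac{\eta(\eta-1)}{2} + \tilde{C} = 0$ (this is exactly the choice of $\tilde{C}$), and then compute
\[
g''(y) \;=\; \eta(\eta-1)\bigl[(1-y)^{\eta-2} - 1\bigr] \;\leq\; 0 \qquad \text{for } y\in[0,1],\ \eta>2,
\]
since $(1-y)^{\eta-2} \leq 1$. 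Thus $g$ is concave on $[0,1]$ with $g(0) = g(1) = 0$, so $g$ lies above the secant joining these two points, namely the line $\equiv 0$; hence $g(y) \geq 0$ throughout $[0,1]$, which is the required inequality after reverting to $z = -y$.

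Since the heavy lifting is done by a two-line concavity argument on $[0,1]$ and an even shorter Taylor bound on $[0,+\infty)$, there is essentially no obstacle. The only point one must be careful about is that $\tilde{C}$ is calibrated exactly so that $g(1)=0$; any smaller constant would cause the inequality to fail near $z=-1$, and this is what makes Taylor expansion alone (which would yield an error term of the form $O(|z|^{3})$ that is not controllable up to $z=-1$) insufficient, forcing the use of the linear penalty $\tilde{C}|z|$.
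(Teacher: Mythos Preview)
Your argument is correct and essentially the same as the paper's. The paper observes that the difference $(z+1)^\eta-\bigl[\tfrac{\eta(\eta-1)}{2}z^2-\tilde C|z|+1\bigr]$ vanishes at $z=-1$ and $z=0$, is concave on $(-1,0)$ (your $g''\le 0$ computation), and is increasing for $z>0$; your Taylor bound for $z\ge 0$ is just a repackaging of that monotonicity, since both reduce to the same second-derivative sign $\eta(\eta-1)[(1+z)^{\eta-2}-1]\ge 0$.
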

\begin{proof}
The difference between the right hand side and the left hand side is 
zero for $z=-1$ and $z=0$.  It is increasing for $z>0$ and concave for
$-1<z<0$.
\end{proof}
((\ref{calc}) also hold for $\eta=2$, with equality for negative values 
of $z$.)

As a consequence of Lemma~\ref{calculus},
$$
    \iukpz\geq\iukz-\tz\cc\iuzu+{\txt\frac{\tz(\tzu)}{2}}\iuzd,
$$
with
$$
    \hat{C}:=\frac{\tilde{C}}{\tz}=\frac{1}{\tz}+\frac{\tzu}{2}.
$$
Using (\ref{iukz}) and (\ref{iuzu}),
\ba\label{F}
\frac{2}{\tz}\ak\iukpz&\geq&B(N)\ak\ek+(\tzu)\ak\iuzd+o(\ak\ek)\\
&=&F_{1}+F_{2}+F_{3}.\nonumber
\ea

We will now substitute (\ref{fum}) and (\ref{F}) in (\ref{prod}).
On the one hand, $$(D_{1}+D_{2}+D_{3})F_{3}=o(\ak\ek)$$ 
and
$$
(D_{2}+D_{3})F_{1}=o(\ak\ek).
$$
On the other hand, by (\ref{iuzd}),
$$
D_{2}F_{2}=O\left(\ak\eps_{k}^2|\log\ek|^{\frac{2}{N}}
\nwkq\right)=o(\ak\ek).
$$
So,
\bas
\frac 4\tz\beta(u_{k})\delta(u_{k})&\geq&
2^{\frac{N-2}{N}}S^{\frac{2-N}{2}}B(N)\ak\ek\\  &&
+\ 2^{\frac{N-2}{N}}S^{\frac{2-N}{2}}(\tzu)\ak\iuzd\\
&&+\ O\left(\nwkq\right)\ak\iuzd+o(\ak\ek)\\
&\geq&
2^{\frac{N-2}{N}}S^{\frac{2-N}{2}}\left[B(N)\ak\ek+
\gamma_{2}(\tzu)\ak\iuzd\right]\\
&&+\ o(\ak\ek),
\eas
for any fixed number $\gamma_{2}<1$. 
This is our lower bound for $\frac 4\tz\beta(u_{k})\delta(u_{k})$.

Combining the lower bounds for $\beta(u_{k})$ and for 
$\frac 4\tz\beta(u_{k})\delta(u_{k})$,
\bas
    I_{\ak}(\uk)&\geq&
    \frac{\ngukq}{\nuksq}+2^{\frac{N-2}{N}}S^{\frac{2-N}{2}}B(N)\ak\ek\\
    &&+\ 
    2^{\frac{N-2}N}S^{\frac{2-N}{2}}\left[\gamma_{2}
    \pnhwkq+\gamma_{2}(\tzu)\ak\iuzd
    \right.\\
    &&\left.\qquad -\ (\tsu)\iusd\right]+o(\ak\ek).
\eas
From Lemma~\ref{dif}, the term inside the square parenthesis is greater 
than
$$
    \left[\left(
    \gamma_{2}-\frac{(\tsu)}{(\tsu)+\gamma_{1}}\right)\left(\pnhwkq
    +(\tzu)\ak\iuzd\right)+o(\eps_{k})
    \right].
$$
Choosing $\gamma_{2}\geq\frac{(\tsu)}{(\tsu)+\gamma_{1}}$, yields
that this term is greater than $o(\eps_{k})$.  Hence,
$$
I_{\ak}(\uk)\geq\frac{\ngukq}{\nuksq}+2^{\frac{N-2}N}S^\frac{2-N}{2}
B(N)\ak\ek+o(\ak\ek).
$$
Substituting (\ref{ei}) into this expression, we obtain
\bas
I_{\ak}(\uk)&\geq&
\frac{S}{2^\frac 2N}
+2^{\frac{N-2}N}S^\frac{2-N}{2}B(N)\ak\ek\left[1-
S^{\frac N2}\frac{A(N)}{B(N)}H(y_{k})\frac 1\ak+o(1)
\right]\\
&>&\frac{S}{2^\frac 2N},
\eas
for large $k$.

So assume $\alpha_{0}$, in (\ref{min}),  is $+\infty$.  Choose a
    sequence $\alpha_{k}\to+\infty$ as $k\to+\infty$ and denote by $\uk$ 
    a minimizer for $I_{\alpha_{k}}$ satisfying \rfk.  From 
    Lemmas~\ref{infinito} and \ref{instantao}, the conditions 
    (\ref{sndd}), (\ref{m}), 
    (\ref{ck}), (\ref{akek}) and (\ref{zero}) hold.  Therefore 
    $S_{\ak}=I_{\ak}(\uk)>\sddn$ for large $k$, which is impossible.
    By Corollary~\ref{coro}, this 
    establishes (ii) of Theorem~\ref{theorem}. 
    
\begin{remark}
    Since $S^\frac{N}{2}=\int_{\Rb^{N}} U^\ts=\omega_{N}
    \frac{1}{2^{N}}\sqrt{\pi}
    \frac{\Gamma\left(\frac{N}{2}\right)}{\Gamma\left(\frac{N+1}{2}\right)}
    [N(N-2)]^\frac{N}{2}$, it follows that
    $$B(N)=S^{\frac N2}.$$
    Using 
    $$
    \omega_{N}=\frac{2\pi^\frac{N}{2}}{\Gamma\lb
    \frac{N}{2}\rb},$$ the common 
    value is
    $$B(N)=S^{\frac N2}=\frac{\pi^\frac{N+1}{2}}{2^{N-1}}
    \frac{1}{\Gamma\lb\frac{N+1}{2}\rb}[N(N-2)]^\frac{N}{2}.
    $$
\end{remark}
    
\section{Least energy solutions of \rfz}

    In this section we give a lower bound for 
    $\alpha_{0}=\min\left\{\alpha\,|\;S_{\alpha}=S/2^\frac{2}{N}\right\}$,
    and give partial results concerning existence of 
    least energy solutions of \rfz.
        
    From (\ref{iau}) we obtain 
\begin{lemma}\label{lemmau}
    There exists a constant 
    $\bar{c}>
    \frac{4}{(\tz)^{{2}/{N}}}$ such that
    \be\label{up}I_{\alpha}\leq\beta\left(1+\frac{4}{\tz}
    \delta+\bar{c}\delta^2\right).\ee
\end{lemma}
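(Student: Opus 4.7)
The plan is to reduce the inequality to a one-variable calculus problem. By the explicit formula (\ref{iau}), $I_\alpha = \beta\, F(\delta)$, where
$$F(\delta) := \frac{1}{(\tz)^{2/N}}\Bigl[(\delta+\sqrt{\delta^2+1})^N + \frac{\ts}{2}(\delta+\sqrt{\delta^2+1})^{N-2}\Bigr]^{2/N}.$$
Since $\beta > 0$ on $H^1(\Om)\setminus\{0\}$, the inequality (\ref{up}) is equivalent to finding $\bar{c} > 4/(\tz)^{2/N}$ such that $F(\delta) \leq 1 + (4/\tz)\delta + \bar{c}\delta^2$ for every $\delta \geq 0$.

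First I would verify that $F$ and the quadratic on the right agree to first order at $\delta=0$. Setting $g(\delta) := \delta + \sqrt{\delta^2+1}$, one has $g(0) = g'(0) = 1$. The identity $1 + \ts/2 = \tz$ yields $F(0) = 1$, and direct differentiation using $(N-2)\ts/2 = N$ gives
$$F'(0) = (\tz)^{-2/N}\cdot \frac{2}{N}\cdot (\tz)^{(2-N)/N}\cdot 2N = \frac{4}{\tz}.$$
Consequently $H(\delta) := F(\delta) - 1 - (4/\tz)\delta$ satisfies $H(0) = H'(0) = 0$, so $H(\delta)/\delta^2$ extends continuously to $\delta = 0$.

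Next I would analyze the behavior at infinity. From $\sqrt{\delta^2+1} = \delta + O(\delta^{-1})$ one obtains $g(\delta) = 2\delta + O(\delta^{-1})$, and therefore
$$g(\delta)^N + \frac{\ts}{2}g(\delta)^{N-2} = (2\delta)^N\bigl[1 + O(\delta^{-2})\bigr],$$
so $F(\delta) = 4\delta^2/(\tz)^{2/N} + O(\delta)$ and
$$\lim_{\delta \to +\infty}\frac{H(\delta)}{\delta^2} = \frac{4}{(\tz)^{2/N}}.$$
Hence $\delta \mapsto H(\delta)/\delta^2$ is continuous on $[0,+\infty)$ with finite limits at both endpoints, and therefore bounded. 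Any choice of $\bar{c}$ strictly greater than both $\sup_{\delta>0}H(\delta)/\delta^2$ and $4/(\tz)^{2/N}$ — for instance their sum plus one — delivers the required estimate.

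No step is genuinely difficult; the only point requiring care is the differentiation establishing $F'(0) = 4/\tz$, which cleanly cancels the constant and linear parts and makes the quadratic comparison meaningful. After that, the argument is a routine continuity/compactness statement on $[0,+\infty)$.
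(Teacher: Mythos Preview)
Your proof is correct and follows essentially the same route as the paper: reduce to the one-variable function $F(\delta)=I_\alpha/\beta$, match the value and first derivative at $\delta=0$ to kill the constant and linear terms, read off the leading quadratic coefficient $4/(\tz)^{2/N}$ from the asymptotics as $\delta\to\infty$, and invoke continuity/compactness in between. The paper additionally computes $F''(0)$ and splits $[0,\infty)$ into three explicit regions, whereas you package the same content as ``$H(\delta)/\delta^2$ is continuous on $[0,\infty)$ with finite limits at both ends''; this is a cosmetic difference only (one minor slip: your remainder in the large-$\delta$ expansion is $O(1)$ rather than $O(\delta)$, but this does not affect the limit).
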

\begin{proof}
    Consider $\Lambda:[0,+\infty[\to\Rb$, defined by 
    $$\Lambda(\bd):=\frac{1}{\left(\tz\right)^{\frac{2}{N}}}
     \left[\left(\bd+\sqrt{\bd^2+1}\right)^{N}+
         \frac\ts{2\;}\left(\bd+\sqrt{\bd^2+1}\right)^{N-2}
         \right]^\frac 2N.$$
         Since $\left.\frac{\partial}{\partial\bd}\sqrt{\bd^2+1}
         \right|_{\bd=0}=0$ and 
         $\left.\frac{\partial}{\partial\bd}\frac{1}{\sqrt{\bd^2+1}}
         \right|_{\bd=0}=0$,
         the first two derivatives of $\Lambda$ at zero are
         $$
         \Lambda'(0)=\frac{1}{\left(\tz\right)^\frac 2N}
         \frac{2}{N}(\tz)^{\frac{2}{N}-1}\left[N+\frac{\ts}{2}(N-2)\right]=
         \frac{4}{\tz}
         $$
         and
         $$
         \Lambda^{\prime\prime}(0)=
         {\textstyle\frac{1}{\left(\tz\right)^\frac{2}{N}}\frac{2}{N}
         \left(\frac{2}{N}-1\right)(\tz)^{\frac{2}{N}-2}(2N)^2+\frac{2}{\tz}
         [N+(N-2)]=\frac{4}{\tz}\frac{2N-3}{N-1}}.
         $$
         Fix any number $c_{1}>\frac{2}{\tz}\frac{2N-3}{N-1}$.  
         There exists an $\epsilon>0$ such that 
         (\ref{up}) holds for $\bar{c}=c_{1}$ and $0\leq\bd<\epsilon$.
         
         Fix any number $c_{2}>\frac{4}{(\tz)^{{2}/{N}}}$.  From 
         (\ref{iau}), there exists 
         an $L>0$ such that (\ref{up}) holds for $\bar{c}=c_{2}$ and
         $\bd>L$.
         
         The inequalities $\frac{2}{\tz}\frac{2N-3}{N-1}<
         \frac{4}{\tz}<\frac{4}{(\tz)^{{2}/{N}}}$ show that
         $\max\left\{\frac{2}{\tz}\frac{2N-3}{N-1},\right.$
         $\left.\frac{4}{(\tz)^{{2}/{N}}}
         \right\}=\frac{4}{(\tz)^{{2}/{N}}}$.
         
         By taking $\bar{c}\geq\max\{c_{1},c_{2}\}$, $\bar{c}$ 
         sufficiently large,
         we can guarantee (\ref{up}) for all $\bd\in[\epsilon,L]$.
\end{proof}
\begin{lemma}
    If 
    $\alpha<{A(N)}\max_{\partial\Om}H$, 
    then $S_{\alpha}<\sddn$.
\end{lemma}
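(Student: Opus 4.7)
The plan is to establish the strict inequality by testing $I_\alpha$ at the rescaled instanton $U_{\eps, P}$, with $P \in \partial\Om$ chosen so that $H(P) = \max_{\partial\Om} H$, and then letting $\eps \to 0^+$. Since $U_{\eps, P} \in H^1(\Om) \setminus \{0\}$, one has $S_\alpha \leq I_\alpha(U_{\eps, P})$ for every $\eps > 0$, so it suffices to exhibit a single $\eps$ for which $I_\alpha(U_{\eps, P}) < S/2^{2/N}$.

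First I would apply the upper bound from Lemma~\ref{lemmau},
$$I_\alpha(U_{\eps, P}) \leq \beta(U_{\eps, P}) + \tfrac{4}{\tz}\,\beta(U_{\eps, P})\,\delta(U_{\eps, P}) + \bar{c}\,\beta(U_{\eps, P})\,\delta(U_{\eps, P})^2,$$
and expand each factor to first order in $\eps$. For $\beta$, the contribution $a|U_{\eps, P}|_2^2$ to $\|U_{\eps, P}\|^2$ is $O(\eps^2)$ by (\ref{nukq}); combined with (\ref{ei}), this yields
$$\beta(U_{\eps, P}) = \frac{S}{2^{2/N}} - 2^{(N-2)/N}\, S\, A(N)\, H(P)\,\eps + O(\eps^2).$$

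For $\delta$ I would use the expansion (\ref{iukz}) of $|U_{\eps, P}|_{\tz}^{\tz}$, together with the leading-order values $\|U_{\eps, P}\|^2,\ |U_{\eps, P}|_{\ts}^{\ts} \to S^{N/2}/2$ (from (\ref{ngukq}), (\ref{iuks}) and (\ref{nukq})) and the identity $B(N) = S^{N/2}$ recorded in the remark at the end of Section~\ref{four}, to obtain
$$\delta(U_{\eps, P}) = \frac{\alpha\,\tz}{2}\,\eps + o(\eps).$$
In particular the $\bar{c}\,\beta\,\delta^2$ term is $O(\eps^2)$ and absorbs into $o(\eps)$. Assembling the first-order terms,
$$I_\alpha(U_{\eps, P}) \leq \frac{S}{2^{2/N}} + 2^{(N-2)/N}\, S\, A(N)\Bigl[\frac{\alpha}{A(N)} - H(P)\Bigr]\eps + o(\eps).$$

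By the choice of $P$ and the hypothesis $\alpha < A(N)\max_{\partial\Om} H$, the bracketed quantity is strictly negative, so for all sufficiently small $\eps > 0$ the right-hand side is strictly less than $S/2^{2/N}$, yielding $S_\alpha < S/2^{2/N}$. The only real hazard in the computation is arithmetic: one must verify that the negative $\eps$-term coming from $\beta$ (carrying $A(N)H(P)$) and the positive $\eps$-term coming from $\tfrac{4}{\tz}\beta\delta$ (carrying $\alpha$) end up with the same prefactor $2^{(N-2)/N}S$, so that they merge cleanly into the single factor $\alpha - A(N)H(P)$. Apart from this bookkeeping, the argument is a direct assembly of the asymptotic estimates already collected in Section~\ref{four}.
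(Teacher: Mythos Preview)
Your proposal is correct and follows essentially the same route as the paper: choose $P$ maximizing $H$, apply the upper bound of Lemma~\ref{lemmau}, and insert the first-order expansions of $\beta(U_{\eps,P})$ and $\delta(U_{\eps,P})$ coming from (\ref{nukq}), (\ref{ei}), (\ref{iukz}) and (\ref{ngukq})--(\ref{iuks}). The only cosmetic difference is that you invoke $B(N)=S^{N/2}$ while expanding $\delta$, whereas the paper carries $B(N)$ through and uses that identity only in the final line; either way the competing $\eps$-coefficients combine to $2^{(N-2)/N}S\bigl(\alpha-A(N)H(P)\bigr)$, which is negative under the hypothesis.
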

\begin{proof}
    Choose $P\in\partial\Om$ such that $H(P)=\max_{\partial\Om}H$. 
    From (\ref{nukq}) and (\ref{ei}),
    $$\beta(U_{\eps,P})=\frac{S}{2^\frac 
2N}-2^{\frac{N-2}N}SH(P)
A(N)\eps+o(\eps),$$ 
whereas, from (\ref{delta}) and (\ref{nukq})-(\ref{iuks}),
$$
\delta(U_{\eps,P})=\frac{2}{S^\frac{N}{2}}
\frac{\tz}{4}B(N)\alpha\eps+o(\eps).
$$
The previous lemma implies that
\bas
S_{\alpha}&\leq&I_{\alpha}(U_{\eps,P})\\ &\leq&
\frac{S}{2^\frac 2N}
-2^{\frac{N-2}N}S^\frac{2-N}{2}B(N)\alpha\eps\left[
S^{\frac N2}\frac{A(N)}{B(N)}H(P)\frac 1\alpha-1+o(1)
\right]\\
&=&
\frac{S}{2^\frac 2N}
-2^{\frac{N-2}N}S\alpha\eps\left[
{A(N)}H(P)\frac 1\alpha-1+o(1)
\right]
\eas
as $\eps\to 0$.  Since, by assumption, 
$\alpha<A(N)\max_{\partial\Om}H=A(N)H(P)$, $S_{\alpha}<\sddn$.
\end{proof}
\begin{corollary}\label{asndd}
    $\!\!$The value $\alpha_{0}$ is greater than or equal to 
    ${A(N)}\max_{\partial\Om}H$.
\end{corollary}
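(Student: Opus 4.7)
The plan is to derive this corollary directly from the previous lemma together with the definition of $\alpha_0$ given in (\ref{min}) and the monotonicity properties of $\alpha \mapsto S_\alpha$ established earlier. The previous lemma gives a sufficient condition for the strict inequality $S_\alpha < S/2^{2/N}$ to hold, and $\alpha_0$ is defined as the smallest threshold at which equality occurs, so the translation between these two statements is essentially immediate.

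Specifically, I would argue by contrapositive. Suppose $\alpha < A(N)\max_{\partial\Omega} H$. The previous lemma then yields $S_\alpha < S/2^{2/N}$. By the definition (\ref{min}) of $\alpha_0$, any $\alpha$ with $S_\alpha < S/2^{2/N}$ either lies in $[0, \alpha_0)$ (when $\alpha_0$ is finite) or any nonnegative real (when $\alpha_0 = +\infty$); in both cases $\alpha < \alpha_0$. Equivalently, if $\alpha \geq \alpha_0$, then $S_\alpha = S/2^{2/N}$ (using that $\alpha \mapsto S_\alpha$ is nondecreasing and bounded above by $S/2^{2/N}$), which forces $\alpha \geq A(N)\max_{\partial\Omega} H$ by the contrapositive of the previous lemma. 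Taking the infimum over such $\alpha$ gives $\alpha_0 \geq A(N)\max_{\partial\Omega} H$.

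There is no real obstacle here, since the hard analytic work has already been done in the preceding lemma (the energy expansion of $U_{\eps,P}$ and the upper bound for $I_\alpha$ from Lemma~\ref{lemmau}). The only thing to be mildly careful about is ensuring the argument makes sense in both cases $\alpha_0 < +\infty$ and $\alpha_0 = +\infty$, but in the latter case the inequality $\alpha_0 \geq A(N)\max_{\partial\Omega} H$ is trivial. A one- or two-line proof should therefore suffice.
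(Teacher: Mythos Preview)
Your proposal is correct and matches the paper's approach: the paper states this corollary with no proof at all, treating it as an immediate consequence of the preceding lemma and the definition of $\alpha_0$ in (\ref{min}), which is exactly what you spell out.
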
    

We let $|\Om|$ denote the Lebesgue measure of $\Om$.
By testing $I_{\alpha}$ with constant functions we obtain
\begin{lemma}\label{amax} 
    $\!\!$If\ $a\leq\frac{S}{(2|\Om|)^\frac{2}{N}}$, then $\alpha_{0}
    \geq\max\left\{
    \alpha\in
    [0,+\infty[\,\left|I_{\alpha}(1)\leq\sddn\right.\right\}$.
\end{lemma}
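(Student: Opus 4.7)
The idea is to test the functional $I_\alpha$ with the constant function $u\equiv 1$, for which every integral is explicit. A direct computation gives $\|1\|^2=a|\Om|$ and $|1|_{\ts}^2=|\Om|^{2/\ts}$, so $\beta(1)=a|\Om|^{1-2/\ts}=a|\Om|^{2/N}$; similarly $|1|_{\tz}^\tz=|\Om|$ and (\ref{delta}) yields $\delta_\alpha(1)=\alpha/(2\sqrt a)$. The hypothesis $a\leq S/(2|\Om|)^{2/N}$ is precisely $\beta(1)\leq\sddn$, and since $I_0=\beta$ (the $\delta=0$ case of (\ref{iau})) this means $I_0(1)\leq\sddn$. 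In particular, the set
$$
\Sigma:=\{\alpha\in[0,+\infty[\;:\;I_\alpha(1)\leq\sddn\}
$$
contains $0$.

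Next I would observe that $\alpha\mapsto\delta_\alpha(1)$ is continuous, strictly increasing and tends to $+\infty$, while by (\ref{iau}) the quantity $I_\alpha(1)$ is a strictly increasing continuous function of $\delta_\alpha(1)$ (with $\beta(1)$ held fixed). Hence $\alpha\mapsto I_\alpha(1)$ is continuous, strictly increasing, and unbounded as $\alpha\to+\infty$. Consequently $\Sigma$ is a closed interval $[0,\alpha^\ast]$, and the maximum appearing in the statement of the lemma is attained and equals $\alpha^\ast$.

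To conclude $\alpha_0\geq\alpha^\ast$, I would fix any $\tilde\alpha\in[0,\alpha^\ast[$ and exploit the \emph{strict} monotonicity just established:
$$
S_{\tilde\alpha}\leq I_{\tilde\alpha}(1)<I_{\alpha^\ast}(1)\leq\sddn.
$$
By the definition (\ref{min}) of $\alpha_0$ this forces $\tilde\alpha<\alpha_0$. Letting $\tilde\alpha\nearrow\alpha^\ast$ gives $\alpha^\ast\leq\alpha_0$. The argument is short; the only delicate point is ensuring \emph{strict} (rather than weak) inequality $S_{\tilde\alpha}<\sddn$ before invoking the definition of $\alpha_0$, which is precisely why the strict monotonicity of $\alpha\mapsto I_\alpha(1)$ must be used instead of merely the bound $I_{\alpha^\ast}(1)\leq\sddn$.
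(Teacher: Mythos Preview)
Your proof is correct and follows exactly the approach the paper indicates: the paper simply states the lemma after the sentence ``By testing $I_{\alpha}$ with constant functions we obtain'' and records the explicit value of $I_\alpha(1)$, without spelling out the details. Your write-up fills these in correctly, including the point that strict monotonicity of $\alpha\mapsto I_\alpha(1)$ is needed to get the strict inequality $S_{\tilde\alpha}<\sddn$ required by the definition of $\alpha_0$.
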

{\em Note.} The value of $I_{\alpha}(1)$ is
$$
\textstyle
I_{\alpha}(1)=\frac{|\Om|^\frac{2}{N}}{(\tz)^\frac{2}{N}}
\left[
\lb\frac{\alpha+\sqrt{\alpha^2+4a}}{2}\rb^{N}+
\frac{\ts}{2\,}a\lb\frac{\alpha+\sqrt{\alpha^2+4a}}{2}\rb^{N-2}
\right]^\frac{2}{N}.$$
    
    {\em We have not determined the exact value of $\alpha_{0}$.} 
    However, using the ideas of Chabrowski and Willem~\cite{CW},
    we have the following proposition concerning existence of 
    least energy solutions for $\alpha=\alpha_{0}$:
\begin{proposition}\label{aaz}
    If $\alpha_{0}>\azz$ then there exists a least energy solution of 
    \rfz.
\end{proposition}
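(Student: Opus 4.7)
\noindent\textit{Proof proposal.} The plan is to take minimizers along $\ak\nearrow\alpha_{0}$ and show the limit achieves $S_{\alpha_{0}}$. By Corollary~\ref{coro}, choose minimizers $\uk$ of $I_{\ak}$ normalized by $\nukps=1$. Lemma~\ref{cont} together with~(\ref{min}) gives $I_{\ak}(\uk)=S_{\ak}\to\sddn=S_{\alpha_{0}}$, and~(\ref{beta}) bounds $(\uk)$ in $H^{1}(\Om)$. Extract a subsequence with $\uk\weak u$ in $H^{1}(\Om)$, $\uk\to u$ a.e., and $|\nabla(\uk-u)|^{2}\weak\mu$, $|\uk-u|^{\ts}\weak\nu$ as measures on $\bar\Om$. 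Since $|\ak-\alpha_{0}|\to 0$ and the $H^{1}$-bound on $(\uk)$ controls $I_{\alpha_{0}}(\uk)-I_{\ak}(\uk)$, the sequence $(\uk)$ is also minimizing for $I_{\alpha_{0}}$.

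The critical step is excluding $u=0$. Suppose, for contradiction, $u=0$. Then Lemma~\ref{instantao} yields~(\ref{sndd}), i.e., $\ngukpq\to\sndd$. Elliptic regularity for~\rfk\ combined with $\uk\weak 0$ forces $M_{k}:=\max_{\bar\Om}\uk\to+\infty$: otherwise uniform $C^{2}$-bounds would make $\uk\to 0$ uniformly on $\bar\Om$, contradicting $\ngukpq\to\sndd$. The remaining conclusions of Lemma~\ref{instantao} then apply, providing $y_{k}\in\partial\Om$, $\ak\ek\to 0$, and $|\nabla(\uk-U_{\ek,y_{k}})|_{2}\to 0$, so the full second-order expansion of Section~\ref{four} is available and gives
\[
I_{\ak}(\uk)\geq\sddn+2^{\frac{N-2}{N}}S^{\frac{2-N}{2}}B(N)\,\ak\ek\left[1-\frac{A(N)\,H(y_{k})}{\ak}+o(1)\right].
\]
Since $\alpha_{0}>A(N)\max_{\partial\Om}H\geq A(N)H(y_{k})$ for all large $k$, the bracket is bounded below by a positive constant, whence $I_{\ak}(\uk)>\sddn$; this contradicts $I_{\ak}(\uk)=S_{\ak}\leq S_{\alpha_{0}}=\sddn$.

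Having established $u\ne 0$, I would invoke the concentration-compactness argument from the proof of Lemma~\ref{pscondition} for the minimizing sequence $(\uk)$ of $I_{\alpha_{0}}$; the use there of the strict inequality $S_{\alpha}<\sddn$ was confined to excluding $u=0$, so the auxiliary function $h$ still has no interior minimum on $[0,1]$ and both endpoints are ruled out, forcing $\|\mu\|=0$. Hence $\uk\to u$ strongly in $H^{1}(\Om)$, $|u|_{\ts}=1$, and $I_{\alpha_{0}}(u)=\lim I_{\alpha_{0}}(\uk)=S_{\alpha_{0}}$; thus $u$ realizes $S_{\alpha_{0}}$ and $t(u)u$ is a least energy solution of~\rfz.

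The main obstacle is the concentration scenario $u=0$: one must verify that the delicate expansion of Section~\ref{four} — originally deployed for $\ak\to+\infty$ — survives the transition to a finite-limit sequence $\ak\to\alpha_{0}$, which reduces essentially to checking $M_{k}\to+\infty$ so that the finer conclusions of Lemma~\ref{instantao} become accessible. The hypothesis $\alpha_{0}>A(N)\max_{\partial\Om}H$ is placed precisely so that the leading $B(N)\ak\ek$ term in the expansion dominates the mean-curvature correction $A(N)H(y_{k})\ek$, yielding the required contradiction.
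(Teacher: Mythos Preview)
Your proposal is correct and follows the paper's own approach almost verbatim: take $\ak\nearrow\alpha_{0}$ and least energy solutions $\uk$, exclude $u=0$ by splitting into the cases $M_{k}$ bounded versus $M_{k}\to+\infty$ (the latter handled via Lemma~\ref{instantao} and the Section~\ref{four} expansion, where the hypothesis $\alpha_{0}>A(N)\max_{\partial\Om}H$ enters exactly as you say), and then finish with the concentration--compactness argument of Lemma~\ref{pscondition}.

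One small inconsistency: you normalize $|u_{k}|_{\ts}=1$ and then appeal to elliptic regularity for \rfk\ and to Lemma~\ref{instantao}, but the normalized $\uk$ no longer satisfies \rfk\ (nor the Nehari identity (\ref{eqn}) on which the proof of (\ref{sndd}) rests). The paper avoids this by taking $\uk$ to be the actual least energy solutions of \rfk\ and obtaining the $H^{1}$ bound from the computation in the proof of Lemma~\ref{infinito} rather than from~(\ref{beta}) with $|u_{k}|_{\ts}=1$; you should do the same, or else run the regularity and blow-up arguments on $t(u_{k})u_{k}$ instead of on the normalized $\uk$.
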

\begin{proof}
    Choose a sequence 
    $\ak\nearrow\alpha_{0}$.  Let $\uk$ be a minimizer of 
    $I_{\alpha_{k}}$ satisfying \rfk. As in the proof of 
    Lemma~\ref{infinito}, we conclude that the sequence $(\uk)$ is 
    bounded in $H^1(\Om)$.  So we can assume $\uk\weak u$.
    
    We claim that $u\neq 0$.  Suppose, by contradiction, that $u=0$.
    If the norms $|u_{k}|_{L^\infty(\Om)}$ are uniformly bounded, then,
    from (\ref{sndd}), $\ius=\sndd$, which contradicts $u=0$.
    If $|u_{k}|_{L^\infty(\Om)}\to+\infty$, then Lemma~\ref{instantao} 
    implies that we can repeat the argument of the previous sections 
    to conclude that $S_{\ak}>\sddn$, for large $k$.  This is also a 
    contradiction.  So $u\neq 0$.
    
    Since $u\neq 0$, the argument in the proof of 
    Lemmas~\ref{pscondition} and \ref{cont}
    yields that $u$ is a least energy solution of \rfz.
    Indeed, with the notations in the proof of Lemma~\ref{pscondition},
    $x_{0}\neq 0$.  If 
    $\left[h(1)\right]^{\frac{2}{N}}/[{4(\tz)^{\frac{2}{N}}}]
    >\sddn$, then $S_{\alpha_{0}}=
    \left[h(x_{0})\right]^{\frac{2}{N}}/[{4(\tz)^{\frac{2}{N}}}] 
    >\sddn$.  Hence 
    $I_{\alpha_{0}}(u)=
    \left[h(1)\right]^{\frac{2}{N}}/[{4(\tz)^{\frac{2}{N}}}]
    =\sddn$.
\end{proof}

\begin{remark}
    If $a$ is sufficiently small and $\Om$ is a (unit) ball, then
    the lower bound for $\alpha_{0}$ in {\rm Corollary~\ref{asndd}} is 
    smaller 
    than the lower bound for $\alpha_{0}$ in\/ {\rm Lemma~\ref{amax}} 
    so that the previous proposition applies. 
\end{remark}
\begin{proof}
    The lower bound for $\alpha_{0}$ in Corollary \ref{asndd} is
    $A(N)$, given in (\ref{aden}).  As $a\to 0$, the lower bound for 
    $\alpha_{0}$ in Lemma \ref{amax} tends to
    \bas\lb\frac{(\tz)^\frac{2}{N}}{|\Om|^\frac{2}{N}}
    \sddn\rb^\frac{1}{2}&=&
    \lb\frac{\tz}{2}\rb^\frac{1}{N}S^\frac{1}{2}
    \frac{1}{|\Om|^\frac{1}{N}}\\&=&
    {\textstyle \lb\frac{N-1}{N-2}\rb^\frac{1}{N}} 
     \frac{\pi^\frac{N+1}{2N}}{2^{\frac{N-1}{N}}}
    \frac{1}{\left[\Gamma
    \lb\frac{N+1}{2}\rb\right]^\frac{1}{N}}[N(N-2)]^\frac{1}{2} 
    \frac{\left[\Gamma
    \lb\frac{N+2}{2}\rb\right]^\frac{1}{N}}{\pi^\frac{1}{2}}
    \\&=&
    \left[\frac{\pi^\frac{1}{2}}{2^{N-1}}
    {\textstyle \lb\frac{N-1}{N-2}\rb}\frac{\Gamma
    \lb\frac{N+2}{2}\rb}{\Gamma\lb\frac{N+1}{2}
    \rb}\right]^\frac{1}{N}[N(N-2)]^\frac{1}{2}.
    \eas
\end{proof}

    Suppose now $\alpha_{0}=\azz$.  Once again, choose a sequence 
    $\ak\nearrow\alpha_{0}$ and let $\uk$ be a minimizer of 
    $I_{\alpha_{k}}$ satisfying \rfk.  The argument in the proof of 
    the previous proposition shows that, modulo a subsequence, either 
    $\uk\weak u\neq 0$, or $\uk\weak 0$ and 
    $|u_{k}|_{L^\infty(\Om)}\to+\infty$.
    {\em We have not determined which of these alternatives holds.}
    In the first case $u$ is a least energy solution of \rfz.  In the 
    second case let, as before, $P_{k}$ be such that 
    $\uk(P_{k})=|u_{k}|_{L^\infty(\Om)}$.  Any limit point of 
    $(P_{k})$ is contained in the set of points of maximum 
    mean curvature of $\partial\Om$.  For if $y_{0}$ is a limit point 
    of $P_{k}$, then
    \bas -2^{\frac{N-2}N}SH(y_{k})A(N)\ek&=&\left[
    2^{\frac{N-2}N}SH(y_{0})A(N)\ek
    -2^{\frac{N-2}N}SH(y_{k})A(N)\ek\right]\\
    &&-
    2^{\frac{N-2}N}SH(y_{0})A(N)\ek\\  
    &=&
    -2^{\frac{N-2}N}SH(y_{0})A(N)\ek+o(\ek).
    \eas
    If $H(y_{0})<\max_{\partial\Om}H$, then the argument in the 
    previous section shows that $S_{\ak}>\sddn$, for large $k$.
   
    We summarize these observations in
\begin{proposition}
    Suppose $\alpha_{0}=\azz$. Then
    \begin{enumerate}
    \item[(i)]\ either there exists a least 
    energy solution of \rfz,
    \item[(ii)]\ or any sequence, $\uk$, of least 
    energy solutions of \rfk, for $\ak<\alpha_{0}$, 
    $\ak\to\alpha_{0}$, has a subsequence,
    $\uk$, $\uk\weak 0$, 
    $|\uk|_{L^\infty(\Om)}\to+\infty$; the limit points of any sequence 
    of maximums of $\uk$ are contained in the set of points of maximum 
    mean curvature of the boundary of $\Om$.  
    \end{enumerate}
\end{proposition}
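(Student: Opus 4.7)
The plan is to run the argument of Proposition~\ref{aaz} under the weaker hypothesis $\alpha_0=\azz$ and splice in the sharp energy expansion of Section~\ref{four} to locate the concentration points in case (ii).

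Choose $\ak\nearrow\alpha_0$ with $\ak<\alpha_0$. By Corollary~\ref{coro} each problem \rfk\ admits a least energy solution $\uk$. Boundedness of $(\uk)$ in $H^1(\Om)$ follows exactly as in Lemma~\ref{infinito}, using $\beta(\uk)\leq I_{\ak}(\uk)\leq\sddn$ together with (\ref{energiad}). So, up to a subsequence, $\uk\weak u$. If $u\neq 0$, I repeat the concentration--compactness argument of Lemma~\ref{pscondition}. With the notation used there, $x_0>0$; if $\|\mu\|\neq 0$ the minimum of $h$ would be interior and would force $S_{\alpha_0}>\sddn$, contradicting the equality $S_{\alpha_0}=\sddn$ supplied by Lemma~\ref{cont} and the definition of $\alpha_0$. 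Hence $\|\mu\|=0$ and $u$ is a least energy solution of \rfz, which is alternative (i).

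If $u=0$, I first show $M_k:=|\uk|_{L^\infty(\Om)}\to+\infty$: if $(M_k)$ were bounded, dominated convergence (against $\uk\to 0$ a.e.) would give $|\uk|_{\ts}^{\ts}\to 0$, contradicting (\ref{sndd}) (available since $S_{\ak}\to S_{\alpha_0}=\sddn$ by Lemma~\ref{cont}). With $M_k\to+\infty$, Lemma~\ref{instantao} applies, so $P_k\in\partial\Om$ eventually and (\ref{sndd}), (\ref{m}), (\ref{ck}), (\ref{akek}) and (\ref{zero}) all hold. The expansion proved at the end of Section~\ref{four} then reads
\[
S_{\ak}=I_{\ak}(\uk)\geq\sddn+2^{\frac{N-2}{N}}S^{\frac{2-N}{2}}B(N)\,\ak\ek\left[1-S^{\frac{N}{2}}\frac{A(N)}{B(N)}H(y_k)\frac{1}{\ak}+o(1)\right].
\]
Using $B(N)=S^{N/2}$ and $\alpha_0=\azz=A(N)\max_{\partial\Om}H$, the bracket tends to $1-H(y_0)/\max_{\partial\Om}H$ along any subsequence with $y_k\to y_0$. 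If $H(y_0)<\max_{\partial\Om}H$, the bracket stays strictly positive for large $k$, forcing $S_{\ak}>\sddn$, which is impossible. Hence every limit point of $(P_k)$---and equivalently of $(y_k)$, since $\eps_k/\delta_k\to 1$ and both sequences converge to $P_0$---lies in the set of points of maximum boundary mean curvature. This gives alternative (ii).

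The main obstacle is verifying that the expansion of Section~\ref{four} survives the passage from $\ak\to+\infty$ to $\ak\to\alpha_0<+\infty$. Inspection of its derivation shows it never used $\ak\to\infty$ directly: the only inputs are the decomposition $\uk=C_kU_{\eps_k,y_k}+\wk$ together with the properties (\ref{sndd})--(\ref{zero}) and $\ak\ek\to 0$, and these are supplied verbatim by Lemma~\ref{instantao} in the present regime.
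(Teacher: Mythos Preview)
Your argument is correct and mirrors the paper's approach: run the dichotomy of Proposition~\ref{aaz} on a sequence $\ak\nearrow\alpha_0$, and in the blow-up alternative invoke the expansion of Section~\ref{four} together with $\alpha_0=A(N)\max_{\partial\Om}H$ to force $H(y_0)=\max_{\partial\Om}H$. One small slip of phrasing: when $\|\mu\|\neq 0$ the point is not that the minimum of $h$ is interior but rather that $x_0\in(0,1)$ is interior while $h$ has \emph{no} interior minimum (Lemma~\ref{pscondition}), so $h(x_0)>\min\{h(0),h(1)\}\geq\sddnd$ and hence $S_{\alpha_0}>\sddnd$, giving the contradiction you want.
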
    
    
\appendix		
    
\section{The functional restricted to the Nehari manifold}

In this Appendix we start by checking, using standard arguments, 
that the Nehari set 
${\cal{N}}$ is a manifold and a natural constraint for 
$\Phi_{\alpha}$ (defined in (\ref{phi})).  We then derive the 
expressions (\ref{psiu}) and (\ref{psid}) for $\Phi_{\alpha}$ 
restricted to ${\cal{N}}$, we derive an expression for $I_{\alpha}$ 
(defined in
(\ref{defi})) equivalent to 
(\ref{energiad}) and to (\ref{iau}), and we derive 
upper and lower bounds for $I_{\alpha}$.

Consider the set
$${\cal{N}}:=\left\{u\in H^1(\Om):
\Phi_{\alpha}^\prime(u)u=0, u\neq 0\right\},$$
where $\Phi_{\alpha}$ is the $C^2$ functional defined in (\ref{phi}),
and define $J_{\alpha}:H^1(\Om)\to\Rb$ by
$$
J_{\alpha}(u):=\Phi_{\alpha}^\prime(u)u=\nhu+\alpha\iuz-\ius.
$$
The set ${\cal{N}}=\left\{u\in H^1(\Om):J_{\alpha}(u)=0, u\neq 0\right\},$
is a manifold (called the Nehari manifold).  
Indeed, if $u\in {\cal{N}}$, then
$J_{\alpha}^\prime(u)\neq 0$, because if $J_{\alpha}(u)=0$ and
$J_{\alpha}^\prime(u)u=0$, then
$$
0=\ts J_{\alpha}(u)-J_{\alpha}^\prime(u)u=(\ts-2)\pnhu+(\ts-\tz)\alpha\iuz.
$$
This yields $u=0$.
Furthermore, the Nehari manifold is a natural constraint for
$\Phi_{\alpha}$, by which we mean that any critical point of 
$\Phi_{\alpha}|_{{\cal{N}}}$ 
is a critical point of $\Phi_{\alpha}$.  In fact, suppose that 
$u\in {\cal{N}}$ is a critical point of $\Phi_{\alpha}|_{{\cal{N}}}$.  
Then there 
exists a $\lambda\in\Rb$ such that
$\Phi_{\alpha}^\prime(u)=\lambda J_{\alpha}^\prime(u)$.  Applying both 
sides to $u$,
$0=J_{\alpha}(u)=\Phi_{\alpha}^\prime(u)u=\lambda J_{\alpha}^\prime(u)u$. 
However, we just 
saw that $J_{\alpha}^\prime(u)u\neq 0$ if $J_{\alpha}(u)=0$ (and $u\neq 0$).  
It follows that
$\lambda=0$ and $u$ is a critical point of $\Phi_{\alpha}$.

For any $u\in H^1(\Om)\setminus\{0\}$ there exists a unique $t(u)>0$ 
such that $t(u)u\in {\cal{N}}$, i.e.\ $\Phi_{\alpha}^\prime(t(u)u)t(u)u=0$.  
The value of $t(u)$ is the solution of
$$
\nhu+\alpha\iuz [t(u)]^{\tz-2}-\ius [t(u)]^{\ts-2}=0.
$$
Since $\tz-2=\frac 2{N-2}$ is half of $\ts-2$, the equation
$$
\aaa+bt^{\tz-2}-ct^{\ts-2}=0.
$$
is 
quadratic in $t^\frac 2{N-2}$. 
Define the functionals $\aaa$, $b$ and $c:H^1(\Om)\setminus\{0\}\to
\Rb$ by
$$\begin{array}{rrlll}
\aaa(u)&:=&\nhu,&&\\
b(u)&:=&\alpha\iuz&=&b_{\alpha}(u),\\
c(u)&:=&\ius.&&
\end{array}$$
(Note that $\aaa\neq a$.)
The value of $t(u)$ is
\be\label{t}
t(u)=
\left(\frac{b+\sqrt{b^2+4\aaa c}}{2c}\right)^\frac{N-2}{2}(u).
\ee
The functional $t:H^1(\Om)\setminus\{0\}\to\Rb$ is obviously 
continuous and the map $u\mapsto t(u)u$ defines a homeomorphism of the 
unit sphere in $H^1(\Om)$ with ${\cal{N}}$.  Its inverse is the retraction
$u\mapsto\frac u{||u||}$.

We define $\Psi_{\alpha}:H^1(\Om)\setminus\{0\}\to\Rb$ by
$$\Psi_{\alpha}(u):=\Phi_{\alpha}(t(u)u).$$
In terms of $\aaa$, $b$, $c$ and $t$, 
$$
\Psi_{\alpha}=\frac 12\aaa t^2+\frac 1\tz bt^\tz-\frac 1\ts ct^\ts.
$$
Replacing (\ref{t}) into this expression for $\Psi_{\alpha}$, and 
simplifying, leads to 
$$
\Psi_{\alpha}=\frac 1N\frac 1\tz\left[
\left(\frac{b+\sqrt{b^2+4\aaa c}}{2c}\right)^Nc+\frac{\ts}{2\;}
\left(\frac{b+\sqrt{b^2+4\aaa c}}{2c}\right)^{N-2}\aaa
\right].
$$
We now introduce the functionals $\beta$, 
$\gamma:H^1(\Om)\setminus\{0\}\to\Rb$, defined by
$$
\beta:=\frac \aaa{c^\frac{N-2}N}
$$
and
$$
\gamma=\gamma_{\alpha}:=\frac b{c^\frac{N-1}N},
$$
as in expressions (\ref{defb}) and (\ref{defg}), respectively.  In terms of 
$\beta$ and $\gamma$, the expression for $\Psi_{\alpha}$ is
$$\Psi_{\alpha}=\frac 1N\frac 1\tz\frac 1{2^{N}}
         \left[\left(\gb\right)^{N}+2\cdot\ts\beta\left(\gb\right)^{N-2}
         \right].
$$
This is (\ref{psiu}).  If we introduce still another functional 
$\delta:H^1(\Om)\setminus\{0\}\to\Rb$, defined by
$$
\delta=\delta_{\alpha}:=\frac{\gamma}{2\sqrt\beta},
$$
as in expression (\ref{delta}), then we can write $\Psi_{\alpha}$ as
$$
\Psi_{\alpha}=\frac 1N\frac{\beta^\frac N2}\tz
         \left[\left(\delta+\sqrt{\delta^2+1}\right)^{N}+
         \frac\ts{2\;}\left(\delta+\sqrt{\delta^2+1}\right)^{N-2}
         \right].
$$
This is (\ref{psid}).

We give an expression for $I_{\alpha}=
\left(N\Psi_{\alpha}\right)^\frac 2N$, defined in
(\ref{defi}), equivalent to  
(\ref{energiad}) and to (\ref{iau}):
$$
I_{\alpha}=\beta\left(
\delta+\sqrt{\delta^2+1}
\right)^\frac 4\ts
\left(
\frac 2\tz\delta^2+\frac 2\tz\delta\sqrt{\delta^2+1}+1
\right)^\frac 2N.
$$
Since 
$$
\frac 4\ts+\frac 2N\frac 2\tz=\frac 4\tz,
$$
$I_{\alpha}$ has the lower bound
\be\label{lbd}
I_{\alpha}\geq \beta\left(1+\frac 4\tz\delta\right).
\ee
For an upper bound for $I_{\alpha}$ we refer to Lemma~\ref{lemmau}.

\section{The estimate for $\iukz$}

In this Appendix we use the ideas of Adimurthi and Mancini~\cite{AM} 
to prove (\ref{iukz}).

We wish to estimate $|U_{\eps,y}|_{\tz}^\tz$, where 
$U_{\eps,y}$ is defined in (\ref{resins}) and $y\in\partial\Omega$.
By a change of coordinates we can assume that $y=0$,
$$
B_{R}(0)\cap\Om=\{(x',x_{N})\in B_{R}(0)|x_{N}>\rho(x')\}
$$
and
$$
B_{R}(0)\cap\partial\Om=\{(x',x_{N})\in B_{R}(0)|x_{N}=\rho(x')\},
$$
for some $R>0$, where $x'=(x_{1},\ldots,x_{N-1})$,
$$\rho(x')=\sum_{i=1}^{N-1}\lambda_{i}x_{i}^2+O(|x'|^3),$$
$\lambda_{i}\in\Rb$, $1\leq i\leq N-1$.  

We begin by supposing 
all the $\lambda_{i}$'s are positive.
Let 
$U_{\eps}:=U_{\eps,0}$ and
$\Sigma:=\{(x',x_{N})\in B_{R}(0)|0<x_{N}<\rho(x')\}$.
Then
\be\label{sumudt}
|U_{\eps}|_{\tz}^\tz=
\frac 12\int_{B_{R}(0)}U_{\eps}^\tz-
\int_{\Sigma}U_{\eps}^\tz+\int_{B_{R}^{C}(0)\cap\Om}U_{\eps}^\tz.
\ee

We will estimate each of the three terms on the right hand side of
(\ref{sumudt}).  For the third term we have
\bas
\int_{B_{R}^{C}(0)\cap\Om}U_{\eps}^\tz&\leq&
\int_{B_{R}^{C}(0)}U_{\eps}^\tz\\
&=&O\left(\eps\int_{R/\eps}^{+\infty}
\frac{r^{N-1}}{(1+r^2)^{N-1}}dr\right)\\
&=&O(\eps\times\eps^{N-2})\\
&=&O(\eps^{N-1})
\eas

Using this estimate, for the first term on the 
right hand side of (\ref{sumudt}) 
we have
\bas
\frac 12\int_{B_{R}(0)}U_{\eps}^\tz&=&\frac 
12\int_{\Rb^{N}}U_{\eps}^\tz+O(\eps^{N-1})\\
&=&\frac{1}{2}\eps\int_{\Rb^{N}}U^\tz+O(\eps^{N-1})\\
&=&\frac{\tz}{2}B(N)\eps +O(\eps^{N-1}),
\eas
with
\bas
B(N)&:=&\frac 1\tz\int_{\Rb^{N}}U^\tz\\
&=&\frac 1\tz\,\omega_{N}\int_{0}^{+\infty}
\frac{r^{N-1}}{(1+r^2)^{N-1}}\,dr\times[N(N-2)]^\frac{N}{2}\\
&=&{\textstyle \frac{N-2}{2(N-1)}}\,
\omega_{N}\times
\frac{1}{2^{N-1}}\sqrt{\pi}\,
\frac{\Gamma\left(\frac{N-2}{2}\right)}{\Gamma\left(\frac{N-1}{2}\right)}
\times[N(N-2)]^\frac{N}{2}\\
&=&\omega_{N}\frac{1}{2^{N}}\sqrt{\pi}\,
\frac{\Gamma\left(\frac{N}{2}\right)}{\Gamma\left(\frac{N+1}{2}\right)}
[N(N-2)]^\frac{N}{2};
\eas
here $\omega_{N}$
is the volume of the $N-1$ dimensional unit sphere.

So we are left with the estimate of the second term on the right hand 
side of (\ref{sumudt}).  Let $\sigma>0$ be such that
$$
L_{\sigma}:=\{x\in\Rb^{N}|\,|x_{i}|<\sigma, 1\leq i\leq N\}\subset 
B_{\frac{R}{4}}(0)
$$
and define
$$
\Delta_{\sigma}:=\{x'|\,|x_{i}|<\sigma,1\leq i\leq N-1\}.
$$
For the second term on the right hand side of (\ref{sumudt}),
\bas
   \int_{\Sigma}U_{\eps}^\tz&=&\int_{\Sigma\cap 
   L_{\sigma}}U_{\eps}^\tz+O(\eps^{N-1})\\
   &=&\int_{\Delta_{\sigma}}\int_{0}^{\rho(x')}U_{\eps}^\tz
   dx_{N}\,dx'+O(\eps^{N-1})\\
   &=&O\left(\int_{\Delta_{\sigma}}\int_{0}^{\rho(x')}
   \frac{\eps^{N-1}}{(\eps^2+|x|^2)^{N-1}}\,
   dx_{N}\,dx'\right)+O(\eps^{N-1});\\
\noalign{\noindent using the change of variables 
$\sqrt{\eps^2+|x'|^2}\,y_{N}=x_{N}$,}
   &=&O\left(\int_{\Delta_{\sigma}}
   \frac{\eps^{N-1}}{(\eps^2+|x'|^2)^{N-{\frac{3}{2}}}}   
   \int_{0}^{\frac{\rho(x')}{\sqrt{\eps^2+|x'|^2}}}
   \frac{1}{(1+y_{N}^2)^{N-1}}\,
   dy_{N}\,dx'\right)\\
   &&+\ O(\eps^{N-1});\\
\noalign{\noindent since $\int_{0}^s\frac{1}{(1+t^2)^{N-1}}\,dt=s+O(s^3)$,}
    &=&O\left(\eps^{N-1}\int_{\Delta_{\sigma}}
   \frac{{\textstyle\sum\lambda_{i}x_{i}^2}}{(\eps^2+|x'|^2)^{N-1}}  
   \,dx'\right)\\
   &&+\ O\left(\eps^{N-1}\int_{\Delta_{\sigma}}
   \frac{|x'|^3}{(\eps^2+|x'|^2)^{N-1}}  
   \,dx'\right)\\
   &&+\ O(\eps^{N-1})\\
   &=&O\left(\eps^{2}\int_{\Delta_{\sigma}/\eps}
   \frac{|y'|^2}{(1+|y'|^2)^{N-1}}  
   \,dy'\right)\\
   &&+\ O\left(\eps^{3}\int_{\Delta_{\sigma}/\eps}
   \frac{|y'|^3}{(1+|y'|^2)^{N-1}}  
   \,dy'\right)\\
   &&+\ O(\eps^{N-1})\\
   &=&O(\eps^2).
\eas

Combining the estimates for the three terms on the right hand side 
of (\ref{sumudt}),
\be\label{quaqua}
|U_{\eps}|_{\tz}^\tz=\frac{\tz}{2}B(N)\eps+O(\eps^2),
\ee
if all the $\lambda_{i}$'s are positive.  If all the $\lambda_{i}$'s 
are negative, then the minus sign on the right hand side of (\ref{sumudt})
turns into a plus sign, and (\ref{quaqua}) follows.  From these two 
cases we deduce that (\ref{quaqua}) holds no matter what the 
sign of the $\lambda_{i}$'s is.

\vspace{4mm}

\noindent {\bf Acknowledgments.}
This work was initiated during the visit of the first 
author to Instituto Superior T\'{e}cnico, 
whose kind hospitality and support he gratefully acknowledges.

\end{document}